\numberwithin{equation}{section}
\newtheorem{Theorem}[equation]{Theorem}
\newtheorem{Proposition}[equation]{Proposition} 
\newtheorem{Lemma}[equation]{Lemma}
\newtheorem{Conjecture}[equation]{Conjecture}
\theoremstyle{definition}
\newtheorem{Remark}[equation]{Remark}
\newtheorem{Example}[equation]{Example}
\newtheorem{Definition}[equation]{Definition}
\numberwithin{figure}{section}
\newcommand{\bC}{\mathbb{C}}
\newcommand{\bD}{\mathbb{D}}
\newcommand{\bN}{\mathbb{N}}
\newcommand{\bR}{\mathbb{R}}
\newcommand{\bZ}{\mathbb{Z}}
\newcommand{\cE}{\mathcal{E}}
\newcommand{\cF}{\mathcal{F}}
\newcommand{\cG}{\mathcal{G}}
\newcommand{\cH}{\mathcal{H}}
\newcommand{\cQ}{\mathcal{Q}}
\newcommand{\cW}{\mathcal{W}}
\newcommand{\sW}{\mathscr{W}}
\newcommand{\al}{\alpha}
\newcommand{\be}{\beta}
\newcommand{\ep}{\epsilon}
\newcommand{\ga}{\gamma}
\newcommand{\ol}{\overline}
\newcommand{\into}{\hookrightarrow}
\newcommand{\congto}{\xrightarrow{\sim}}
\newcommand{\id}{{id}}
\newcommand{\pt}{\mathrm{pt}}
\newcommand{\op}{\mathrm{op}}
\newcommand{\sgn}{\mathrm{sgn}}
\DeclareMathOperator{\Hom}{Hom}
\DeclareMathOperator{\End}{End}
\DeclareMathOperator{\Ext}{Ext}
\DeclareMathOperator{\Arg}{Arg}
\DeclareMathOperator{\Log}{Log}
\newcommand{\Coh}{\mathrm{Coh}}
\newcommand{\conv}{{\mathbin{\scalebox{1.1}{$\mspace{1.5mu}*\mspace{1.5mu}$}}}}
\newcommand{\Int}{\mathrm{Int}}
\newcommand{\Loc}{\mathrm{Loc}}
\newcommand{\Mod}{\mathrm{Mod}}
\newcommand{\Rep}{\mathrm{Rep}}
\newcommand{\Sh}{\mathrm{Sh}}
\renewcommand{\wr}{W}
\newcommand{\wrneg}{W^-}
\newcommand{\incl}{\iota}
\newcommand{\inclarg}[1]{\iota_{{#1}}}
\newcommand{\pd}{\mathrm{pd}}
\newcommand{\supp}{\mathrm{supp}}
\newcommand{\opint}{(0,1)}
\newcommand{\clint}{[0,1)}
\newcommand{\Exp}{E}
\DeclareMathOperator{\cHom}{\mathscr{H}\text{\kern -3pt {\calligra\large om}}\,}
\DeclareFontFamily{U}{mathx}{\hyphenchar\font45}
\DeclareFontShape{U}{mathx}{m}{n}{
	<5> <6> <7> <8> <9> <10>
	<10.95> <12> <14.4> <17.28> <20.74> <24.88>
	mathx10
}{}
\DeclareSymbolFont{mathx}{U}{mathx}{m}{n}
\DeclareMathAccent{\widecheck}{0}{mathx}{"71}
\DeclareMathSymbol{\shortminus}{\mathbin}{AMSa}{"39}
\newcommand{\arrtip}{latex'}
\begin{document}
\title{Tropical Lagrangian coamoebae and free resolutions}

\author[Christopher Kuo]{Christopher Kuo}
\address[Christopher Kuo]{University of Southern California \\ Los Angeles CA, USA}
\email{chrislpkuo@berkeley.edu}

\author[Harold Williams]{Harold Williams}
\address[Harold Williams]{University of Southern California \\ Los Angeles CA, USA}
\email{hwilliams@usc.edu}

\begin{abstract}
	We study the coamoebae of Lagrangian submanifolds of $(\mathbb{C}^\times)^n$, specifically how the combinatorics of their degenerations encodes the homological algebra of mirror coherent sheaves. Concretely, to a minimal free resolution $F^\bullet$ of a module $M$ over $\mathbb{C}[z_1^{\pm 1}, \dotsc, z_n^{\pm 1}]$ we associate a simplicial complex $T(F^\bullet) \subset T^n$. We call $T(F^\bullet)$ a tropical Lagrangian coamoeba. We show that the discrete information in $F^\bullet$ can often be recovered from $T(F^\bullet)$, and that more generally $M$ is mirror to a certain constructible sheaf supported on~$T(F^\bullet)$. The resulting interplay between coherent sheaves on~$(\mathbb{C}^\times)^n$ and simplicial complexes in $T^n$ provides a higher-dimensional generalization of the spectral theory of dimer models in~$T^2$, as well as a symplectic counterpart to the theory of brane brick models. 
\end{abstract}

\dedicatory{To Vladimir Fock on the occasion of his 60th birthday.}

\maketitle

\setcounter{tocdepth}{1}

\tableofcontents

\section{Introduction}
\thispagestyle{empty}

Given a subvariety $Z \subset (\bC^\times)^n$, we may consider its images under the projections
\begin{align*}
	\Log: (\bC^\times)^n \to \bR^n, &\quad (z_1, \dotsc, z_n) \mapsto (\log |z_1|, \dotsc, \log |z_n|), \\
	\Arg: (\bC^\times)^n \to T^n, &\quad  (z_1, \dotsc, z_n) \mapsto (\Arg (z_1), \dotsc, \Arg (z_n)). 
\end{align*}
These are respectively referred to as its amoeba $A(Z)$ \cite{GKZ94} and coamoeba (or alga) $C(Z)$ \cite{Pas04,FHKV08}. Their geometric, analytic, and combinatorial properties have proven to be of interest from a wide range of perspectives. 

One can also study the amoebae and coamoebae of other subsets of $(\bC^\times)^n$, in particular Lagrangian submanifolds. Typically, one considers Lagrangians up to an equivalence such as Hamiltonian isotopy. Many different amoebae and coamoebae appear as we consider all Lagrangians in an equivalence class, and they can be arbitrarily large in general: if $L$ is noncompact, a large enough Hamiltonian isotopy will make its coamoeba all of $T^n$. But they cannot be arbitrarily small, and interesting combinatorics emerges when we consider Lagrangian degenerations whose amoebae or coamoebae become in some sense minimal. 

In the setting of amoebae, this combinatorics is that of tropical varieties. A tropical variety in $\bR^n$ can often be realized as the limit of the amoebae of a degenerating family of Lagrangians, and the questions of whether and how a given tropical variety can be realized this way has been of active interest \cite{Mik19,Hic19,MR20,Mat21,Hic22,Han24}. This interplay between symplectic and tropical geometry is rooted in mirror symmetry: roughly speaking, Lagrangian realizations of tropical varieties should be mirror to coherent sheaves which have the same tropicalization. 

In this paper we introduce and study a parallel class of objects which we call tropical Lagrangian coamoebae. See Figure \ref{fig:introexample} for an example. Our consideration of them is also rooted in mirror symmetry: a tropical Lagrangian coamoeba is a certain geometric representation of a free resolution of a mirror coherent sheaf. A succinct characterization of this construction, and a separate source of motivation, is that in a sense it generalizes the spectral theory of dimer models to accommodate arbitrary coherent sheaves. In particular, a special case of the relationship between a tropical Lagrangian coamoeba and the corresponding free resolution is the relationship between a bipartite graph and its Kasteleyn matrix. 

\begin{figure}
	\centering
	\begin{tikzpicture}
		\node (img1) at (-.25,0) {\includegraphics[width=10cm]{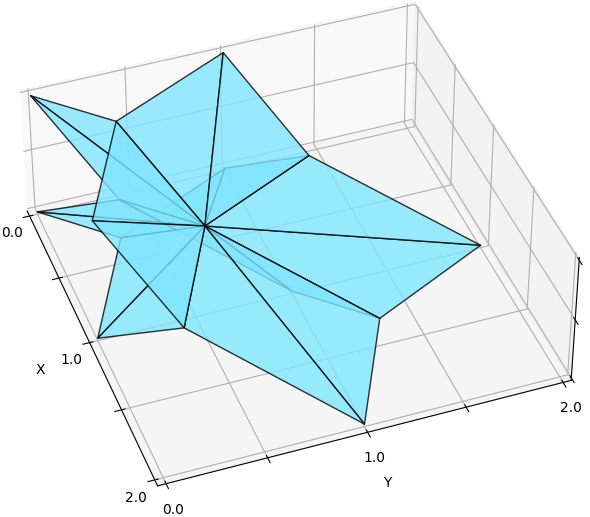}};  
		
		\node (img2) at (8.25,0) {\includegraphics[width=5.5cm]{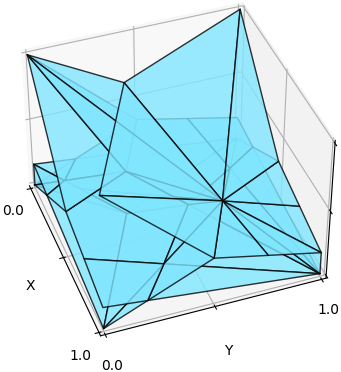}};  
		
		\draw[-stealth', thick] (img1.east) -- node[above] {} (img2.west);
	\end{tikzpicture}
	\caption{The ``origami'' on the right is a tropical Lagrangian coamoeba, pictured via its lift from $T^3$ to $[0,1]^3$. It is mirror to the Koszul resolution of the curve in $(\bC^\times)^3$ defined by $1 + xy + z = 0$ and $1 + x + y = 0$, and is ``folded'' out of the simplicial complex on the left; see Example \ref{ex:introexample} for details.}
	\label{fig:introexample}
\end{figure}

\subsection{Coamoeba from resolutions} Let $F^\bullet$ be a minimal-length free resolution of a finitely generated $R_n$-module $M$, where $R_n := \bC[z_1^{\pm 1},  \dotsc, z_n^{\pm 1}]$. We assume a homogeneous basis is chosen for $F^\bullet$ and a point $x_i \in \bR^n$ is chosen for each basis element. In Section \ref{sec:restoco} we associate to this data a simplicial complex $X(F^\bullet)$ with a map to $T^n$, and define the tropical Lagrangian coamoeba $T(F^\bullet)$ as its image. In fact, these constructions make sense for more general $F^\bullet$, but we suppress this in the introduction. 

The objects $X(F^\bullet)$ and $T(F^\bullet)$ are determined by the discrete information in $F^\bullet$, and our first results show that this information can often be recovered from them. More precisely, note that $F^\bullet$ and its basis are encoded by the sequence of $R_n$-valued matrices representing its differentials. By the discrete information in $F^\bullet$ we mean the sizes of these matrices and the exponents appearing in each entry --- but not the coefficients. In Theorem~\ref{thm:recovery} we show this data can be recovered from $X(F^\bullet)$, its map to $T^n$, and the degrees of its vertices (which correspond to basis elements of $F^\bullet$). We also show that under mild hypotheses $X(F^\bullet)$ is immersed (Proposition~\ref{prop:immersedsimpset}), and is determined by $T(F^\bullet)$ and the images of its vertices (Theorem \ref{thm:recovery2}). 

Our next results characterize the simplicial complexes that arise from this construction. A motivating case is when $F^\bullet$ has two terms and the resulting $T(F^\bullet)$ is a graph. The degrees of its vertices provide a bipartite coloring, and any bipartite graph with linear edges arises this way for some $F^\bullet$. More generally, in Proposition~\ref{prop:characterization} we show that if the 1-skeleton of a $k$-dimensional simplicial complex $X$ mapping to $T^n$ admits a vertex coloring by the degrees $\{\shortminus k, \dotsc, \shortminus 1, 0\}$, and if $X$ is then in a certain sense generated by the edges which increase degree by one, then $X$ is of the form $X(F^\bullet)$ for some sequence of free modules $F^\bullet$. 

\subsection{Mirror complexes} Our next claim is that in a suitable sense $T(F^\bullet)$ is the coamoeba of a degenerate Lagrangian brane which is mirror to $M$. We write $\Coh((\bC^\times)^n)$ for the (derived) category of coherent sheaves on $(\bC^\times)^n$, or equivalently of finitely generated $R_n$-modules. Recall that mirror symmetry identifies this with $\sW(T^* T^n)$, the wrapped Fukaya category of $T^* T^n$. The basic objects of $\sW(T^* T^n)$ are Lagrangian branes, which here will mean eventually conical exact Lagrangians equipped with certain additional data. 
 
Lagrangian branes in a cotangent bundle $T^* X$ are closely related to sheaves on the base $X$ \cite{NZ09,GPS18}, a principle exploited extensively in the context of mirror symmetry \cite{Bon06,FLTZ11,SS16,STZ14,Nad19,Zho19,Kuw20,GN20,GS22}. A robust heuristic that emerges is that a constructible sheaf $\cF$ on $X$ can be thought of as a degenerate Lagrangian brane supported on its singular support $ss(\cF) \subset T^* X$. Recall that $ss(\cF)$ is a singular conical Lagrangian that measures the deviation of $\cF$ from being locally constant~\cite{KS94}. 

In particular, there is a functor from the (derived) category $\Sh^b(X)$ of constructible sheaves to $\sW(T^* X)$. First, we apply the left adjoint $W: \Sh(X) \to \Loc(X)$ of the inclusion $\Loc(X) \subset \Sh(X)$ of local systems into all sheaves. This takes constructible sheaves to the compact subcategory $\Loc^c(X) \subset \Loc(X)$ \cite{Kuo23}, which is in turn equivalent to $\sW(T^* X)$ \cite{Abo11}. Conversely, a Lagrangian brane $L$ may be quantized to a sheaf $Q(L)$ whose singular support is contained in the conical Lagrangian $\lim_{t \to 0} t L$ \cite{Gui16,JT17}. Heuristically, $Q(L)$ is the degeneration of the branes $t L$ as $t \to 0$, and indeed $L \cong W Q(L)$ in $\sW(T^* X)$.  Note that the projection of $ss(\cF)$ to $X$ is the ordinary support of $\cF$, hence the support of a sheaf is the analogue of the coamoeba of a Lagrangian brane. 

With this context in mind, our precise claim (Theorem~\ref{thm:mirrorcomplex}) is that there exists a constructible sheaf which is supported on $T(F^\bullet)$, and which becomes isomorphic to $M$ under 
$$ \Sh^b(T^n) \xrightarrow{\wr} \Loc^c(T^n) \congto \Coh((\bC^\times)^n),$$ 
where on the right we take a local system to its monodromy representation. 
This sheaf is represented by a complex $C^\bullet(F^\bullet)$ which is a sum of sheaves of the form $\pi_* \bC_{S_i}$. 
Here $\pi: \bR^n \to T^n$ is the projection, and the $S_i \subset \bR^n$ are certain contractible simplicial complexes labeled by the basis of $F^{\bullet}$. 
If $i$ labels a basis element in degree $\shortminus k$, then $S_i$ is at most $k$-dimensional and depends only on the truncation of $F^\bullet$ to degrees $\geq \shortminus k$. Non-isomorphic resolutions of $M$ lead to mirror complexes which are not even quasi-isomorphic, so (unlike $F^\bullet$) the isomorphism class of $C^\bullet(F^\bullet)$ in the derived category retains all information of interest; see Figure \ref{fig:introdiagram} for a schematic. 

The principle of this construction is the following. If the differentials in $F^\bullet$ were zero it would be mirror to a direct sum of skyscraper sheaves in different degrees. To realize the differentials as morphisms of sheaves, we must ``stretch out'' the supports of these skyscrapers so they can interact. Working out the minimal amount of stretching required leads to an inductive characterization the $S_i$ --- and thus of $T(F^\bullet)$, which is recovered as the union of their projections to $T^n$ --- together with the desired sheaves (Proposition~\ref{prop:SiofF}). 

\begin{figure}
	\begin{tikzpicture}
		[baseline=(current  bounding  box.center),thick,>=\arrtip]
		\node (a) at (0,0) {$\mathrm{Ch}^b(\mathrm{Free}_{R_n}^{fg})$};
		\node (b) at (4.5,0) {$\Sh^b(T^n)$};
		\node (c) at (0,-1.5) {$\Coh((\bC^\times)^n)$};
		\node (d) at (4.5,-1.5) {$\Loc^c(T^n)$};
		\draw[->,dashed] (a) to node[above] {$C^\bullet(-)$} (b);
		\draw[->] (b) to node[right,pos=.4] {$\wr$} (d);
		\draw[->] (a) to node[left] {$ $}(c);
		\draw[->] (c) to node[above] {$\sim$} (d);
	\end{tikzpicture}
	\caption{
		The (derived) categories of local systems and coherent sheaves are localizations of the categories of constructible sheaves and bounded complexes of finitely generated free modules, respectively. The mirror equivalence on bottom is lifted by the assignment $F^\bullet \mapsto C^\bullet(F^\bullet)$ to a function between objects of these finer categories.}
	\label{fig:introdiagram}
\end{figure}

\subsection{Realizability}
Next we consider whether $T(F^\bullet)$ can be realized as a degeneration of coamoebae of smooth Lagrangian branes mirror to $M$. This question is analogous to that of the Lagrangian realizability of tropical varieties mentioned earlier. In Theorem \ref{thm:hypersmoothing} we show such a realization exists when $F^\bullet$ is a two-term complex and $T(F^\bullet)$ is embedded, and that moreover the sheaf quantizations of these branes converge to $C^\bullet(F^\bullet)$ in the relevant sense. 

The Lagrangians appearing in this result include a number of examples in the existing literature \cite{STWZ19,Mat21,Hic19}. When $n=2$ their sheaf quantizations were termed alternating sheaves in \cite{STWZ19}.  In Section \ref{sec:branes} we extend this terminology and a number of results of \cite{STWZ19} to higher dimensions. We then consider degenerations of alternating sheaves in which their support retracts onto a bipartite graph. In Proposition \ref{prop:limitaltsheafquant} we show that the limit (i.e. real nearby cycles) of such a degeneration is a ``reflected local system'', a certain constructible sheaf related to a graph local system by reflection functors. This provides a key ingredient in the proof of Theorem \ref{thm:hypersmoothing}, as we also show that in the two-term case $C^\bullet(F^\bullet)$ is quasi-isomorphic to a reflected local system in degree $\shortminus 1$. 

It is less clear what predictions to make in general. While we assume $T(F^\bullet)$ can always be realized as a degeneration of coamoebae of Lagrangian branes less singular than the stratified conormal bundle $N^* T(F^\bullet)$, it is evident that some singular behavior must be allowed. Indeed, typically $T(F^\bullet)$ is itself only immersed in the relevant sense. In this case we can expect at best to realize it as a degeneration of coamoebae of immersed Lagrangians. 

We can imagine that there exists a construction which begins by approximating the $S_i$ with coamoebae of Hamiltonian isotopies of cotangent fibers, out of which we can then assemble a partial smoothing of $N^* T(F^\bullet)$. For example, the embedded Lagrangians we consider in the two-term case are, up to some modifications near infinity, the result of performing surgeries at isolated intersection points of isotoped cotangent fibers. But already once $T(F^\bullet)$ is two-dimensional, as in Figure~\ref{fig:introexample}, we do not know how to formulate such a prescription. As a special case, such partial smoothings should include the singular Lagrangian of \cite{Han24}, discussed below in Example \ref{ex:introlineexample}. 

\subsection{Minimality}

We have implicitly suggested that the tropical Lagrangian coamoebae associated to minimal-length free resolutions of $M$ are in some sense minimal among coamoebae of mirrors to $M$ (whether in the form of constructible sheaves or Lagrangian branes). To make this precise we propose the following. 
\begin{Conjecture}\label{conj:minimality}
If a constructible sheaf $\cF \in \Sh^b(T^n)$ is mirror to an $R_n$-module~$M$, then the dimension of the support of $\cF$ is at least the projective dimension of $M$. 
\end{Conjecture}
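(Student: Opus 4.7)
The plan is to produce a free resolution of $M$ of length at most $d := \dim \supp \cF$ by translating a cellular resolution of $\cF$ through the mirror equivalence. First, I would refine the stratification of $\cF$ (barycentrically subdividing if necessary) so that $\cF$ is a cellular sheaf with respect to a regular CW structure on $T^n$, with $\supp \cF$ contained in a subcomplex $X \subset T^n$ of dimension $d$, and with each closed cell $\bar\sigma \subset X$ lifting contractibly to $\bR^n$ under the universal cover $\pi: \bR^n \to T^n$.

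Next, cellular sheaves on $X$ form an abelian category equivalent to representations of the face poset $P_X$ (with $\tau \leq \sigma$ iff $\tau \subseteq \bar\sigma$), whose projective generators are the representables $h_\sigma \cong \bC_{\bar\sigma}$. Since the longest chain in $P_X$ has $d+1$ elements, the incidence algebra of $P_X$ has global dimension $d$, so $\cF|_X$ admits a projective resolution
\[
0 \to P^{-d} \to \cdots \to P^{-1} \to P^0 \to \cF|_X \to 0, \qquad P^{-i} = \bigoplus_{\sigma} \bC_{\bar\sigma},
\]
indexed by cells $\sigma \in X$. Pushing forward along the exact closed inclusion $X \into T^n$ yields a resolution of $\cF$ in $\Sh^b(T^n)$ of the same form.

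The key mirror computation is that for any closed contractible $\bar\sigma \subset T^n$ with contractible lift, the adjunction for the closed inclusion combined with a local cohomology calculation gives
\[
R\Hom_\Sh(\bC_{\bar\sigma}, \cL) \simeq R\Gamma_{\bar\sigma}(T^n, \cL) \simeq \cL_x[-n]
\]
for any rank-one local system $\cL$ and any $x \in \bar\sigma$, using that $T^n \setminus \bar\sigma$ is homotopy equivalent to $T^n \setminus \{x\}$. The functor $\cL \mapsto \cL_x[-n]$ is represented in $\Loc^c(T^n)$ by the shifted regular module $R_n[n]$, so $\wr(\bC_{\bar\sigma}) \simeq R_n[n]$. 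Applying $\wr$ term-by-term to the cellular resolution produces a complex of shifted free $R_n$-modules of length at most $d$, quasi-isomorphic to $\wr(\cF) \simeq M[n]$; after reindexing by $[-n]$ this is a genuine free resolution of $M$ of length at most $d$, giving $\pd_{R_n}(M) \leq d$.

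The hardest step will be the cellular-resolution argument together with the bookkeeping of shifts. The fact that representations of a finite poset have global dimension equal to the length of the longest chain is standard, but one must verify that the quasi-isomorphism in cellular sheaves on $X$ descends, under pushforward and $\wr$, to an honest free resolution of $M$. A related subtlety is that if $\cF$ has cohomology in multiple degrees, the naive argument gives a complex of length $d + \mathrm{amplitude}$; here one must use the assumption that $\cF$ is mirror to a single module in degree zero to see that higher cohomology sheaves of $\cF$ contribute no additional length.
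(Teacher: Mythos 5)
The statement you're addressing is stated in the paper as Conjecture 1.1 and is \emph{not} proved there; the paper's evidence is Theorem 5.1, which establishes the conjecture only under the extra hypothesis that $\cF$ is concentrated in a single cohomological degree. You flag the multi-degree issue yourself at the end, so let us compare your approach to the paper's proof of Theorem 5.1, which is where the substance lies.

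Your strategy is genuinely different from the paper's. The paper does not build a resolution of $\cF$ at all; it filters $\cF$ by cell dimension, setting $\cF_{\geq k} = j_{k!}j_k^!\cF$ for $j_k$ the inclusion of the open strata of dimension $\geq k$. The graded pieces are sums of extension-by-zero constant sheaves on open cells, each of which is mirror to $R_n[-k]$; the bound on $\pd(M)$ then falls out of the long exact sequences of $\Ext$ groups applied to the induced filtration of $M$. You instead want a projective resolution of $\cF$ in the abelian category of cellular sheaves on $X$, and to apply $\wr$ termwise. This is a reasonable-sounding plan, but it runs into two concrete errors.

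First, the projective generators of the category of cellular sheaves on $X$ (covariant functors on the face poset $P_X$ with the order $\tau \leq \sigma$ iff $\tau \subseteq \bar\sigma$) are \emph{not} the closed-cell sheaves $\bC_{\bar\sigma}$. The representable $h_\sigma(\tau) = \Hom_P(\sigma,\tau)$ is supported on $\{\tau : \sigma \leq \tau\}$, which is the \emph{open star} of $\sigma$, not its closure; as a sheaf this is $j_!\bC$ for $j: \mathrm{star}(\sigma) \hookrightarrow X$ the open inclusion, and $\Hom(j_!\bC, \cG) \cong \cG_\sigma$ (a stalk) is exact, confirming projectivity. By contrast $\bC_{\bar\sigma}$ is an \emph{injective} object: $\Hom(\bC_{\bar\sigma}, -) = \Gamma(\bar\sigma, -)$ is a limit over the face poset of $\sigma$ and fails to be exact already on $[0,1]$ (apply it to $0 \to \bC_{(0,1)} \to \bC_{[0,1]} \to \bC_{\{0,1\}} \to 0$). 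This is not a cosmetic slip: the mirror of the open-star projective $\bC_{\mathrm{star}_X(\sigma)}$ is not a single shifted copy of $R_n$ but rather an $R_n$-module built by extensions of $R_n[-k]$ for $\dim\sigma \leq k \leq d$, so applying $\wr$ termwise to the projective resolution does not produce a complex of free modules, and the argument does not close.

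Second, your mirror computation has a degree error. Your local-cohomology calculation $R\Hom(\bC_{\bar\sigma}, \cL) \simeq \cL_x[-n]$ is correct, but the functor $\cL \mapsto \cL_x[-n]$ is represented in $\Loc^c(T^n)$ by $\pi_!\omega_{\bR^n}$, whose mirror is $R_n$ in degree zero, not $R_n[n]$. The confusion is that the mirror of a rank-one local system $\cL$ is itself $\cL_x[-n]$ as a complex of vector spaces (the $[-n]$ comes from $\Hom(\omega_{\bR^n},\bC_{\bR^n}) = \bC[-n]$), so $\Hom_{R_n}(R_n, \cL_x[-n]) = \cL_x[-n]$ already matches with no extra shift. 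This is exactly the content of the paper's Lemma 2.10: for a compact contractible $S \subset \bR^n$ with $\pi|_S$ injective, $\wr(\pi_*\bC_S) \cong \pi_!\omega_{\bR^n}$ is mirror to $R_n$. Had the projectives actually been the $\bC_{\bar\sigma}$, you would want $R_n$ with no shift and no reindexing at the end.

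The lesson embedded in the paper's proof is instructive here: the objects with the cleanest mirror computation in this picture are the \emph{simple} cellular sheaves (extension by zero from open cells), with mirror $R_n[-k]$ for $k$ the cell dimension, and the closed-cell \emph{injectives}, with mirror $R_n$. The projectives, being open-star sheaves, are the awkward ones. Filtering $\cF$ by dimension, as the paper does, leverages the simples and sidesteps the need to produce a genuine free resolution of $M$; one only needs to bound the $\Ext$ amplitude of the filtration's graded pieces. If you want to keep a resolution-based approach, you would need either to control the mirrors of the open-star projectives more carefully or to switch to a different filtration argument — and in the latter case you would essentially be reconstructing the paper's proof.

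Finally, your claim that the incidence algebra of $P_X$ has global dimension $d$ is correct for simplicial complexes (the open intervals in a simplicial face poset are boolean and have spherical order complexes), but be aware that this requires argument and does not hold for arbitrary finite posets; it is not a purely chain-length statement.
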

\noindent One could also formulate a more geometric variant: if $\{L_t\}_{t>0}$ is a family of Lagrangian branes mirror to $M$, then the dimension of $\lim_{t \to 0} C(L_t)$ is at least $\pd(M)$. As evidence for the conjecture, we prove in Theorem \ref{thm:minimality} that it holds under the further hypothesis that $\cF$ is concentrated in a single degree. 

By construction $ \dim T(F^\bullet) = \pd(M)$, hence the conjecture would imply that $T(F^\bullet)$ has minimal dimension among mirror coamoebae of $M$. In particular, this immediately suggests an axiomatic notion of tropical Lagrangian coamoeba: any subset of $T^n$ which arises as the support of a constructible sheaf $\cF$ whose support has minimal dimension given the isomorphism class of $W\cF$ (equivalently, of its mirror $R_n$-module). In this framework, the role of the present paper would be to provide a source of explicit, piecewise-linear examples, and a natural question would be how far these are from providing a complete list of tropical Lagrangian coamoebae up to isotopy. Note also that this would provide a notion of tropical Lagrangian coamoeba in an arbitrary manifold $X$, since the above condition makes sense when $\cF$ is a sheaf on $X$. 

\subsection{Context: dimer models}
A major source of our interest in the constructions we introduce is that they reinterpret the spectral theory of dimer models as a special case of a general higher-dimensional picture.  Recall that dimer models deal with perfect matchings on bipartite graphs. For an edge-weighted graph $\Gamma \subset T^2$ (and the associated periodic graph in $\bR^2$), the dimer model is solvable in terms of a matrix-valued Laurent polynomial, the Kasteleyn operator $K(z,w)$ \cite{Kas67, KOS06}. 

The spectral curve $\det K(z,w) = 0$ in particular plays a key role in this theory. This curve is the support of the spectral data of $K(z,w)$ --- the coherent sheaf corresponding to its cokernel --- from which the edge weights on $\Gamma$ can be recovered up to gauge equivalence. The resulting interplay between graphical combinatorics and spectral data turns out to be of interest from a wide range of perspectives, including real algebraic geometry \cite{KO06}, string theory \cite{FHKV08}, integrable systems and cluster algebras \cite{GK13,FM16}, and of course probability theory \cite{KO07}. 

In \cite{TWZ19} it was shown that the passage to spectral data is a mirror map: an edge-weighted graph $\Gamma \subset T^2$ determines, via the construction of \cite{STWZ19}, a Lagrangian brane in $T^* T^2$ whose coamoeba is approximated by $\Gamma$, and whose mirror coherent sheaf is the spectral data of $K(z,w)$. But from the perspective of this coherent sheaf, the matrix $K(z,w)$ is the data of a free resolution, and the graph $\Gamma$ is the associated tropical Lagrangian coamoeba. One summary of the present paper is thus that the constructions just recalled admit a robust generalization, where $K(z,w)$ is replaced by any free resolution of any coherent sheaf on an algebraic torus of any dimension, and where the graph $\Gamma$ is replaced by a more general simplicial complex.  

\subsection{Context: brane brick models} Our constructions also provide a symplectic counterpart to the tropical coamoebae or brane brick models of \cite{FU14,FGLSY15,FLS16,FLSV17}. These generalize dimer models in $T^2$ in a different direction, extending two key observations of \cite{FHKV08}. First, the coamoeba of a sufficiently nice curve $\Sigma \subset (\bC^\times)^2$ retracts onto a bipartite graph $\Gamma$ of which $\Sigma$ is a spectral curve. And second, the adjacencies of the faces of $\Gamma$ determine the intersections of vanishing cycles of the equation defining~$\Sigma$. The works above argue more generally that the coamoeba of a sufficiently nice hypersurface in $(\bC^\times)^n$ retracts onto a codimension-one polyhedral complex (called a tropical coamoeba in \cite{FU14} or, when $n=3$, a brane brick model \cite{FGLSY15}), and that this complex divides $T^n$ into chambers whose adjacencies continue to encode intersections of vanishing cycles. 

We thus have two different generalizations of dimer models in $T^2$, the one just recalled and the one we provide. But this is to be expected: a curve in $(\bC^\times)^2$ is both a complex hypersurface and --- by applying a hyperk\"ahler rotation --- a Lagrangian submanifold. In particular, bipartite graphs in~$T^2$ are degenerations of both hypersurface coamoebae and Lagrangian coamoebae. In higher dimensions, however, these classes of coamoebae no longer overlap. Studying their combinatorial approximations thus leads to two generalizations of bipartite graphs in $T^2$, each extending different features of the dimer model. The one studied in \cite{FU14,FGLSY15} and successors extends the way adjacencies of faces of graphs in $T^2$ encode intersections of vanishing cycles. The one studied here instead extends the way edge weightings on graphs in~$T^2$ parametrize spectral data. 

Let us also mention that the coamoebae of subvarieties of $(\bC^\times)^n$ have been studied from various other points of view besides those above. We refer the reader to \cite{Nis09,NS13a,NS13b} for other studies of their combinatorial properties, and to \cite{KZ18,KN21} for connections to the topology of complex hypersurfaces. 

\subsection{Examples}\label{sec:introexamples}

We close the introduction with a few examples of our constructions.

\begin{Example}\label{ex:introexample}
	Consider the curve in $(\bC^\times)^3$ defined by $f = g = 0$, where $f = 1 + x + y$ and $g = 1 + z + xy$. Take $F^\bullet$ to be the Koszul resolution of its coordinate ring $R_3/(f,g)$:
	\begin{equation}\label{eq:Koszul}
		\begin{tikzpicture}
			[baseline=(current  bounding  box.center),thick,>=\arrtip]
			\node (a) at (0,0) {$R_3$};
			\node (b) at (3.5,0) {$R_3 \oplus R_3$};
			\node (c) at (7,0) {$R_3$};
			\draw[->] (a) to node[above] {$\begin{bmatrix}
					\shortminus g \\
					f
				\end{bmatrix} $} (b);
			\draw[->] (b) to node[above] {$\begin{bmatrix}
					f & g 
				\end{bmatrix}$} (c);
		\end{tikzpicture}
	\end{equation}
	The right picture in Figure \ref{fig:introexample} illustrates the associated tropical Lagrangian coamoeba~$T(F^\bullet)$. More precisely, $T(F^\bullet)$ depends on the choice of a point $x_i \in \bR^3$ for each rank-one summand of $F^\bullet$, and here we take $x_1 = (\frac23, \frac23, \frac13)$, $x_2 = (\frac13, \frac13, \frac13)$, $x_3 = (\frac13, \frac13, 0)$, and $x_4 = (0,0,0)$ (enumerating the summands above from left to right). 
	
	This coamoeba $T(F^\bullet)$ is the image of eighteen 2-simplices in $\bR^3$. These correspond to the eighteen terms that appear while checking that $d^2 = 0$, before performing any cancellations:
	\begin{equation*}
		\begin{bmatrix}
			1 + x + y & 1 + z + xy 
		\end{bmatrix}
		\begin{bmatrix}
			\shortminus(1 + z + xy) \\
			1 + x + y
		\end{bmatrix}
		=
		\begin{aligned}
			&\shortminus(1 + z + xy + x + xz + x^2y + y + yz + xy^2) \\
			&\,\, + (1 + x + y + z + xz + yz + xy + x^2y + xy^2).
		\end{aligned}
	\end{equation*}
	Specifically, to each term on the right we associate a 2-simplex in $\bR^3$, one of whose vertices is the exponent of that term. Its other vertices are $x_1$ and an integer translate of either $x_2$ or $x_3$, depending on whether the term came from $\shortminus g \cdot f$ or $f \cdot g$ (in which case the translation is by the exponent of the relevant term in $\shortminus g$ or $f$, respectively). 
	
	The union $S_1 \subset \bR^3$ of these simplices is pictured on the left of Figure \ref{fig:introexample}. It is the cone over the complete bipartite graph $K_{3,3}$ whose vertices are the translates of $x_2$ and $x_3$ indicated above (with each edge kinked at an additional bivalent vertex corresponding to an exponent of one of the right-hand terms). If we replaced $f$ and $g$ with any other trinomials, $T(F^\bullet)$ would still be the projection of the cone over some $K_{3,3}$, but the embedding $K_{3,3} \subset \bR^3$ would change so as to encode the new exponents in $f$ and $g$. On the other hand, changing only the coefficients in $f$ and $g$ would not affect the picture. 
	
	More generally, if we replaced $f$ and $g$ with arbitrary Laurent polynomials with $m$ and~$n$ terms, respectively, $T(F^\bullet)$ would be the projection of the cone over some $K_{m,n}$ (possibly embedded into $\bR^3$ with self-intersections). We also note that the 1-skeleton of $S^1$ (and its image in $T(F^\bullet)$) is naturally tripartite, with vertices colored by the degrees of the associated summands of $F^\bullet$.  
	
	The mirror complex $C^\bullet(F^\bullet)$ has as its degree $\shortminus 2$ term the sheaf $\pi_* \bC_{S_1}$. The degree~$\shortminus 1$ term is $\pi_* \bC_{S_2} \oplus \pi_* \bC_{S_3}$, where $S_2$ and $S_3$ are certain graphs in the boundary of $S_1$. Specifically, $S_2$ is the union of the three segments connecting $x_2$ to the exponents of $f$, similarly for $S_3$ but instead with $x_3$ and $g$. Finally, the degree zero term is $\pi_* \bC_{S_4}$, where $S_4 = \{x_4\}$. 
	
	Varying the coefficients in $f$ and $g$, we obtain a family of curves in $(\bC^\times)^3$ parametrized by $(\bC^\times)^6$. The mirror complexes of the associated Koszul resolutions are a family of complexes of constructible sheaves supported on $T(F^\bullet)$, all built out of the same sheaves $\pi_* \bC_{S_i}$ but with varying differentials. Note also that each edge in $T(F^\bullet)$ is labeled by a term in $f$, $g$, or their product. Thus a choice of coefficients can be regarded as an edge-weighting on the 1-skeleton of $T(F^\bullet)$, just as in the setting of dimer models. 
\end{Example}

\begin{figure}
	\centering
	\begin{tikzpicture}
		\node (img1) at (-.25,0) {\includegraphics[width=8cm]{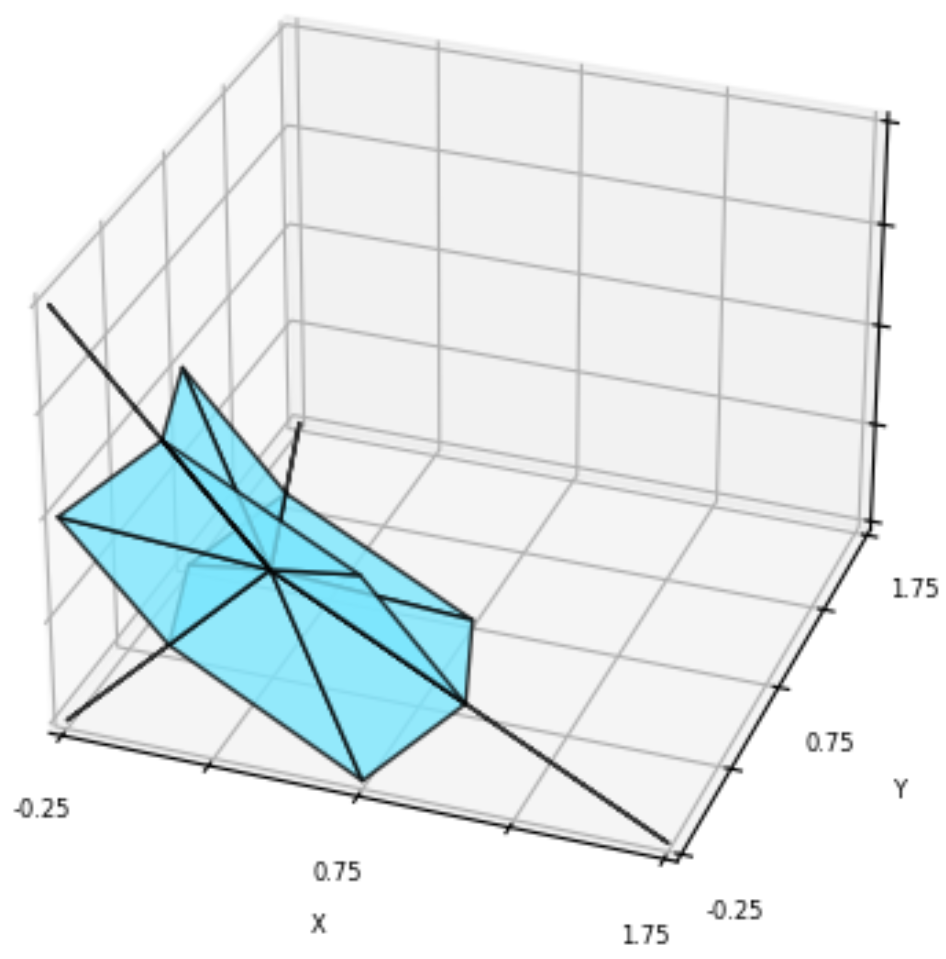}}; 
		
		\node (img2) at (8.25,0) {\includegraphics[width=6.5cm]{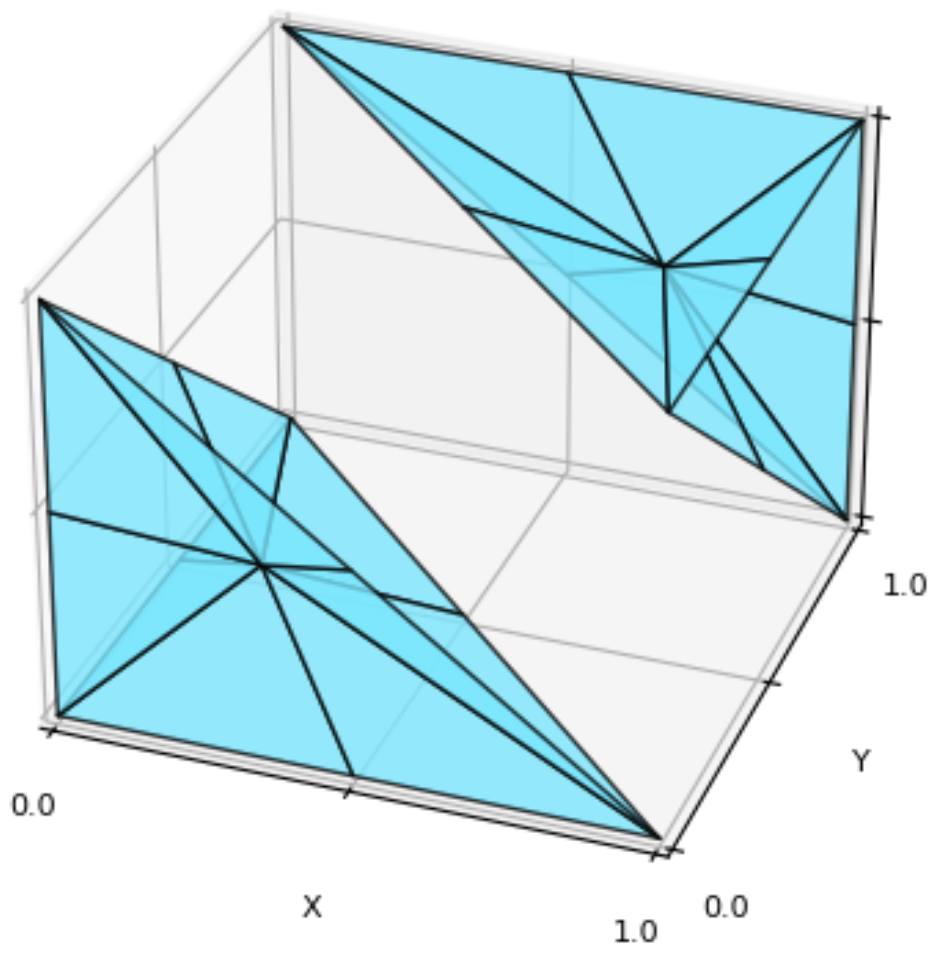}};  
		
		\draw[-stealth', thick] (img1.east) -- node[above] {} (img2.west);
	\end{tikzpicture}
	\caption{
		A tropical Lagrangian coamoeba mirror to the Koszul resolution of a line in $(\bC^\times)^3$. Concretely, it is the result of gluing two tetrahedra in~$T^3$ along their edges, then retracting each tetrahedron to a cone over its 1-skeleton; see Example \ref{ex:introlineexample}.}
	\label{fig:introlineexample}
\end{figure}

\begin{Example}\label{ex:introlineexample}
A generic line in $(\bC^\times)^3$ can be written as $f = g = 0$, where $f = a_0 + a_1x + a_2y + a_3z$ and $g = b_0 + b_1 x + b_2 y + b_3 z$ for some $a_0, \dotsc, b_3 \in \bC^\times$. Again take $F^\bullet$ to be the Koszul resolution  (\ref{eq:Koszul}), ordering its summands left to right. Taking $x_1 = (\frac14, \frac14, \frac14)$, $x_2 = x_3 = (0,0,0)$, and $x_4 = (\shortminus \frac14, \shortminus \frac14, \shortminus \frac14)$, the associated $T(F^\bullet)$ is pictured on the right of Figure \ref{fig:introlineexample}. 

For any choice of $x_i$, the coamoeba $T(F^\bullet)$ is the image of thirty-two 2-simplices in $\bR^3$. Setting $x_2 = x_3$, however, leads to only sixteen of these being distinct. Choosing the $x_i$ to be colinear further causes four of these 2-simplices to be degenerate. The union $S_1 \subset \bR^3$ of these simplices for the given choice of $x_i$ is pictured on the left of Figure \ref{fig:introlineexample}. 

This example connects our results to those of \cite{Han24}, which constructs a Lagrangian realization $L \subset (\bC^\times)^3$ of a 4-valent tropical curve in $\bR^3$. The coamoeba of $L$ is the union of two tetrahedra (the projections of $[0,1, 1, 1]$ and $[0,\shortminus 1, \shortminus 1, \shortminus 1]$ to $T^n$) glued along their edges. This coamoeba retracts onto the tropical coamoeba $T(F^\bullet)$ in an obvious way, the Lagrangian~$L$ degenerating into $N^* T(F^\bullet)$ in the process. We expect that this example fits into the framework of Section \ref{sec:branes}, i.e. that the sheaf $C^\bullet(F^\bullet)$ should be a limit of sheaves that quantize $L$ and this degenerating family. However, $L$ is singular (it is not even immersed), and we do not know enough about the sheaf quantization of singular Lagrangians to pursue such a result. 
\end{Example}

\begin{Example}\label{ex:intrographexample}
Let $n = 2$ and consider the matrix
\begin{equation*}
K = \begin{bmatrix} a_1 + a_2 x & b_1 + b_2 y^{-1} \\ c_1 + c_2 y & d_1 + d_2 x^{-1} \end{bmatrix},
\end{equation*}
where $a_1, \dotsc, d_2 \in \bC^\times$. The complex $F^\bullet = R_2^2 \xrightarrow{K} R_2^2$ is a free resolution of its cokernel, which for generic coefficients is the pushforward of an invertible sheaf on an elliptic curve in~$(\bC^\times)^2$. The tropical Lagrangian coamoeba $T(F^\bullet)$ is the bipartite graph $\Gamma$ on the right of Figure~\ref{fig:intrographexample}, from which we can recover $K$ as the Kasteleyn matrix of the indicated edge weighting (up to signs). In particular, the cokernel of $K$ is the spectral data of the dimer model on $\Gamma$. Specifically, here we have chosen $x_1 = (\frac34, \frac34)$, $x_2 = (\frac14, \frac14)$, $x_3 = (\frac14,  \frac34)$, and $x_4 = (\frac34, \frac14)$, where 1 and 2 index the degree $\shortminus 1$ summands of $F^\bullet$ (hence the columns of $K$) and 3 and 4 the degree zero summands of $F^\bullet$ (hence the rows of $K$). 

The complex $C^\bullet(F^\bullet)$ is of the form $\pi_* \bC_{S_1} \oplus \pi_* \bC_{S_2} \xrightarrow{d_K} \pi_* \bC_{\{x_3\}} \oplus \pi_* \bC_{\{x_4\}}$. Here $S_1$ is the plus-shaped graph on the left of Figure \ref{fig:intrographexample} consisting of $x_1$ and the edges adjacent to it, similarly for $S_2$. The differential is an epimorphism whose kernel may be described as follows. Write $\Gamma_{w \to b}$ and $\Gamma_{b \to w}$ for the quivers obtained by directing the edges of $\Gamma$ from white to black and black to white, respectively. The coefficients $a_1, \dotsc, d_2$ define a rank-one local system on $\Gamma$, which we may encode as a representation of $\Gamma_{b \to w}$ with dimension vector $(1,1,1,1)$. Applying a reflection functor at each white vertex turns this into a representation of $\Gamma_{w \to b}$ with dimension vector $(1,1,3,3)$. We may then encode this as a constructible sheaf on $\Gamma$ which is locally constant away from the white vertices --- it is this sheaf which is the kernel of $d_K$. On account of this description, we refer to such sheaves as ``reflected local systems". 

In Section \ref{sec:branes} we further show that $C^\bullet(F^\bullet)$ can be described as a limit of alternating sheaves supported on the white and black regions cut out by the zig-zag paths of $\Gamma$. This connects our results to those of \cite{TWZ19}, where we showed that these alternating sheaves are mirror to the spectral data associated to $K$. 
\end{Example}

\begin{figure}
	\centering
	\begin{tikzpicture}
		\newcommand*{\scl}{3}; 
		\node (a) [matrix] at (0,0) {
			\draw[gray!20, thick] (0,.5*\scl)--(2*\scl,.5*\scl);
			\draw[gray!20, thick] (0,1.5*\scl)--(2*\scl,1.5*\scl);
			\draw[gray!20, thick] (.5*\scl,0)--(.5*\scl,2*\scl);
			\draw[gray!20, thick] (1.5*\scl,0)--(1.5*\scl,2*\scl);
			\draw[thick] (0,0)--(2*\scl,0)--(2*\scl,2*\scl)--(0,2*\scl)--(0,0);
			\draw (1.25*\scl,.75*\scl)--(1.25*\scl,1.75*\scl);
			\draw (.75*\scl,.25*\scl)--(.75*\scl,1.25*\scl);
			\draw (.25*\scl,.75*\scl)--(1.25*\scl,.75*\scl);
			\draw (.75*\scl,1.25*\scl)--(1.75*\scl,1.25*\scl);
			\node at (.65*\scl,.65*\scl) {$x_2$};
			\node at (1.35*\scl,1.35*\scl) {$x_1$};
			\node at (1.35*\scl,.65*\scl) {$x_4$};
			\node at (.65*\scl,1.35*\scl) {$x_3$};
			\draw[black,fill=white] (.25*\scl,.75*\scl) circle (.1cm);
			\draw[black,fill=white] (.75*\scl,.25*\scl) circle (.1cm);
			\draw[black,fill=white] (.75*\scl,1.25*\scl) circle (.1cm);
			\draw[black,fill=white] (1.25*\scl,.75*\scl) circle (.1cm);
			\draw[black,fill=white] (1.25*\scl,1.75*\scl) circle (.1cm);
			\draw[black,fill=white] (1.75*\scl,1.25*\scl) circle (.1cm);
			\draw[black,fill] (1.25*\scl,1.25*\scl) circle (.1cm);
			\draw[black,fill] (.75*\scl,.75*\scl) circle (.1cm);
			\\};
		\node (b) [matrix] at (7.5,0) {
			\draw[thick] (0,0)--(\scl,0)--(\scl,\scl)--(0,\scl)--(0,0);
			\draw (.75*\scl,0)--(.75*\scl,\scl);
			\draw (.25*\scl,0)--(.25*\scl,\scl);
			\draw (0,.75*\scl)--(\scl,.75*\scl);
			\draw (0,.25*\scl)--(\scl,.25*\scl);
			\node at (1.1*\scl,.75*\scl) {$a_2$};
			\node at (1.1*\scl,.25*\scl) {$d_2$};
			\node at (.75*\scl,1.1*\scl) {$b_2$};
			\node at (.25*\scl,1.1*\scl) {$c_2$};
			\node at (.5*\scl,.85*\scl) {$a_1$};
			\node at (.5*\scl,.15*\scl) {$d_1$};
			\node at (.85*\scl,.5*\scl) {$b_1$};
			\node at (.35*\scl,.5*\scl) {$c_1$};
			\node at (.75*\scl,-.1*\scl) {$\,$};
			\draw[black,fill] (.25*\scl,.25*\scl) circle (.1cm);
			\draw[black,fill] (.75*\scl,.75*\scl) circle (.1cm);
			\draw[black,fill=white] (.25*\scl,.75*\scl) circle (.1cm);
			\draw[black,fill=white] (.75*\scl,.25*\scl) circle (.1cm);
			\\};
		\draw[-stealth', thick] ($(a.east) + (0.5, 0)$) -- node[above] {} ($(b.west) + (-0.5, 0)$); 
	\end{tikzpicture}
	\caption{
		A bipartite graph in $T^2$ realized as the tropical Lagrangian coamoeba of its Kasteleyn matrix, interpreted as the differential in a two-term free resolution of the associated spectral data; see Example \ref{ex:intrographexample}.} 
	\label{fig:intrographexample}
\end{figure}
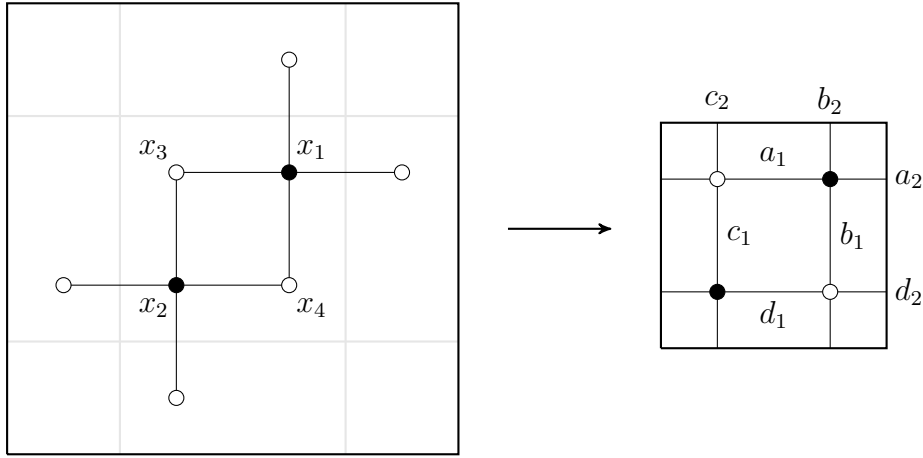

\begin{Example}
	Let $M \subset \bZ^n$ be a finite subset with $k$ elements, and let $f = \sum_{m \in M} c_m z^m$ be a Laurent polynomial with $c_m \neq 0$ for $m \in M$. The coordinate ring $R_n/(f)$ of the hypersurface $V(f) \subset (\bC^\times)^n$ has the defining resolution $F^\bullet = R_n \xrightarrow{f} R_n$. Taking $x_2$ to be the origin and $x_1$ to be generic, the tropical Lagrangian coamoeba $T(F^\bullet)$ is the image of the graph $S_1 \subset \bR^n$ consisting of $k$ edges radiating outward from $x_1$ to the elements of $M$. If $n>2$ this image is still an embedded graph, but if $n = 2$ its edges may cross each other. The complex $C^\bullet(F^\bullet)$ is of the form $\pi_* \bC_{S_1} \to \pi_* \bC_{\{0\}}$, and as in Example \ref{ex:intrographexample} it is quasi-isomorphic to a reflected local system placed in degree $\shortminus 1$. 
	
	When $n = 3$, this example connects our results to those of \cite{Lee07,LLP07}. For example, the graphs in Figures 8 and 9 of \cite{LLP07} are the $T(F^\bullet)$ just described when $f = 1 + x + y + z$ and $f = x + y + z + xy + xz + yz$, respectively. Similarly, the graphs in Figures 10 and 11 are the tropical Lagrangian coamoebae of rank-two resolutions of other hypersurfaces in $(\bC^\times)^3$. 
	
	In physical terms, a key theme of \cite{Lee07,LLP07} is that the use of dimer models in \cite{FHKV08} to analyze D4 branes probing $\mathrm{CY}_3$ singularities can be adapted to the setting of M2 branes probing $\mathrm{CY}_4$ singularities, but now with dimer models in $T^3$ rather than~$T^2$.  In particular, it is suggested in \cite[Sec. 3]{LLP07} that the topology of the special Lagrangian obtained from $V(f)$ by T-duality is described by ``twisting'' a tubular neighborhood of a corresponding bipartite graph in $T^3$ at each edge. Our results in Section~\ref{sec:branes} confirm this picture in a variant form: we embed this twisted tubular neighborhood as an exact ``alternating'' Lagrangian $L \subset T^* T^3$, and show that with suitable brane data $L$ is homologically mirror to the structure sheaf of $V(f)$. 
\end{Example}

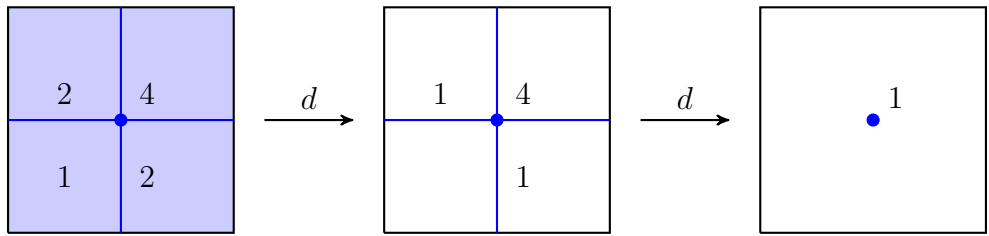
\begin{figure}
	\centering
	\begin{tikzpicture}
		\newcommand*{\scl}{3}; 
		\newcommand*{\spc}{5}; 
		\newcommand*{\rad}{.08cm}; 
		\node (a) [matrix] at (0,0) {
			\fill[blue!20] (0,0) rectangle (\scl,\scl);
			\draw[thick] (0,0)--(\scl,0)--(\scl,\scl)--(0,\scl)--(0,0);
			\draw[blue,thick] (0,.5*\scl)--(\scl,.5*\scl);
			\draw[blue,thick] (.5*\scl,0)--(.5*\scl,\scl);
			\draw[blue,fill] (.5*\scl,.5*\scl) circle (\rad);
			\node at (.5*\scl+.35,.5*\scl+.35) {$4$};
			\node at (.25*\scl,.5*\scl+.35) {$2$};
			\node at (.5*\scl+.35,.25*\scl) {$2$};
			\node at (.25*\scl,.25*\scl) {$1$};
			\\};
		\node (b) [matrix] at (\spc,0) {
			\draw[thick] (0,0)--(\scl,0)--(\scl,\scl)--(0,\scl)--(0,0);
			\draw[blue,thick] (0,.5*\scl)--(\scl,.5*\scl);
			\draw[blue,thick] (.5*\scl,0)--(.5*\scl,\scl);
			\draw[blue,fill] (.5*\scl,.5*\scl) circle (\rad);
			\node at (.5*\scl+.35,.5*\scl+.35) {$4$};
			\node at (.25*\scl,.5*\scl+.35) {$1$};
			\node at (.5*\scl+.35,.25*\scl) {$1$};
			\\};
		\node (c) [matrix] at (2*\spc,0) {
			\draw[thick] (0,0)--(\scl,0)--(\scl,\scl)--(0,\scl)--(0,0);
			\draw[blue,fill] (.5*\scl,.5*\scl) circle (\rad);
			\node at (.5*\scl+.3,.5*\scl+.3) {$1$};
			\\};
		\draw[-stealth', thick] ($(a.east) + (0.25, 0)$) -- node[above] {$d$} ($(b.west) + (-0.25, 0)$); 
		\draw[-stealth', thick] ($(b.east) + (0.25, 0)$) -- node[above] {$d$} ($(c.west) + (-0.25, 0)$); 
	\end{tikzpicture}
	\caption{When $F^\bullet$ is the Koszul resolution of a point, $C^\bullet(F^\bullet)$ is a complex of ``cubical'' sheaves quasi-isomorphic to the corresponding local system. Here the numbers in each picture indicate dimensions of stalks along the coordinate stratification of $T^2$; see Example \ref{ex:introcubicalexample}.} 
	\label{fig:introcubicalexample}
\end{figure}

\begin{Example}\label{ex:introcubicalexample}
	The most basic example of a pair of mirror sheaves is the skyscraper $\bC_a$ at $a = (a_1, \dotsc, a_n) \in (\bC^\times)^n$ and the local system $L_a$ with holonomies $a_1, \dotsc, a_n$ around the coordinate cycles in $T^n$. The skyscraper $\bC_a$ has a coordinate-wise Koszul resolution $F^\bullet$ with a homogeneous basis indexed by $\{0,1\}^n$, where $i = (i_1, \dotsc, i_n) \in \{0,1\}^n$ labels an element of degree $\shortminus \sum i_k$. Choose $x_{i}$ to have $k$th entry $\frac12$ if $i_k = 1$ and $0$ if $i_k = 0$. The associated simplicial complex $S_{(1,\dotsc,1)} \subset \bR^n$ is the hypercube $[0,1]^n$, expressed as a union of $2^n n!$ $n$-simplices. The tropical Lagrangian coamoeba $T(F^\bullet)$ is its image, which is all of $T^n$.  
	
	More generally, $S_i \subset \bR^n$ is the face of $S_{(1,\dotsc,1)}$ where we set the $k$th coordinate to be $0$ for each $k$ with $i_k = 0$. The mirror complex $C^\bullet(F^\bullet)$ is a ``resolution'' of the local system $L_a$ (shifted to degree $\shortminus n$) by direct sums of the ``cubical'' sheaves $\pi_* \bC_{S_i}$. Figure \ref{fig:introcubicalexample} illustrates this when $n=2$. Its three frames depict the three terms in $C^\bullet(F^\bullet)$ by indicating the dimensions of their stalks along the coordinate stratification of $T^2$. 
\end{Example}

\subsection*{Acknowledgements}
We thank Adam Boocher, Sebastian Franco, Sheel Ganatra, Peter Haine, Sebastian Haney, Jeff Hicks, David Nadler, Mahrud Sayrafi, David Treumann, Eric Zaslow, and Ilia Zharkov for valuable comments and discussions. We especially thank Wenyuan Li for helping us navigate many technical issues. C. K. was supported by NSF grant DMS-1928930, and H. W. was supported by NSF grant DMS-2143922. 

\section{Generalities}

We collect here some general results we will refer back to as needed, as well as summarizing the terminology and notation we will use. 

\subsection{Conventions and notation} By default, category will mean $\infty$-category, but the reader may safely ignore this (i.e. most arguments will apply equally to derived $\infty$-categories and their underlying triangulated categories). Given a ring $R$, we write $\Mod_R$ for the category of $R$-module spectra. Its homotopy category is the classical unbounded derived category of $R$-modules. The heart $\Mod_R^\heartsuit$ of its natural t-structure is the ordinary category of $R$-modules. 

By default space will mean locally compact Hausdorff space. We write $\Sh(X)$ for the category of $\Mod_\bC$-valued sheaves on a space $X$, and $\Loc(X)$ for its subcategory of locally constant sheaves. These categories have natural t-structures whose hearts $\Sh(X)^\heartsuit$ and $\Loc(X)^\heartsuit$ are the ordinary categories of $\Mod_\bC^{\heartsuit}$-valued sheaves and local systems. The category $\Loc(X)$ is compactly generated and we write $\Loc^c(X)$ for its subcategory of compact objects. 

To a continuous map $f: X \to Y$ are associated functors $f^*, f^!: \Sh(Y) \to \Sh(X)$ and $f_*,f_!: \Sh(X) \to \Sh(Y)$. Along with tensor and sheaf Hom, these functors extend their classical counterparts defined at the level of homotopy categories with suitable boundedness conditions. We refer to \cite[Ch. 2]{KS94} for the classical theory and \cite{Jin24,Volpe-six-operations} for details on its unbounded, $\infty$-categorical extension. We write $\pt_X: X \to \pt$ for the projection, and have the constant and dualizing sheaves $\bC_X := \pt_X^* \bC$ and $\omega_X := \pt_X^! \bC$. 

By default manifold will mean real analytic manifold, as in \cite[Ch. 8]{KS94}. A sheaf on a manifold $M$ is constructible if it has perfect stalks and locally constant restrictions to the strata of a locally finite subanalytic stratification \cite[Def. 8.4.3, Thm. 8.3.20]{KS94}. More generally, a sheaf on a locally closed subanalytic subset $X \subset M$ is constructible if it satisfies this condition. We write $\Sh^b(X) \subset \Sh(X)$ for the subcategory of constructible sheaves. It is closed under the Verdier dual $\bD_X:= \cHom(-,\omega_X)$, which we write as $\bD$ when $X$ is clear from context. If $i: X \into M$ is the inclusion, we sometimes write $\bC_X$ and $\omega_X$ for $i_! \bC_X$ and $i_! \omega_X$ when $i_!$ is clear from context. Note that if $X$ is closed then $i_* \cong i_!$ and $\omega_X \cong \bD_M \bC_X$. 

Associated to $\cF \in \Sh(M)$ is its singular support $ss(\cF) \subset T^* M$, a (typically singular) conic Lagrangian if $\cF$ is constructible. Writing $S^* M$ for the cosphere bundle of $M$, we define the asymptotic boundary $\partial_\infty L \subset S^* M$ of a subset $L \subset T^* M$ as follows. We form the fiberwise spherical compactification of $T^* M$, take the closure of $L$, and then intersect this with the fiberwise boundary. The asymptotic boundary of $ss(\cF)$ will also be denoted by $ss^\infty(\cF)$. 

\subsection{Wrapping functors}
Given a manifold $M$ we write $\inclarg{M}$ for the inclusion $\Loc(M) \subset \Sh(M)$ of locally constant sheaves. It has left and right adjoints denoted by $W^+_M$ and $W^-_M$, respectively. We abbreviate $W^+_M$ to $W_M$ by default, as we use it more often than $W^-_M$. We write $\incl$, $W$, and $W^-$ when $M$ is clear from context. We refer to $W_M$ and $\wrneg_M$ as the positive and negative wrapping functors, as they are respectively a colimit over positive contact isotopies and a limit over negative contact isotopies of $S^* M$ \cite[Thm. 1.2]{Kuo23}. 

We will make use of the following general results. For the first, recall that the convolution with respect to $K \in \Sh(M \times M)$ is the functor $ K \circ - : \Sh(M) \to \Sh(M)$ given by $p_{1!}(K \otimes p_2^*(-)),$ 
where $p_1$ and $p_2$ are the projections to the left and right factors. 

\begin{Lemma}[{\cite[Lemma 3.5.7]{Kuo22}}]\label{lem:W_as_kernel}
Let $M$ be a manifold and $\bC_\Delta \in \Sh(M \times M)$ the constant sheaf on diagonal. Then $\wr_M$ is isomorphic to $(\wr_{M \times M} \bC_{\Delta}) \circ -$. 
\end{Lemma}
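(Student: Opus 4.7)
The plan is to identify both $W_M$ and $(W_{M\times M}\bC_{\Delta})\circ -$ as the left adjoint to the inclusion $\iota_M:\Loc(M)\into\Sh(M)$; the desired isomorphism then follows by uniqueness of adjoints. Concretely, this requires checking that the functor $(W_{M\times M}\bC_{\Delta})\circ -$ lands in $\Loc(M)$, and that the natural transformation $\mathrm{id}\to (W_{M\times M}\bC_{\Delta})\circ -$ induced by the wrapping unit $\bC_{\Delta}\to W_{M\times M}\bC_{\Delta}$ satisfies the appropriate universal property.

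For the first point, set $K:=W_{M\times M}\bC_{\Delta}$, which by construction lies in $\Loc(M\times M)$. Since $p_1$ is a submersion, a stalk computation gives $(K\circ \cF)_x \cong R\Gamma_c(M;K|_{\{x\}\times M}\otimes \cF)$, and $K|_{\{x\}\times M}$ varies locally constantly with $x$, so the stalk does as well. Equivalently, $ss(K)$ lies in the zero section of $T^*(M\times M)$, so $ss(K\otimes p_2^*\cF)\subset \{0\}\times ss(\cF)$ has image in the zero section of $T^*M$ under $p_1$, confirming $K\circ \cF\in \Loc(M)$.

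For the second point, I would invoke the colimit description of wrapping from \cite[Thm.~1.2]{Kuo23}: $W_M\cong \colim_\phi \phi_*$ as a filtered colimit over positive Hamiltonian isotopies $\phi$ of $T^*M$, where $\phi_*$ denotes the associated Guillermou--Kashiwara--Schapira quantization. Each such $\phi_*$ is realized as convolution with a kernel $\bC_{\Gamma_\phi}\in \Sh(M\times M)$, and $\bC_{\Gamma_{\mathrm{id}}}=\bC_{\Delta}$. Since convolution preserves colimits in the kernel variable,
\[
W_M\cF \cong \left(\colim_\phi \bC_{\Gamma_\phi}\right)\circ \cF.
\]
It then remains to identify $\colim_\phi \bC_{\Gamma_\phi}$ with $W_{M\times M}\bC_{\Delta}$: both are characterized as the universal locally constant sheaf on $M\times M$ receiving a map from $\bC_{\Delta}$, one along first-factor positive isotopies, the other along all positive isotopies of $M\times M$.

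The main obstacle is this last identification, which is a cofinality statement: isotopies of the form $\phi\times \mathrm{id}$ should be cofinal among all positive contact isotopies of $S^*(M\times M)$ when wrapping $\bC_{\Delta}$. The key geometric input is the anti-diagonal structure of the conormal $N^*_{\Delta}=\{(x,x,\xi,-\xi)\}$: under the identification $\xi\leftrightarrow -\xi$ implicit in $N^*_{\Delta}$, positive first-factor Reeb flow on $S^*(M\times M)$ suffices to realize the full positive wrapping of $\bC_{\Delta}$. Once this cofinality is established, the required isomorphism follows formally.
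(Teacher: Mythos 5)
The paper does not prove this lemma: it is cited verbatim from \cite[Lemma~3.5.7]{Kuo22}, so there is no internal argument to compare with, and your proposal must stand on its own.

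Your opening plan is the right one. Since $\wr_M$ is \emph{defined} as the left adjoint of $\iota_M:\Loc(M)\hookrightarrow\Sh(M)$, it suffices to verify that $(\wr_{M\times M}\bC_\Delta)\circ-$ satisfies the same universal property. Your check that this functor lands in $\Loc(M)$ is correct, and the stalk computation is the cleanest way to see it; the singular-support version as you phrase it needs care, since $p_1$ need not be proper on $\supp(K\otimes p_2^*\cF)$ when $\cF$ has noncompact support. But you never finish the adjoint-uniqueness argument: you do not verify that precomposing with the natural map $\cF\to(\wr_{M\times M}\bC_\Delta)\circ\cF$ gives an equivalence $\Hom((\wr_{M\times M}\bC_\Delta)\circ\cF,\,L)\to\Hom(\cF,L)$ for $L\in\Loc(M)$.

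Instead you pivot to the colimit description of wrapping, which trades that verification for a cofinality claim that you also do not prove. There are two issues. First, a minor one: the kernel quantizing a positive contact isotopy $\phi$ is not the constant sheaf $\bC_{\Gamma_\phi}$ on a graph; the GKS kernel is a genuine constructible sheaf with singular support over a twisted graph, so the notation is misleading. Second, and decisively: the isotopies $\phi\times\mathrm{id}$ are only \emph{nonnegative}, not strictly positive, as contact isotopies of $S^*(M\times M)$ --- the associated contact Hamiltonian vanishes where the first cotangent coordinate is zero --- so they do not a priori appear in the colimit diagram of \cite[Thm.~1.2]{Kuo23} that computes $\wr_{M\times M}\bC_\Delta$. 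Establishing the cofinality you need therefore requires either a nonnegative-wrapping extension of that theorem, or a perturbation of $\phi\times\mathrm{id}$ into honestly positive isotopies with control over the effect of the resulting kernels on $\bC_\Delta$. The antidiagonal shape of $N^*_\Delta=\{(x,x,\xi,-\xi)\}$ is indeed the geometric reason such an argument should succeed, but as written it is a heuristic, not a proof. Your proposal is incomplete at exactly the step you yourself flag as the main obstacle.
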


For the next statement, note that $f^*$ preserves local systems for any map $f: M \to N$. Thus $f^* \inclarg{N} \cong \inclarg{M} f^*$, and by adjunction we obtain a map $\wr_M f^* \to f^* \wr_N$. 

\begin{Lemma}\label{lem:W_and_upper*}
Let $f: M \to N$ be a submersion of manifolds. Then the canonical map $\wr_M f^* \to f^* \wr_N$ is an isomorphism. 
\end{Lemma}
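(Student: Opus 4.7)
My plan is to use the kernel description in Lemma~\ref{lem:W_as_kernel} to reduce the statement to an identity of kernels on $M \times N$.

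Set $K_X = W_{X \times X} \bC_{\Delta_X}$, so that Lemma~\ref{lem:W_as_kernel} gives $W_X \cong K_X \circ (-)$ for $X \in \{M,N\}$. Expanding $W_M f^* \mathcal{F} = K_M \circ f^* \mathcal{F}$ as a convolution on $M \times M$ and pushing through the submersion $\id_M \times f : M \times M \to M \times N$ via the projection formula (noting $p_1^{M,N} \circ (\id_M \times f) = p_1^M$ and $p_2^{M,N} \circ (\id_M \times f) = f \circ p_2^M$), I obtain
$$ W_M f^* \mathcal{F} \cong p_{1!}^{M,N}\bigl((\id_M \times f)_! K_M \otimes p_2^{M,N*} \mathcal{F}\bigr). $$
Similarly, expanding $f^* W_N \mathcal{F}$ as a convolution on $N \times N$ and applying proper base change along the Cartesian square formed by $f$ and the first projection $p_1^N : N \times N \to N$ (so that $f^* p_{1!}^N \cong p_{1!}^{M,N}(f \times \id_N)^*$) yields
$$ f^* W_N \mathcal{F} \cong p_{1!}^{M,N}\bigl((f \times \id_N)^* K_N \otimes p_2^{M,N*} \mathcal{F}\bigr). $$
It therefore suffices to establish the kernel identity
$$ (\id_M \times f)_! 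K_M \congto (f \times \id_N)^* K_N $$
in $\Sh(M \times N)$.

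Before applying $W$ to either diagonal, both $(\id_M \times f)_! \bC_{\Delta_M}$ and $(f \times \id_N)^* \bC_{\Delta_N}$ are canonically identified with $\bC_{\Gamma_f}$, the constant sheaf on the graph of $f$: the map $\id_M \times f$ restricts to a homeomorphism $\Delta_M \to \Gamma_f$, and $\Gamma_f = (f \times \id_N)^{-1}(\Delta_N)$. The claim therefore asserts that the two natural ways of ``wrapping'' $\bC_{\Gamma_f}$ agree. I would verify this via the contact-isotopic description of $W$: the submersion $f$ induces, through the fiberwise inclusion $f^* T^*N \hookrightarrow T^*M$, a compatible transport of positive contact isotopies between $S^*N$ and $S^*M$, and consequently between $S^*(N \times N)$, $S^*(M \times N)$, and $S^*(M \times M)$. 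The wrapping colimits on both sides of the desired identity receive cofinal maps from a common diagram indexed by isotopies adapted to $f$.

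The main obstacle is this cofinality statement, which requires verifying that the $f$-adapted isotopies actually wrap through all the relevant directions of the cosphere bundles. To manage the bookkeeping, I would work locally on $M$ --- using that both $f^*$ and $W$ behave well under restriction to open subsets --- and invoke the local normal form for submersions to reduce to the case of a trivial projection $f : N \times \bR^k \to N$. One then inducts on $k$, with the base case $k=1$ handled directly: the product structure on $S^*(N \times \bR)$ and the contractibility of the $\bR$-fiber allow for an explicit identification of the wrapping colimits, essentially because pulling back along $N \times \bR \to N$ does not introduce new cosphere directions that require separate wrapping.
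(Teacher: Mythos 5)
Your reduction of the statement to a kernel identity is a reasonable idea and the computations are correct: the projection formula along $\id_M \times f$ and proper base change along the Cartesian square formed by $f$ and $p_1^N$ do give the two displayed formulas, so it does suffice to show $(\id_M\times f)_!K_M\cong (f\times\id_N)^*K_N$. However, the plan for proving that identity has two serious problems.

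The first is a hidden circularity. Using Lemma~\ref{lem: compatibility-pushforward-local-system} (whose proof is independent of this lemma), the left side becomes
$$(\id_M\times f)_!\wr_{M\times M}\bC_{\Delta_M}\cong \wr_{M\times N}(\id_M\times f)_!\bC_{\Delta_M}\cong \wr_{M\times N}\bC_{\Gamma_f},$$
while the right side is $(f\times\id_N)^*\wr_{N\times N}\bC_{\Delta_N}$ with $(f\times\id_N)^*\bC_{\Delta_N}\cong\bC_{\Gamma_f}$. So the kernel identity is literally the statement of the lemma for the submersion $f\times\id_N$ applied to the object $\bC_{\Delta_N}$. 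Passing to kernels trades the original claim for an instance of the same claim; you would still need an independent argument for this one instance.

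The second problem is that the localization strategy cannot succeed: $\wr$ is the left adjoint of the inclusion of local systems, and local systems are intrinsically global objects (their behavior records monodromy around loops), so $\wr$ does not commute with restriction to open subsets. Indeed, the purported compatibility ``$j^*\wr_M\cong\wr_U j^*$'' for $j:U\hookrightarrow M$ open is just another instance of this lemma, and it fails e.g. for a skyscraper at a point $q$ of a contractible open arc $U\subset S^1$: $\wr_U \bC_q$ is a rank-one local system while $j^*\wr_{S^1}\bC_q$ has stalk $\bC[\bZ]$. Thus reducing to the normal form $N\times\bR^k\to N$ and inducting on $k$ is not available, and the ``cofinality statement'' you flag as the main obstacle — which would be where all the content lives — is left entirely unaddressed.

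The paper's own proof avoids all of this by a short adjunction argument: pass the natural transformation $\wr_M f^*\to f^*\wr_N$ to right adjoints, where it becomes the map $\inclarg{N}\wrneg_N f_*\inclarg{M}\to f_*\inclarg{M}$; this is an isomorphism exactly when $f_*$ carries local systems to local systems, which holds since $f$ is a topological submersion. The key move is to convert the hard-to-compute left adjoint $\wr$ into a transparent assertion about $f_*$, which is verified directly rather than via wrapping colimits.
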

\begin{proof}
The right adjoint of the restriction $f^*: \Loc(N) \to \Loc(M)$ is $\wrneg_N f_*$. Passing to right adjoints, the claim becomes equivalent to the map $\inclarg{N} \wrneg_N f_* \to f_* \inclarg{M}$ being an isomorphism. But this is just the condition that $f_*$ preserves local systems, which holds since $f$ is a topological submersion.
\end{proof}

\begin{Lemma}\label{lem: compatibility-pushforward-local-system}
Let $f: M \rightarrow N$ be a submersion of manifolds. Then there is a canonical isomorphism $\wr_N f_! \cong f_! \wr_M$.
\end{Lemma}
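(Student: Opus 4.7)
The plan is to mirror the proof of the preceding lemma by passing to right adjoints. Using the adjunctions $f_! \dashv f^!$ and $\wr_M \dashv \incl_M$, the isomorphism $\wr_N f_! \cong f_! \wr_M$, viewed as an identification of functors $\Sh(M) \to \Loc(N)$, corresponds under right adjoints to the identification $f^! \incl_N \cong \incl_M f^!|_{\Loc(N)}$ of functors $\Loc(N) \to \Sh(M)$. This is precisely the assertion that $f^!$ preserves local systems.

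To establish that $f^!$ preserves local systems for a submersion $f$, I would invoke the standard identification $f^! \cong f^* \ot \omega_{M/N}$, where the relative dualizing sheaf $\omega_{M/N} := f^! \bC_N$ is, up to shift by the relative dimension, the relative orientation local system on $M$. Since $f^*$ preserves local systems and tensoring with a shifted local system preserves local systems, so does $f^!$.

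For the right-adjoint reformulation to make sense one also needs $f_! \wr_M$ to land in $\Loc(N)$, i.e.\ $f_!$ itself to preserve local systems. For a submersion this follows from local triviality and the projection formula: locally on the target, $f$ is modeled on a product $U \times \bR^d \to U$, and $f_!$ of a local system reduces via the projection formula to a shifted local system on $U$, and these local identifications patch. The hard part of the proof is therefore not the adjunction bookkeeping but the invocation of $f^! \cong f^* \ot \omega_{M/N}$ and the corresponding preservation statement for $f_!$ in the $\infty$-categorical setting adopted in the paper; granted these standard six-functor inputs, the argument reduces to a routine adjunction manipulation directly paralleling the preceding lemma.
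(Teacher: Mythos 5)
Your proof is correct and takes essentially the same approach as the paper: pass to the adjoint side and reduce the claim to the two facts that $f^!$ and $f_!$ each preserve local systems for a submersion $f$, with $f^!$ handled via $f^! \cong f^* \otimes \omega_{M/N}$. The paper runs the adjunction argument in the opposite direction (taking left adjoints of $f^! \inclarg{N} \cong \inclarg{M} f^!_{\mathrm{Loc}}$ and then applying preservation for $f_!$), but this is only a cosmetic difference.
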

\begin{proof}
Since $f$ is a submersion $f^! \cong f^* \otimes \omega_f$, hence $f^!$ preserves local systems. Thus we have an isomorphism $f^! \inclarg{N} \cong \inclarg{M} f^!_{Loc}$, where $f^!_{Loc}: \Loc(N) \to \Loc(M)$ denotes the restriction of~$f^!$. Since $\wr_N f_!$ is left adjoint to~$f^!_{Loc}$ we obtain an isomorphism $\wr_N f_! \cong \wr_N f_! \wr_M$ by taking left adjoints. The desired identity follows since $f$ being a submersion implies $f_!$ preserves local systems, hence $\wr_N f_! \wr_M \cong f_! \wr_M$. 
\end{proof}

\subsection{Sheaves on tori}\label{sec:torisheaves}
Next we collect some general results about sheaves on $T^n$ and their mirrors. By the mirror of a sheaf $\cF \in \Sh(T^n)$ we will mean its image under 
$$ \Hom(\pi_! \omega_{\bR^n},\wr(-)): \Sh(T^n) \to \Mod_{R_n}, $$
where again we write $R_n$ for $\bC[z_1^{\pm1}, \dotsc, z_n^{\pm1}]$ and $\pi: \bR^n \to T^n$ for the projection. This functor is the composition of $\wr$ with a certain normalization of the monodromy equivalence between local systems on $T^n$ and $R_n$-modules, but it will be convenient to realize the latter equivalence as the functor corepresented by $\pi_! \omega_{\bR^n}$. This in turn requires a standard isomorphism $\End(\pi_! \omega_{\bR^n}) \cong R_n$, which we review here since the details will be needed in Section \ref{sec:sheaves}. Below we write $\tau_m$ for the translation $x \mapsto x + m$ associated to $m \in \bZ^n$. 

\begin{Proposition}\label{prop:endcomp}
	There is a canonical isomorphism $\pi^! \pi_! \cong \bigoplus_{m \in \bZ^n} \tau_{m!}$. In particular, 
	$$\End(\pi_! \omega_{\bR^n}) \cong \bigoplus_{m \in \bZ} \Hom(\omega_{\bR^n}, \tau_{m!} \omega_{\bR^n}) \cong R_n,$$
	where for any $m \in \bZ^n$ the second map takes the canonical isomorphism $\omega_{\bR^n} \cong \tau_{m!}\omega_{\bR^n}$ to $z^m$. 
\end{Proposition}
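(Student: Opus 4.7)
The plan is to apply proper base change along the covering map $\pi$ and then unfold the $(\pi_!, \pi^!)$-adjunction.

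Since $\pi\colon \bR^n \to T^n$ is a covering map it is \'etale, so the canonical map $\pi^! \to \pi^*$ is an isomorphism. The fiber product decomposes as $\bR^n \times_{T^n} \bR^n = \bigsqcup_{m \in \bZ^n} \Gamma_m$, with $\Gamma_m := \{(y+m, y) : y \in \bR^n\}$. Parameterizing $\Gamma_m$ by its second coordinate, the projections $p_1, p_2$ from the fiber product to $\bR^n$ restrict on $\Gamma_m$ to $\tau_m$ and $\id$, respectively. Proper base change applied to the Cartesian square $\pi p_1 = \pi p_2$ therefore yields
$$\pi^! \pi_! \,\cong\, \pi^* \pi_! \,\cong\, p_{1!} p_2^* \,\cong\, \bigoplus_{m \in \bZ^n} \tau_{m!},$$
establishing the first isomorphism.

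For the endomorphism decomposition, the $(\pi_!, \pi^!)$-adjunction gives
$$\End(\pi_!\omega_{\bR^n}) \cong \Hom(\omega_{\bR^n}, \pi^!\pi_!\omega_{\bR^n}) \cong \Hom\bigl(\omega_{\bR^n}, \bigoplus_{m} \tau_{m!}\omega_{\bR^n}\bigr).$$
Because each $\tau_m$ is a homeomorphism, $\tau_{m!}\omega_{\bR^n} \cong \omega_{\bR^n}$ canonically, and a check on stalks shows that $\bigoplus_m \tau_{m!}\omega_{\bR^n}$ is a locally constant sheaf on $\bR^n$ with stalk $\bigoplus_m \bC[n]$. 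Since $\Loc(\bR^n) \subset \Sh(\bR^n)$ is fully faithful and, as $\bR^n$ is contractible, the stalk functor gives an equivalence $\Loc(\bR^n) \simeq \Mod_\bC$ commuting with direct sums, the Hom reduces to a computation in $\Mod_\bC$. Compactness of $\bC[n]$ then produces
$$\End(\pi_!\omega_{\bR^n}) \cong \bigoplus_{m \in \bZ^n} \Hom(\omega_{\bR^n}, \tau_{m!}\omega_{\bR^n}),$$
with each summand a copy of $\bC$ spanned by the canonical isomorphism $\omega_{\bR^n} \cong \tau_{m!}\omega_{\bR^n}$.

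Finally, let $\phi_m \in \End(\pi_!\omega_{\bR^n})$ denote the image of this canonical generator. Unfolding the adjunction identifies $\phi_m$ with the endomorphism of $\pi_!\omega_{\bR^n}$ induced by the deck transformation $\tau_m$ via the tautological factorization $\pi_!\omega_{\bR^n} = \pi_!\tau_{m!}\omega_{\bR^n} \cong \pi_!\omega_{\bR^n}$. The relation $\tau_m \tau_{m'} = \tau_{m+m'}$ then forces $\phi_m \phi_{m'} = \phi_{m+m'}$, so $z^m \mapsto \phi_m$ defines a ring isomorphism $R_n \cong \End(\pi_!\omega_{\bR^n})$ with the claimed behavior on generators. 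I expect the main obstacle to be the bookkeeping in this last step: one must track orientations and the order of composition through the chain of canonical isomorphisms to confirm that the generator of the $m$-summand really corresponds to $z^m$ (rather than $z^{-m}$) and that composition on the sheaf side matches addition of exponents in $R_n$ (rather than some twisted product).
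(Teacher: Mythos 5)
Your proof is correct and follows essentially the same route as the paper: decompose the fiber product $\bR^n \times_{T^n} \bR^n$ into copies of $\bR^n$ indexed by $\bZ^n$, apply proper base change together with $\pi^! \cong \pi^*$ to get the first isomorphism, and then reduce the $\Hom$ computation to $\Loc(\bR^n) \simeq \Mod_\bC$ where $\omega_{\bR^n}$ is compact. The paper likewise omits a full verification of the ring structure, noting only that an elaboration identifies $\pi^!\pi_!$ with $\bigoplus_m \tau_{m!}$ as monads, so your deck-transformation sketch is in the same spirit.
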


\begin{proof}
	For any $m \in \bZ^n$ we have a diagram 
	\begin{equation*}
		\begin{tikzpicture}
			[baseline=(current  bounding  box.center),thick,>=\arrtip]
			\node (aa) at (0,0) {$\bR^n $};
			\node (ab) at (3.5,0) {$\bR^n \times \bZ^n$};
			\node (ac) at (7,0) {$\bR^n$};
			\node (bb) at (3.5,-1.5) {$\bR^n$};
			\node (bc) at (7,-1.5) {$T^n$.};
			\draw[->] (aa) to node[above] {$i_m $} (ab);
			\draw[->] (ab) to node[above] {$p $} (ac);
			\draw[->] (bb) to node[above] {$\pi $} (bc);
			\draw[->] (aa) to node[below left] {$\tau_m $}(bb);
			\draw[->] (ab) to node[right] {$a $} (bb);
			\draw[->] (ac) to node[right] {$\pi $} (bc);
		\end{tikzpicture}
	\end{equation*}
	where $p$ projects to the first factor, $a$ adds the factors, $i_m$ is the inclusion of $\bR^n \times \{m\}$, and the right square is Cartesian. The sum $\bigoplus_{m \in \bZ^n} i_{m!} i_m^! \to \id$ of counit maps is an isomorphism since it is so locally. The base change map $a_! p^! \to \pi^! \pi_!$ is an isomorphism since $\pi$ is a local homeomorphism (and thus $\pi^! = \pi^*$). Since $\tau_m = a \circ i_m$,  $\id = p \circ i_m$, and since $a_!$ preserves colimits, we thus obtain an isomorphism
	$$ \bigoplus_{m \in \bZ^n} \tau_{m!} \congto \bigoplus_{m \in \bZ^n} a_! i_{m!} i_m^! p^! \congto a_! \bigoplus_{m \in \bZ^n}  i_{m!} i_m^! p^! \congto a_! p^! \congto \pi^! \pi_!. $$
	Now note that the map $\bigoplus_{m \in \bZ^n} \Hom(\omega_{\bR^n}, \tau_{m!} \omega_{\bR^n}) \to \Hom(\omega_{\bR^n}, \bigoplus_{m \in \bZ} \tau_{m!} \omega_{\bR^n})$
	is an isomorphism since it may be computed in $\Loc(\bR^n)$, where $\omega_{\bR^n}$ is compact. The stated identification of $\End(\pi_! \omega_{\bR^n})$ now follows by adjunction, since $\Hom(\omega_{\bR^n}, \tau_{m!} \omega_{\bR^n}) \cong \bC$ for all $m$. An elaboration of this argument, which we omit, further identifies $\pi^! \pi_!$ and $\bigoplus_{m \in \bZ^n} \tau_{m!}$ as monads, hence $\End(\pi_! \omega_{\bR^n})$ and $R_n$ as algebras. 
\end{proof}

The choice of $\pi_! \omega_{\bR^n}$ (as opposed to, say, $\pi_! \bC_{\bR^n}$) is explained by the following fact. 

\begin{Proposition}[{\cite[Cor. 3.13]{FLTZ11},\cite[Prop. 9.4]{Kuw20}}]\label{prop:monoidalmirror}
The equivalence $\Hom(\pi_! \omega_{\bR^n}, -): \Loc(T^n) \congto \Mod(R_n)$ is symmetric monoidal with respect to convolution on $T^n$ and the tensor product of $R_n$-modules. In particular, $\pi_! \omega_{\bR^n}$ is the monoidal unit in $\Loc(T^n)$. 
\end{Proposition}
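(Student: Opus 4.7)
My plan is to establish the two claims of the proposition, namely that $\pi_!\omega_{\bR^n}$ is the monoidal unit and that the monodromy equivalence upgrades to a symmetric monoidal equivalence, in two stages. For the unit claim, I would begin with the direct computation that $\pi_!\omega_{\bR^n}$ is idempotent under convolution. Applying proper base change to the external product and the identity $a \circ (\pi \times \pi) = \pi \circ a_{\bR}$ (with $a_{\bR}: \bR^{2n} \to \bR^n$ the linear addition map), we obtain
\[
\pi_!\omega_{\bR^n} \conv \pi_!\omega_{\bR^n} = a_! (\pi \times \pi)_! \omega_{\bR^{2n}} \cong \pi_! (a_{\bR})_! \omega_{\bR^{2n}} \cong \pi_! \omega_{\bR^n},
\]
the last step because a linear change of coordinates $(x, y) \mapsto (x+y, y)$ realizes $a_{\bR}$ as projection onto a factor, and $p_! p^! \omega_{\bR^n} \cong \omega_{\bR^n}$ for such a projection with contractible fiber (via the projection formula). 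I would then bootstrap this to $\pi_!\omega_{\bR^n} \conv \cF \cong \cF$ for arbitrary $\cF \in \Loc(T^n)$. The endofunctor $\pi_!\omega_{\bR^n} \conv -$ is cocontinuous (convolution is built from $!$-pushforwards and external tensor), and since $\pi_!\omega_{\bR^n}$ corresponds to the free rank-one module $R_n$ under monodromy, it is a compact generator of $\Loc(T^n)$. A cocontinuous endofunctor of a compactly generated stable category that sends a compact generator to itself via an isomorphism respecting the $R_n = \End(\pi_!\omega_{\bR^n})$-action is canonically isomorphic to the identity.

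For the symmetric monoidal statement I would appeal to the universal property of $\Mod_{R_n}$: among presentable stable $\bC$-linear symmetric monoidal categories, it is the free one containing $n$ commuting invertible objects (equivalently, the free one on the commutative monoid $\bZ^n$ acting by tensoring with the $z_i$). A cocontinuous symmetric monoidal functor out of $\Mod_{R_n}$ is therefore determined by sending $R_n$ to the unit of the target, together with an algebra identification of $\End$ of that unit with $R_n$. In our setting Proposition \ref{prop:endcomp} furnishes exactly such a canonical identification $\End(\pi_!\omega_{\bR^n}) \cong R_n$, so we obtain a symmetric monoidal cocontinuous functor $\Mod_{R_n} \to (\Loc(T^n), \conv)$ sending $R_n \mapsto \pi_!\omega_{\bR^n}$, and by the first stage its underlying functor agrees (inverse to) the monodromy equivalence.

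The technical heart of the argument, and where I expect the most care is needed, is verifying that the algebra structure on $\End(\pi_!\omega_{\bR^n}) \cong R_n$ identified in Proposition \ref{prop:endcomp} actually coincides with the one induced by convolution (via Eckmann--Hilton identification of composition of unit endomorphisms with their convolution product). Concretely, one must check that the convolution of the translation isomorphisms $\omega_{\bR^n} \cong \tau_{m!}\omega_{\bR^n}$ and $\omega_{\bR^n} \cong \tau_{m'!}\omega_{\bR^n}$, viewed as endos of $\pi_!\omega_{\bR^n}$, equals the translation isomorphism indexed by $m + m'$. This reduces to a base-change computation analogous to the one in the first paragraph, now keeping track of which summand of the decomposition $\pi^!\pi_! \cong \bigoplus_{m \in \bZ^n} \tau_{m!}$ each factor contributes to, and then observing that convolution acts additively on these indices via the geometry of $a_{\bR}$.
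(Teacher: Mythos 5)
The paper does not give a proof of this proposition --- it cites \cite[Cor.~3.13]{FLTZ11} and \cite[Prop.~9.4]{Kuw20}, which realize the equivalence as an explicit Fourier--Mukai transform and verify monoidality at the level of kernels. Your route through a universal property of $\Mod_{R_n}$ is thus a genuinely different approach, not a reconstruction of an in-paper argument.

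However, the universal property you invoke is misstated in a way that breaks the argument. The $z_i$ are not invertible \emph{objects} of $(\Mod_{R_n},\otimes_{R_n})$: since $\mathrm{Pic}((\bC^\times)^n)$ is trivial, the invertible objects of that category are just shifts of $R_n$. Rather, the $z_i$ are invertible \emph{elements} of $\End(\mathbf 1)\cong R_n$, i.e.\ automorphisms of the unit. What you want is the statement that $\Mod_R$, for a commutative $\bC$-algebra $R$, corepresents $E_\infty$-algebra maps $R\to\End(\mathbf 1_\cC)$ among presentable stable $\bC$-linear symmetric monoidal $\cC$; Proposition~\ref{prop:endcomp} would then supply the required map. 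But to apply this to $\cC=(\Loc(T^n),\conv)$ you must first know that $\conv$ restricts to a unital, coherently associative symmetric monoidal structure on $\Loc(T^n)$ --- the very thing the paper establishes by a separate localization argument in Proposition~\ref{prop:monoidalwrapping}, which logically comes \emph{after} the present statement, so you cannot assume it. Your idempotence computation $\pi_!\omega_{\bR^n}\conv\pi_!\omega_{\bR^n}\cong\pi_!\omega_{\bR^n}$ is fine, but passing from $e\conv e\cong e$ to ``$e\conv-$ is the identity functor'' requires matching the two a priori different $R_n$-actions on $\pi_!\omega_{\bR^n}\conv\pi_!\omega_{\bR^n}$ (one from each convolution factor) and producing the unitor and coherence data; this is essentially the same issue as the Eckmann--Hilton compatibility you correctly flag as the crux and does not get resolved by the compact-generator argument alone. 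In short, the outline is reasonable, but both the universal property as stated and the unit verification need genuine repair before this is a proof.
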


We will also need the following results relating convolution and wrapping on~$T^n$. Here the convolution product on $\Sh(T^n)$ is defined by
$$ - \conv - := m_!(- \boxtimes -), $$
where $m: T^n \times T^n \to T^n$ is the multiplication map. 

\begin{Proposition}\label{prop:W_as_convolution}
There is a canonical isomorphism $(\wr \bC_{\{\pi(0)\}}) \conv (-) \cong \wr_{T^n}$. 
\end{Proposition}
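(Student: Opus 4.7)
The plan is to express both sides as kernel convolutions with sheaves on $T^n \times T^n$ and identify those kernels using Lemma~\ref{lem:W_and_upper*}.

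First, I would use Lemma~\ref{lem:W_as_kernel} to write $\wr_{T^n} \cong (\wr_{T^n \times T^n} \bC_\Delta) \circ (-)$, so the kernel realizing $\wr_{T^n}$ is $\wr_{T^n \times T^n} \bC_\Delta$. In parallel, I would recast ordinary convolution as a kernel operation: the self-diffeomorphism $\mu: T^n \times T^n \to T^n \times T^n$ given by $(g_1,g_2) \mapsto (g_1 + g_2, g_2)$ satisfies $p_1 \circ \mu = m$ and $p_2 \circ \mu = p_2$, so for any $K \in \Sh(T^n)$ and $\cF \in \Sh(T^n)$ one has
$$ K \conv \cF = m_!(K \boxtimes \cF) \cong p_{1!}(s^* K \otimes p_2^* \cF), $$
where $s: T^n \times T^n \to T^n$ is the difference map $s(g_1,g_2) = g_1 - g_2$. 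In other words, convolution with $K \in \Sh(T^n)$ coincides with kernel convolution by $s^*K \in \Sh(T^n \times T^n)$. Applied to $K = \wr \bC_{\{\pi(0)\}}$, this realizes $(\wr \bC_{\{\pi(0)\}}) \conv (-)$ as kernel convolution with $s^* \wr \bC_{\{\pi(0)\}}$.

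It remains to identify the two kernels. Since $s^{-1}(\pi(0)) = \Delta$ and $s$ is a local homeomorphism along the diagonal (in particular, $\Delta$ is an open submanifold of the preimage), one has the basic identification $\bC_\Delta \cong s^* \bC_{\{\pi(0)\}}$. The map $s$ is a submersion (it is a surjective Lie group homomorphism, or visibly splits via $g_1 \mapsto (g_1,0)$), so Lemma~\ref{lem:W_and_upper*} gives $\wr_{T^n \times T^n} s^* \cong s^* \wr_{T^n}$. Combining,
$$ \wr_{T^n \times T^n} \bC_\Delta \cong \wr_{T^n \times T^n} s^* \bC_{\{\pi(0)\}} \cong s^* \wr \bC_{\{\pi(0)\}}, $$
which is precisely the kernel associated to convolution with $\wr \bC_{\{\pi(0)\}}$. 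This proves the desired isomorphism $\wr_{T^n} \cong (\wr \bC_{\{\pi(0)\}}) \conv (-)$.

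There is no substantive obstacle here; the only care needed is to correctly match the two descriptions of convolution (choosing the sign convention for $s$ so that $s^* \bC_{\{\pi(0)\}} = \bC_\Delta$) and to check that $s$ is a submersion so that Lemma~\ref{lem:W_and_upper*} applies. Everything else is formal.
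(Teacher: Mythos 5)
Your argument is essentially the paper's: both realize $\wr_{T^n}$ via the kernel $\wr_{T^n\times T^n}\bC_\Delta$ (Lemma~\ref{lem:W_as_kernel}), both rewrite $K\conv(-)$ as kernel convolution with the pullback of $K$ under a difference map, and both conclude by matching the two kernels using Lemma~\ref{lem:W_and_upper*}. One caveat: your justification of $\bC_\Delta\cong s^*\bC_{\{\pi(0)\}}$ via ``$s$ is a local homeomorphism along the diagonal'' is wrong --- $s\colon T^n\times T^n\to T^n$ is a surjective submersion with $n$-dimensional fibers, not a local homeomorphism; the identification nonetheless holds because $\{\pi(0)\}$ is closed and $s^{-1}(\pi(0))=\Delta$, so one can simply compute stalks (or apply base change for the Cartesian square $\Delta\into T^n\times T^n$ over $\{\pi(0)\}\into T^n$).
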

\begin{proof}
Write $d: T^n \times T^n \to T^n$ for the map $(\theta_1, \theta_2) \mapsto \theta_2 - \theta_1$. Since $m = p_1 \circ (m \times \id)$ and since $(d \times \id) = (m \times \id)^{\shortminus 1}$, we have
\begin{align*}
\wr \bC_{\{\pi(0)\}} \conv (-) &\cong p_{1!} (m \times \id)_! (p_1^*\wr \bC_{\{\pi(0)\}} \otimes p_2^*(-))\\ 
&\cong p_{1!} (d \times \id)^* (p_1^*\wr \bC_{\{\pi(0)\}} \otimes p_2^*(-))\\ 
&\cong p_{1!} (d^*\wr \bC_{\{\pi(0)\}} \otimes p_2^*(-)).
\end{align*}
Noting that $d^* \bC_{\{\pi(0)\}}$ is the constant sheaf on the diagonal, it thus suffices by Lemma \ref{lem:W_as_kernel} to show that the canonical map $\wr d^* \bC_{\{\pi(0)\}} \to d^*\wr \bC_{\{\pi(0)\}}$ is an isomorphism. But this follows from Lemma \ref{lem:W_and_upper*}.
\end{proof}

\begin{Proposition}\label{prop:monoidalwrapping}
The restriction of the convolution product on $\Sh(T^n)$ to $\Loc(T^n)$ extends to a symmetric monoidal structure with unit $\wr \bC_{\{\pi(0)\}}$, and for which the functor $\wr_{T^n}$ is symmetric monoidal. 
\end{Proposition}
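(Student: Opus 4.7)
The approach is to descend the convolution symmetric monoidal structure from $\Sh(T^n)$ to $\Loc(T^n)$ via the general theory of reflective symmetric monoidal localizations. The relevant principle: if $i: \cC \hookrightarrow \cD$ is a fully faithful inclusion into a symmetric monoidal category with left adjoint $L$, and the class of morphisms in $\cD$ inverted by $L$ is closed under tensoring with arbitrary objects of $\cD$, then $\cC$ inherits a unique symmetric monoidal structure with unit $L(\mathbf{1}_\cD)$ and tensor product $L(i(-) \otimes i(-))$, and $L$ becomes symmetric monoidal.

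The first step will be to verify the tensor-ideal hypothesis. By Proposition \ref{prop:W_as_convolution}, $\wr_{T^n} \cong U \conv (-)$ for $U := \wr \bC_{\{\pi(0)\}}$, so a morphism $f$ in $\Sh(T^n)$ is a $\wr$-equivalence if and only if $U \conv f$ is an isomorphism. Associativity of convolution gives $U \conv (f \conv \cG) \cong (U \conv f) \conv \cG$ for any $\cG \in \Sh(T^n)$, which is an isomorphism whenever $f$ is a $\wr$-equivalence. Hence the class of $\wr$-equivalences is a $\conv$-tensor ideal, and the localization principle yields on $\Loc(T^n)$ a symmetric monoidal structure with unit $\wr \bC_{\{\pi(0)\}}$ and tensor product $\wr(i\cF \conv i\cG)$, with $\wr_{T^n}$ symmetric monoidal.

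It remains to verify that this descended structure extends the pointwise restriction of $\conv$. Since $m: T^n \times T^n \to T^n$ is a proper submersion (indeed a trivial $T^n$-bundle), proper base change shows that $m_!$ carries local systems to local systems; thus $\cF \conv \cG \in \Loc(T^n)$ already for $\cF, \cG \in \Loc(T^n)$, whence $\wr(i\cF \conv i\cG) \cong \cF \conv \cG$. The main technical ingredient is therefore Proposition \ref{prop:W_as_convolution}, which makes the tensor-ideal verification immediate; the remainder is a formal application of the localization principle.
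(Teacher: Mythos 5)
Your argument is correct and follows essentially the same route as the paper: both verify the tensor-ideal condition for $\wr$-equivalences via Proposition \ref{prop:W_as_convolution}, invoke the symmetric monoidal localization principle (the paper cites it explicitly as \cite[Ex.~2.2.1.7, Prop.~2.2.1.9]{LurHA}), and conclude by noting $\Loc(T^n)$ is closed under $\conv$ so that the descended product coincides with the restricted one. The only stylistic difference is that you supply a justification for that closure (the paper merely asserts it); note, though, that it is the local triviality of $m$ plus the K\"unneth/projection formula that does the work there, rather than proper base change alone, which only computes stalks.
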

\begin{proof}
Suppose that $\cF, \cG, \cH\in \Sh(T^n)$, and that $\phi: \cF \to \cG$ is such that $\wr\phi: \wr \cF \to \wr \cG$ is an isomorphism. By Proposition \ref{prop:W_as_convolution} we can identify $\wr\phi$ with $\id_{\wr \bC_{\{\pi(0)\}}} \conv \phi$ and $\wr(\phi \conv \id_{\cH})$ with $ \id_{\wr \bC_{\{\pi(0)\}}} \conv \phi \conv  \id_{\cH}$, hence $\wr(\phi \conv \id_{\cH})$ is also an isomorphism. It follows from \cite[Ex. 2.2.1.7, Prop. 2.2.1.9]{LurHA} that $\Loc(T^n)$ carries a unique symmetric monoidal structure such that $\wr_{T^n}$ is symmetric monoidal and $\inclarg{T^n}$ is lax symmetric monoidal. But $\Loc(T^n)$ is closed under convolution in $\Sh(T^n)$, hence the product for this symmetric monoidal structure is the restriction of the convolution product and its unit is $\wr \bC_{\{\pi(0)\}}$. 
\end{proof}

Note that the isomorphism $\wr_{T^n}(- \conv -) \cong \wr_{T^n} (-) \conv \wr_{T^n} (-)$ follows immediately from Proposition \ref{prop:W_as_convolution} and the elementary calculation that $\wr \bC_{\{\pi(0)\}} \cong \wr \bC_{\{\pi(0)\}} \conv \wr \bC_{\{\pi(0)\}}$. In particular, the role of \cite[Prop. 2.2.1.9]{LurHA} is merely to provide the needed coherence data. Also note that Propositions \ref{prop:monoidalmirror} and \ref{prop:monoidalwrapping} together imply $\wr \bC_{\{\pi(0)\}} \cong \pi_! \omega_{\bR^n}$; this will also follow from the more general calculation of Lemma \ref{lem: wrapping-of-contractbles1}. 

Finally, we will need the essentially standard fact that $\Sh^b(T^n)$ is rigid as a symmetric monoidal category, i.e. every object is dualizable with respect to the convolution product. We include a proof for convenience --- note that, by contrast, a constructible sheaf on a noncompact group such as $\bR^n$ is not necessarily dualizable \cite[Lem. 7.4]{FLTZ11}. Below we write $\cF \mapsto \shortminus \cF$ for the autoequivalence of $\Sh^b(T^n)$ given by pullback along $\theta \mapsto \shortminus \theta$.

\begin{Proposition}\label{prop:rigidity}
The symmetric monoidal category $\Sh^b(T^n)$ is rigid, and the dual of $\cF \in \Sh^b(T^n)$ is $\shortminus \bD \cF$. 
\end{Proposition}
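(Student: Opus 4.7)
The plan is to exhibit $\cF^\vee := \shortminus \bD\cF$ as the dual of $\cF$ with respect to convolution by establishing a natural adjunction
\begin{equation*}
\Hom_{\Sh^b(T^n)}(\cG \conv \cF, \cH) \;\cong\; \Hom_{\Sh^b(T^n)}(\cG, (\shortminus\bD\cF) \conv \cH),
\end{equation*}
which characterizes $\shortminus\bD\cF$ as both a left and a right dual of $\cF$ (these coincide in a symmetric monoidal category), hence as the dual in the rigid sense. The structural inputs are: $T^n$ is compact, so $m: T^n \times T^n \to T^n$ is proper and $m_! \cong m_*$; $T^n$ is orientable of dimension $n$, so $m$ is a submersion with orientable fiber and $m^! \cong m^*[n]$; and $T^n$ is abelian, so the inversion $\theta \mapsto \shortminus\theta$ is a group automorphism satisfying $m \circ (\shortminus \times \shortminus) = \shortminus \circ m$, showing $\cF \mapsto \shortminus \cF$ is monoidal for $\conv$.

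Combined with the Kunneth formula $\bD(\cF \boxtimes \cG) \cong \bD\cF \boxtimes \bD\cG$ and the identity $\bD(\shortminus \cF) \cong \shortminus \bD\cF$ (from inversion being a diffeomorphism preserving $\omega_{T^n}$), these facts imply that Verdier duality is contravariantly monoidal for convolution: $\bD(\cF \conv \cG) \cong \bD m_!(\cF \boxtimes \cG) \cong m_!(\bD\cF \boxtimes \bD\cG) \cong \bD\cF \conv \bD\cG$. For the adjunction itself, I would start from $m_! \dashv m^!$, substitute $m^! \cong m^*[n]$, and apply the tensor-hom adjunction to get $\Hom(\cG \conv \cF, \cH) \cong \Hom(\cG, p_{1*}\sHom(p_2^*\cF, m^*\cH[n]))$. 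It then suffices to identify $p_{1*}\sHom(p_2^*\cF, m^*\cH[n]) \cong (\shortminus\bD\cF) \conv \cH$.

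This last identification can be established using the diffeomorphism $\phi: (\theta_1, \theta_2) \mapsto (\theta_1, \theta_2 - \theta_1)$ of $T^n \times T^n$; under $\phi$, $p_1$ is preserved, $m$ becomes $p_2$, and $p_2$ becomes the difference map $d(\theta_1, \theta_2) = \theta_2 - \theta_1$. After this substitution, the resulting expression is rewritten using standard Verdier formulas and the identity $(\shortminus\bD\cF) \conv \cH \cong d_!(\bD\cF \boxtimes \cH)$, itself a routine change of variables using that inversion is a symmetric monoidal equivalence. The main obstacle will be the careful bookkeeping of shifts (from $m^! \cong m^*[n]$) and of the inversion $\shortminus$ that arises when converting $m$-pullback to $p_2$-pullback: these must combine to produce exactly $\shortminus\bD\cF$ rather than an unshifted or unsigned variant. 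Once the adjunction is verified, rigidity of $\Sh^b(T^n)$ and the explicit form $\cF^\vee \cong \shortminus\bD\cF$ follow at once; a useful byproduct is the identification of the convolution-internal hom $\sHom_\conv(\cF, \wr\bC_{\{\pi(0)\}}) \cong \shortminus\bD\cF$.
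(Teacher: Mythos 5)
Your route is genuinely different from the paper's. You propose to construct, by explicit six-functor manipulation, a natural isomorphism $\Hom(\cG \conv \cF, \cH) \cong \Hom(\cG, (\shortminus\bD\cF) \conv \cH)$, and to read off rigidity from it. The paper instead invokes the abstract criterion (from Gaitsgory--Rozenblyum) that $\cF$ is dualizable if and only if the \emph{canonical} projection-formula map $\lambda_\cF^R(-) \conv \cG \to \lambda_\cF^R(- \conv \cG)$ is an isomorphism, where $\lambda_\cF^R$ is the right adjoint to $\cF \conv -$; it then factors this specific natural transformation into six steps and checks each one using proper base change, the tensor--Hom and K\"unneth compatibilities, and compactness of $T^n$. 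Only after dualizability is established does it identify the dual as $\shortminus\bD\cF$ via an argument from FLTZ. Your computational core is sound: the identity $\pi_{1*}\cHom(\pi_2^*\cF, m^!\cH) \cong d_!(\bD\cF \boxtimes \cH)$ does hold (after applying $\bD$ to both sides it reduces, by the shear $(\theta_1,\theta_2)\mapsto(\theta_2-\theta_1,\theta_2)$, to a tautology), and compactness of $T^n$ enters exactly where you expect, to convert $*$- into $!$-pushforwards.

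There is one gap to be aware of, and it is precisely the one the paper's choice of criterion sidesteps. Exhibiting \emph{some} natural isomorphism $\Hom(\cG \conv \cF, \cH) \cong \Hom(\cG, \cF^\vee \conv \cH)$ shows that the right adjoint of $- \conv \cF$ is abstractly isomorphic to the functor $\cF^\vee \conv -$, but this does not by itself imply dualizability: the criterion requires the \emph{canonical} lax-structure map $\lambda_\cF^R(\mathbf 1) \conv \cH \to \lambda_\cF^R(\cH)$ to be an isomorphism, and an abstract identification of the two functors need not transport to the identity under this comparison. (Compare: in a closed monoidal category $-\otimes\cF$ always has a right adjoint $\cHom(\cF,-)$, yet $\cF$ need not be dualizable.) To close the gap you would either (a) check that your explicit isomorphism is compatible with the $\Sh^b(T^n)$-module structure on both sides, i.e., that its counit $\lambda_\cF^R(\cH)\conv\cF\to\cH$ factors as swap followed by $\mathrm{ev}\conv\id_\cH$ where $\mathrm{ev}$ is the counit at $\cH=\mathbf 1$, or (b) follow the paper and directly verify the canonical criterion. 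Your proposal also stops short of the bookkeeping you flag as the ``main obstacle''; as the argument stands it is a correct plan rather than a complete proof, though the plan is right and all the structural facts you list ($m$ proper and submersive, inversion a monoidal automorphism, duality compatible with $\boxtimes$) are exactly the right inputs.
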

\begin{proof}
Given $\cF \in \Sh^b(T^n)$, write $\lambda_\cF: \Sh^b(T^n) \to \Sh^b(T^n)$ for $\cF \conv - \cong m_!(\cF \boxtimes -)$. It has a right adjoint $\lambda_\cF^R \cong p_{2*} \cHom(p_1^*(\cF), -)m^!$, where $p_1, p_2, m: T^n \times T^n \to T^n$ are the left/right projections and the multiplication map. For any $\cG \in \Sh^b(T^n)$, the isomorphism $(\cF \conv -) \conv \cG \cong \cF \conv (- \conv \cG)$ induces a map $\lambda_\cF^R(-) \conv \cG \to \lambda_\cF^R(- \conv \cG)$. The dualizability of $\cF$ is equivalent to this map being is an isomorphism (\cite[Lem. I.1.9.1.5]{GR17}, see also \cite[Lem. 6.2]{CW2}). By construction, it factors as the following composition of natural maps, where e.g. $m_{23}: T^n \times T^n \times T^n \to T^n \times T^n$ denotes multiplication of the second and third factors. 
\begin{align*}
m_! (- \boxtimes \cG) p_{2*} \cHom(p_1^*\cF,-)m^! & \to m_!  p_{23*}(- \boxtimes \cG)  \cHom(p_1^*\cF,-)m^! \\
& \to m_!  p_{23*} \cHom(p_1^*\cF,-)(- \boxtimes \cG) m^! \\
& \to m_!  p_{23*} \cHom(p_1^*\cF,-)m_{12}^! (- \boxtimes \cG)  \\
& \to p_{2*} m_{23!} \cHom(p_1^*\cF,-)m_{12}^! (- \boxtimes \cG)  \\
& \to p_{2*} \cHom(p_1^*\cF,-) m_{23!} m_{12}^! (- \boxtimes \cG)  \\
& \to p_{2*} \cHom(p_1^*\cF,-) m^! m_! (- \boxtimes \cG). 
\end{align*}
The first map is an isomorphism by proper base change and \cite[Prop. 2.6.6]{KS94}, the second is by \cite[Prop. 3.4.4, Prop. 3.4.6]{KS94} (cf. \cite[Cor. 2.9.6]{Ach21}), and the third is by \cite[Prop. 3.3.2]{KS94} since $m$ is a submersion. The fourth map is an isomorphism since $T^n$ is compact, the fifth is by \cite[eq. 2.6.15]{KS94}, and the last is by proper base change. Given that $\cF$ is dualizable, its dual is necessarily $\lambda_\cF^R(\bC_{\{\pi(0)\}})$, which is isomorphic to $\shortminus \bD \cF$ by the argument of \cite[Thm. 7.3]{FLTZ11}. 
\end{proof}

\subsection{Sheaf quantization}\label{sec:sheafquant} Let $M$ be a manifold and $L \subset T^* M$ an embedded exact Lagrangian. If $L$ is equipped with suitable brane data, one can associate to it a constructible sheaf $Q(L)$ on $M$. Such sheaf quantizations of $L$ have been considered by various authors from different points of view \cite{NZ09,GKS12,Gui16,JT17,Vit19,GPS18,Li23}. In Section~\ref{sec:branes} we will consider the sheaf quantizations of certain Lagrangian branes in the formalism of \cite{JT17}, the details of which we briefly recall here. 

We assume that $L$ is eventually conical, i.e. outside a fiberwise compact subset it coincides with an $\bR_+$-invariant Lagrangian. Following \cite[Sec. 3.6]{JT17}, up to contractible choices there is a unique perturbation $L'$ of $L$ which is lower exact, i.e. the restriction of the Liouville form to $L$ has a primitive $f$ which is proper and bounded above. The perturbation $L'$ lifts to an embedded Legendrian in $S^*(M \times \bR)$, canonical up to translation in the $\bR$-direction. We obtain a category $\mu sh(L')$ of microlocal sheaves or brane structures on $L'$, which is locally (but not globally) the quotient of $\Sh_{L'}(M \times \bR)$ by $\Loc(M \times \bR)$ (\cite[Sec. 3.11]{JT17} following \cite[Sec. 6]{KS94}, see also \cite[Rem. 3.6]{CL24}). The category $\mu sh(L')$ is locally equivalent to $\Loc(L')$, but the two categories are not always globally equivalent. 

Write $\Sh^0_{L'}(M \times \bR) \subset \Sh_{L'}(M \times \bR)$ for the subcategory of sheaves vanishing on $M \times (R, \infty)$ for $R \gg 0$. Also write $\Lambda \subset S^* M$ for the asymptotic boundary of $L$. Following \cite[Sec. 3.18]{JT17}, we have a canonical functor
\begin{equation}\label{eq:sheafquant}
\mu sh(L') \xrightarrow{\sim} \Sh^0_{L'}(M \times \bR) \xrightarrow{p_*} \Sh_{\Lambda}(M),
\end{equation} 
where the left functor is the inverse of microlocalization and the right is projection to $M$. This projection is fully faithful if $\Lambda$ is smooth. A Lagrangian brane supported on $L$ will for us mean $L$ together with the data of an object of $\mu sh(L')$. If suitable obstructions vanish, the latter can be identified with an object of $\Loc(L') \cong \Loc(L)$. Given such data we write $Q(L) \in \Sh_\Lambda(M)$ for the object arising from the composition (\ref{eq:sheafquant}).

We caution that while all notions of sheaf quantization in the above references should agree, we are not aware of this having been documented in general. In particular, in the introduction we mentioned as motivation that $WQ(L) \cong L$ under the equivalence $\Loc^c(M) \cong \cW(T^* M)$. This follows from results of \cite{GPS18} in the framework described there, in which we would define $Q(L)$ as the image of a negative Reeb perturbation of $L$ under their equivalence $\Sh_{\Lambda}^c(M) \cong \cW(T^* M; \Lambda)$. We do not know a comparison result between this quantization and that of \cite{JT17}, but we will also never actually use the identification $WQ(L) \cong L$. 

\subsection{Real nearby cycles}

Given a manifold $M$ and $\cF_{\opint} \in \Sh(M \times \opint)$, we write $\cF_t$ for the restriction of $\cF_{\opint}$ to $M \times \{t\}$. One can make sense of the limit of the $\cF_t$ as $t \to 0$ as the (real) nearby cycles (or specialization) of $\cF_{\opint}$, defined as
$$ \psi \cF_{\opint} := i^* j_{*} \cF_{\opint}.$$
Here $i: M \times \{0 \} \into M \times \clint$ and $j: M \times \opint \into M \times \clint$ are the inclusions.

In Section \ref{sec:branes} we will need the following facts about nearby cycles. Consider the following diagram of Cartesian squares, where $a: Z \into M \times \opint$ is a closed subset and $\ol{Z}$ its closure in $M \times \clint$. 
\begin{equation}\label{eq:Zsquares}
	\begin{tikzpicture}
		[baseline=(current  bounding  box.center),thick,>=\arrtip]
		\node (aa) at (0,0) {$Z_0$};
		\node (ab) at (3.5,0) {$\ol{Z}$};
		\node (ac) at (7,0) {$Z$};
		\node (ba) at (0,-1.5) {$M \times \{0\}$};
		\node (bb) at (3.5,-1.5) {$M \times \clint$};
		\node (bc) at (7,-1.5) {$M \times \opint$.};
		\draw[->] (aa) to node[above] {$i' $} (ab);
		\draw[<-] (ab) to node[above] {$j' $} (ac);
		\draw[->] (ba) to node[above] {$i $} (bb);
		\draw[<-] (bb) to node[above] {$j $} (bc);
		\draw[->] (aa) to node[left] {$a_0 $}(ba);
		\draw[->] (ab) to node[right] {$\ol{a} $} (bb);
		\draw[->] (ac) to node[right] {$a$} (bc);
	\end{tikzpicture}
\end{equation}

\begin{Lemma}\label{lem:vancycles}
	Suppose that in (\ref{eq:Zsquares}) the natural map $\bC_{\ol{Z}} \to j'_* \bC_Z$ is an isomorphism. Then we have $\psi a_* \bC_Z \cong a_{0*} \bC_{Z_0}$. 
\end{Lemma}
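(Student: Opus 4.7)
The plan is to unwind the definition of the nearby cycles functor and repeatedly use compatibilities of the six operations with the closed embeddings $a$, $\ol{a}$, and $a_0$, together with the given hypothesis. By definition we have
$$\psi a_* \bC_Z \;=\; i^* j_* a_* \bC_Z.$$
Since $j \circ a = \ol{a} \circ j'$ as maps $Z \to M \times \clint$ and pushforward along a composition is the composition of pushforwards, this becomes $i^* \ol{a}_* j'_* \bC_Z$. Applying the hypothesis $j'_* \bC_Z \cong \bC_{\ol{Z}}$, it rewrites as $i^* \ol{a}_* \bC_{\ol{Z}}$.

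The final step is to move $i^*$ past $\ol{a}_*$. For this I would invoke proper base change applied to the left Cartesian square in (\ref{eq:Zsquares}). The key observation is that $\ol{a}$ is a closed embedding: $\ol{Z}$ is by construction closed in $M \times \clint$. Hence $\ol{a}$ is proper, and proper base change gives a canonical isomorphism $i^* \ol{a}_* \cong a_{0*} (i')^*$. Combined with $(i')^* \bC_{\ol{Z}} = \bC_{Z_0}$, we conclude
$$\psi a_* \bC_Z \;\cong\; a_{0*} \bC_{Z_0},$$
as desired.

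The argument is essentially formal; the only point that needs a moment's thought is that $\ol{a}$ is proper so that proper base change applies in the $\infty$-categorical formalism of \cite{Jin24,Volpe-six-operations}. This is immediate from the fact that closed embeddings are proper. I do not anticipate any real obstacle.
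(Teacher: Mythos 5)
Your proof is correct and follows essentially the same route as the paper: rewrite $j_* a_* = \ol{a}_* j'_*$, apply the hypothesis, then use base change along the left Cartesian square (which applies since $\ol{a}$ is a closed embedding, hence proper).
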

\begin{proof}
	Equivalently, the hypothesis states that the natural map $\ol{a}_* \bC_{\ol{Z}} \to j_* a_* \bC_{Z}$ is an isomorphism. But then
	$$ i^* j_* a_* \bC_{Z} \cong i^* \ol{a}_* \bC_{\ol{Z}} \cong a_{0*} \bC_{Z_0},$$
	where the right isomorphism follows by base change, since the vertical maps are closed embeddings. 
\end{proof}

The hypothesis of Lemma \ref{lem:vancycles} should be understood as a tameness condition on the topology of $Z$ near $M \times \{0\}$. A nonexample is provided by $M = \bR$ and $Z = \{(\sin(\frac1x),x)\}_{x \in \opint}$, in which case $j'_* \bC_Z$ has infinite-dimensional stalks along $\ol{Z} \smallsetminus Z$. In particular, $j'_* \bC_Z$ is not constructible even though $\bC_Z$ is, providing a nonexample for the following lemma as well. 

\begin{Lemma}\label{lem:verdiervan}
	Suppose that $\cF \in \Sh(M \times \opint)$ is constructible and so is $j_! \bD \cF$.  Then we have $\psi \bD \cF \cong (\bD \psi \cF) [1]$. 
\end{Lemma}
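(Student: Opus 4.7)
The plan is to split the proof into a purely formal recollement identity and the compatibility of Verdier duality with $j_*$ and $j_!$, with the hypotheses entering only in the latter.

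First, I would establish the general identity
\[ \psi \cG \cong i^! j_! \cG [1] \]
for any $\cG \in \Sh(M \times \opint)$. This follows from the two recollement triangles $j_! j^* \to \id \to i_* i^* \to$ and $i_* i^! \to \id \to j_* j^* \to$ on $\Sh(M \times \clint)$. Evaluated on $j_* \cG$ and on $j_! \cG$ respectively, and using $j^* j_! \cong j^* j_* \cong \id$, these produce two distinguished triangles
\[ j_! \cG \to j_* \cG \to i_* \psi \cG \to, \qquad i_* i^! j_! \cG \to j_! \cG \to j_* \cG \to \]
sharing the middle map $j_! \cG \to j_* \cG$. Their third terms therefore agree up to shift, yielding $i_* \psi \cG \cong i_* i^! j_! \cG [1]$; applying $i^*$ gives the claim.

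Next, I would compute $\bD \psi \cF = \bD i^* j_* \cF$ using Verdier duality. Since $i$ is a closed embedding, $\bD i^* \cong i^! \bD$ is automatic. For the open embedding $j$ the claim is that $\bD j_* \cF \cong j_! \bD \cF$. The hypothesis that $j_! \bD \cF$ is constructible implies, via $\bD^2 \cong \id$ on constructible sheaves, that $j_* \cF \cong \bD(j_! \bD \cF)$ is also constructible, placing both $j_* \cF$ and $j_! \bD \cF$ inside the subcategory where the standard duality interchange $\bD j_* \cong j_! \bD$ applies. Combining, $\bD \psi \cF \cong i^! j_! \bD \cF$, and substituting $\cG = \bD \cF$ in the recollement identity gives $\psi \bD \cF \cong i^! j_! \bD \cF [1] \cong (\bD \psi \cF)[1]$.

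The main obstacle is justifying the interchange $\bD j_* \cong j_! \bD$ as applied to $\cF$. In the classical bounded constructible derived category this is a routine consequence of $\bD^2 \cong \id$ and the $j^* \dashv j_*$ adjunction, but in the unbounded $\infty$-categorical framework used by the paper one must remain within the constructible subcategory for duality to behave well; the hypothesis that $j_! \bD \cF$ be constructible is exactly the condition that keeps us there, and making this precise will require invoking the six-functor formalism referenced earlier in the paper.
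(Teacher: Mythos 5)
Your proof is correct and is in substance the same as the paper's. Both hinge on the triangle $j_! \cF \to j_* \cF \to i_* i^* j_* \cF$ and on the interchange $\bD j_* \cF \cong j_! \bD \cF$, which is precisely where the constructibility hypothesis on $j_! \bD \cF$ enters. Where the paper applies $\bD$ directly to that triangle, identifies each term via the interchange, and compares cones of the shared comparison map $j_! \bD\cF \to j_* \bD\cF$, you factor out the formal recollement identity $\psi \cG \cong i^! j_! \cG[1]$ as a standalone step and then feed the duality interchanges into it. This is a harmless and arguably cleaner repackaging, with the minor bonus that the intermediate identity holds for arbitrary $\cG$, not just constructible ones. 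One small inaccuracy worth fixing: the interchange $\bD i^* \cong i^! \bD$ that you invoke is automatic for any morphism, not only closed embeddings --- it follows from $f^! \cHom(\cG,\cH) \cong \cHom(f^*\cG, f^!\cH)$ together with $f^! \omega_Y \cong \omega_X$. The interchanges that genuinely require constructibility (so that $\bD^2 \cong \id$) are $\bD f_* \cong f_! \bD$ and $f^* \bD \cong \bD f^!$, and you correctly treat the first of these as the crux of the argument.
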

\begin{proof}
	Noting that $j_! j^! j_* \cF \cong j_! \cF$ since $j$ is open, we have a triangle
	\begin{equation}\label{eq:verdiervan1} j_! \cF \to j_* \cF \to i_* i^* j_* \cF. \end{equation}
	Since $\cF$ and $j_! \bD \cF$ are constructible we have $\bD j_* \cF \cong j_! \bD \cF$ \cite[Ex. VIII.3]{KS94}, 
	 hence applying~$\bD$ to (\ref{eq:verdiervan1}) we obtain 
	\begin{equation}\label{eq:verdiervan2} i_* \bD i^* j_* \cF \to j_! \bD \cF \to j_* \bD \cF. \end{equation}
	But the second map in (\ref{eq:verdiervan2}) is the first map in (\ref{eq:verdiervan1}) with $\bD \cF$ in place of $\cF$, hence the cone over it is $i_* i^* j_* \bD \cF$. Thus $i_* i^* j_* \bD \cF \cong (i_* \bD i^* j_* \cF)[1]$, and applying $i^*$ we obtain the desired isomorphism. 
\end{proof}

\section{Coamoebae from resolutions}\label{sec:restoco}

In this section we associate a simplicial complex $X(F^\bullet)$ over $T^n$ to a bounded complex $F^\bullet$ of free $R_n$-modules (Definition \ref{def:XFbullet}). We show that the discrete information in $F^\bullet$ can be recovered from $X(F^\bullet)$ (Theorem \ref{thm:recovery}), and in common situations from its image $T(F^\bullet) \subset T^n$ (Theorem~\ref{thm:recovery2}). A linearly immersed graph arises from the construction if and only if it is bipartite, and more generally we characterize the $k$-dimensional simplicial complexes arising from this construction as those containing certain ``generating'' subgraphs with vertices colored by~$[\shortminus k, 0] \subset \bZ$ (Proposition \ref{prop:characterization}). 

More precisely, we fix throughout a bounded sequence
$$ F^\bullet = \cdots \to F^k \xrightarrow{d^k} F^{k+1} \to \cdots $$
of based finite-rank free $R_n$-modules and homomorphisms (recall that $R_n := \bC[z_1^{\pm 1},  \dotsc, z_n^{\pm 1}]$). The given modules being based means that for each $k$ we have fixed a finite set $I_k$ and an isomorphism $F^k \cong R^{\,\oplus I_k}$. We write $I$ for the disjoint union of the $I_k$, and define $\deg: I \to \bZ$ so that $\deg(i) = k$ if $i \in I_k$. The term sequence indicates that we do not actually require $d^2 = 0$, since the constructions in this section will not depend on this. For convenience, however, we do assume that $F^0 \ncong 0$ and that $F^k \cong 0$ for $k > 0$. 

Each $d^k$ may be represented as an $R_n$-valued $I_{k+1} \times I_k$ matrix. We write $d^k_{ij} \in R_n$ for the entry in row $i \in I_{k+1}$ and column $j \in I_k$. To describe our main construction, it will be convenient to package the exponents appearing in these entries as follows. 

\begin{Definition}\label{def:Mij}
	For each $i,j \in I$ we define $\Exp_{ij} \subset \bZ^n$ as follows.
	\begin{itemize}
		\item If $\deg(i) \leq \deg(j)$, then $\Exp_{ij}$ is empty.
		\item If $\deg(i) = \deg(j) + 1$, then $\Exp_{ij}$ is determined by the condition that $d^{\,\deg(j)}_{ij} = \sum_{m \in \Exp_{ij}} c_{ijm} z^m$ with $c_{ijm} \neq 0$ for all $m \in \Exp_{ij}$. 
		\item If $\deg(i) = \deg(j) + k$ for $k > 1$, then $m \in \Exp_{ij}$ if $m = \sum_{\ell = 1}^{k} m_\ell$ for some $i_1, \dotsc, i_{k-1} \in I$ and some $(m_1, \dotsc, m_{k}) \in \Exp_{i_1 j} \times \Exp_{i_2 i_1} \times \cdots \times \Exp_{i i_{k-1}}$ such that $\deg(i_\ell) = \deg(j) + \ell$ for $1 \leq \ell < k$. 
	\end{itemize}
\end{Definition}

Informally, if $\deg{i} > \deg{j} + 1$ then $\Exp_{ij}$ is the set of all exponents that appear when computing the matrix entry $(d^{\,\deg(i)-1} \cdots d^{\,\deg(j)})_{ij}$, even if all monomials with such an exponent are canceled out in the final result (as happens in particular when $F^\bullet$ is a cochain complex). 

A semi-simplicial complex over $T^n$ is a collection $X = \{X_k\}_{k \in \bN}$, where each $X_k$ is a set of maps $\sigma: \Delta^k \to T^n$ whose restrictions to each $m$-dimensional face belong to $X_m$. We say $X$ is immersed if for all $k \in \bN$ and $\sigma \in X_k$ the restriction of $\sigma$ to the interior of $\Delta^k$ is an embedding. We say $X$ is linear if each $\sigma$ is linear, i.e. it is the restriction of $\pi: \bR^n \to T^n$ to the convex hull of an ordered tuple of points in $\bR^n$. Given such a tuple $y_0, \dotsc, y_k \in \bR^n$, we write $[y_0, \dotsc, y_k]$ for its convex hull and $[y_0, \dotsc, y_k] \!\!\mod \bZ^n$ for the associated simplex over~$T^n$. 

\begin{Definition} \label{def:XFbullet}
	We write $X(F^\bullet) = \{X(F^\bullet)_k\}_{k \in \bN}$ for the following collection of simplices over $T^n$. First we choose a point $x_i \in \bR^n$ for all $i \in I$. Then we set 
	\begin{equation*} 
		X(F^\bullet)_k := \bigcup_{i_0, \dotsc, i_k \in I}\{[x_{i_0}, x_{i_1} + m_1, \dotsc, x_{i_k} + \sum_{\ell = 1}^{k} m_\ell] \!\!\!\!\mod \bZ^n\}_{ (m_1,\dotsc,m_k) \in \Exp_{i_1 i_0} \times \cdots \times \Exp_{i_k i_{k\shortminus 1}}},
	\end{equation*}
	the $k = 0$ case being interpreted as
	\begin{equation*}
		X(F^\bullet)_0 := \bigcup_{i_0 \in I}\{[x_{i_0}] \!\!\!\!\mod \bZ^n\}.
	\end{equation*}
We write  $T(F^\bullet) \subset T^n$ for the image of $X(F^\bullet)$. 
\end{Definition} 

The collection $X(F^\bullet)$ has the following properties. Here and below when the $x_i$ being generic appears as a hypothesis, we mean that the set of choices of $x_i$ where the conclusion does not hold is a measure zero subset of $(\bR^n)^I$. 

\begin{Proposition}\label{prop:immersedsimpset}
	The collection $X(F^\bullet)$ is a linear semi-simplicial set over~$T^n$. If the $x_i$ are generic and $F^{\shortminus k} \cong 0$ for $k \geq n$, then $X$ is immersed. 
\end{Proposition}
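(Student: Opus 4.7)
The plan is to verify the two assertions separately, with the semi-simplicial claim being essentially combinatorial and the immersedness claim a genericity argument.

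For the first assertion, linearity is immediate from Definition~\ref{def:XFbullet}. To see $X(F^\bullet)$ is a semi-simplicial set I would verify that for any $\sigma = [x_{i_0}, x_{i_1} + m_1, \dots, x_{i_k} + \sum_{\ell=1}^k m_\ell] \!\!\mod \bZ^n$ in $X_k$, its $j$-th face lies in $X_{k-1}$. The case $j = k$ is immediate. For $j = 0$, I translate the face by $\shortminus m_1 \in \bZ^n$ (permissible since we work modulo $\bZ^n$) to exhibit it in the required form with index sequence $i_1, \dots, i_k$ and differences $m_2, \dots, m_k$. For $0 < j < k$, the face has consecutive differences $m_1, \dots, m_{j-1}, m_j + m_{j+1}, m_{j+2}, \dots, m_k$ and index sequence $i_0, \dots, i_{j-1}, i_{j+1}, \dots, i_k$; the only nontrivial check is that $m_j + m_{j+1} \in \Exp_{i_{j+1} i_{j-1}}$, which is precisely the $k=2$ case of the inductive clause in Definition~\ref{def:Mij}.

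For the second assertion, the hypothesis $F^{\shortminus k} \cong 0$ for $k \geq n$ forces $\deg$ to take values in $\{\shortminus(n\shortminus 1), \dots, 0\}$. Since $\Exp_{ij} \neq \emptyset$ requires $\deg(i) > \deg(j)$, the degrees along the vertices of any simplex in $X_k$ are strictly increasing, so $k \leq n-1$. Fix a $k$-simplex $\sigma \in X_k$ and lift it to vertices $v_0, \dots, v_k \in \bR^n$ with $v_\ell = x_{i_\ell} + \sum_{j=1}^\ell m_j$. I would then reduce immersedness to two conditions: (a) the vectors $v_\ell - v_{\ell-1} = (x_{i_\ell} - x_{i_{\ell-1}}) + m_\ell$ for $\ell = 1, \dots, k$ are linearly independent, so $\sigma$ is a nondegenerate simplex in $\bR^n$; and (b) the $k$-dimensional linear span $V \subset \bR^n$ of these vectors satisfies $V \cap \bZ^n = \{0\}$. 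Condition (b) suffices for injectivity of $\pi$ on the open interior because any two points of $\sigma^\circ \subset v_0 + V$ differ by an element of $V$.

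The finiteness of $I$ and of each $\Exp_{ij}$ (the latter since matrix entries of $F^\bullet$ are Laurent polynomials with finitely many terms) ensures each $X_k$ is finite, so it suffices to verify (a) and (b) generically for a single simplex at a time. Condition (a) fails on a proper Zariski closed (hence measure zero) subset of $(\bR^n)^I$ because $k < n$ means the $k$ vectors above are generically independent. For (b), for each fixed nonzero $m \in \bZ^n$ the condition $m \in V$ cuts out a proper algebraic subvariety of $(\bR^n)^I$ (the set of $k$-planes through $m$ has codimension $n - k \geq 1$ in the relevant Grassmannian, and this pulls back nontrivially since the $x_i$ contribute independent perturbations to the generators of $V$); the countable union over $m \in \bZ^n \smallsetminus \{0\}$ and the finite union over simplices remains measure zero. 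Outside this bad set, every simplex is immersed. The main delicate point is verifying condition (b): one must confirm that the map from $(\bR^n)^I$ to $\mathrm{Gr}(k, n)$ sending $(x_i)$ to $V$ is sufficiently nondegenerate that the preimage of the positive-codimension subvariety $\{V : m \in V\}$ remains of positive codimension, which follows once one observes that, with the $m_\ell$ fixed, varying any single $x_{i_\ell}$ already sweeps out all directions in $\bR^n$.
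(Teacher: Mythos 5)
Your proof is correct and follows essentially the same strategy as the paper's: check face maps directly for the semi-simplicial claim, then for immersedness reduce to the finitely many simplices, then to the countably many lattice vectors $m$, and observe that for each fixed $m \neq 0$ the condition ``$m$ lies in the $k$-plane of the simplex'' is measure zero because $k < n$. Your repackaging via condition (b), that $V \cap \bZ^n = \{0\}$, is a slightly cleaner sufficient condition than the paper's (which directly intersects the open simplex with its translates), but the genericity argument underlying it is identical; the observation that varying a single $x_{i_\ell}$ sweeps out $\bR^n$ implicitly uses that the $i_\ell$ are pairwise distinct, which holds because their degrees are strictly increasing along a simplex --- worth stating explicitly, though the paper also leaves it implicit.

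One small imprecision in the semi-simplicial part: you claim that $m_j + m_{j+1} \in \Exp_{i_{j+1}i_{j-1}}$ ``is precisely the $k=2$ case of the inductive clause in Definition~\ref{def:Mij}.'' That is only literally true when $\deg(i_{j+1}) - \deg(i_{j-1}) = 2$. Definition~\ref{def:XFbullet} permits simplices in which consecutive indices have degree gaps larger than one, and in that case the inductive clause of Definition~\ref{def:Mij} requires an intermediate chain with strictly consecutive degrees, so one must first unwind $m_j \in \Exp_{i_j i_{j-1}}$ and $m_{j+1} \in \Exp_{i_{j+1}i_j}$ into such chains and then concatenate. The needed fact is true and the fix is routine, but the justification as written is too narrow. (The paper's proof is similarly terse on this point, simply asserting ``it follows from the definition,'' so this is a shared imprecision rather than a defect unique to your argument.)
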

\begin{proof}
	Linearity is by construction. To see closure under restriction to faces, it suffices by induction to consider facets. Consider the $k$-simplex associated to $i_0, \dotsc, i_k \in I$ and $(m_1,\dotsc,m_k) \in \Exp_{i_1 i_0} \times \cdots \times \Exp_{i_k i_{k\shortminus 1}}$, and choose some $0 \leq j \leq k$. We respectively define $i'_\ell$ for $0 \leq \ell <k$ and $m'_\ell$ for $1 \leq \ell < k$ by 
	$$ i'_\ell = \begin{cases}
		i_\ell & \ell < j \\
		i_{\ell + 1} & \ell \geq j,
	\end{cases}
	\quad
	m'_\ell = \begin{cases}
		m_\ell & \ell < j \\
		m_j + m_{j+1} & \ell = j \\
		m_{\ell+1} & \ell > j.
	\end{cases} $$ 
	It follows from the definition of $\Exp_{i'_j i'_{j\shortminus 1}} = \Exp_{i_{j + 1}i_{j\shortminus 1}}$ that $(m'_1, \dotsc, m'_{k\shortminus 1})$ belongs to $\Exp_{i'_1 i'_0} \times \cdots \times \Exp_{i'_k i'_{k\shortminus 1}}$, and it follows from inspection that the $(k\shortminus 1)$-simplex over $T^n$ associated to $(i'_0, \dotsc, i'_{k\shortminus 1})$ and $(m'_1, \dotsc, m'_{k\shortminus 1})$ is the $j$th facet of the given simplex. Note in checking the $j=0$ case that we have an equality 
	$$[x_{i_1}, \dotsc, x_{i_k} + \sum_{\ell = 2}^k m_\ell] \!\!\!\!\mod \bZ^n = [x_{i_1} + m_1, \dotsc, x_{i_k} + \sum_{\ell = 1}^k m_\ell] \!\!\!\!\mod \bZ^n$$ of $(k \shortminus 1)$-simplices over $T^n$, even though the two expressions involve distinct simplices in $\bR^n$. 
	
	Now suppose that $F^{\shortminus k} \cong 0$, hence that $X(F^\bullet)_k$ is empty, for $k \geq n$. We want to show the condition that $\sigma_k$ is not an embedding on $\Int\,\Delta^k$ for some $k < n$ and some $\sigma_k \in X(F^\bullet)_k$ is a measure zero condition on $(x_i)_{i \in I} \in (\bR^n)^I$. Since there are finitely many simplices, it suffices to show this is a measure zero condition for any fixed $\sigma_k$. We have $\sigma_k = [x_{i_0}, x_{i_1} + \ol{m}_1, \dotsc, x_{i_k} + \ol{m}_k] \!\!\!\mod \bZ^n$ for some $i_0, \dotsc, i_k \in I$ and $\ol{m}_1 \dotsc, \ol{m}_k \in \bZ^n$. Set $\widehat{\sigma}_k := [x_{i_0}, x_{i_1} + \ol{m}_1, \dotsc, x_{i_k} + \ol{m}_k] \subset \bR^n$. By genericity of the $x_i$ we may assume the vertices of $\widehat{\sigma}_k$ are distinct, and then write $\Int\,\widehat{\sigma}_k$ for the preimage of $\Int\,\Delta^k$ under $\widehat{\sigma}_k \cong \Delta^k$ (strictly speaking, $\widehat{\sigma}_k$ has empty interior as a subset of $\bR^n$).  
	The condition that $\sigma_k$ is not an embedding on $\Int\,\Delta^k$ is the condition that $(\Int\, \widehat{\sigma}_k) \cap (\Int\, \widehat{\sigma}_k + m)$ is nonempty for some $m \in \bZ^n$. Since there are countably many $m$, it suffices to show this is a measure zero condition for any fixed~$m$. 
	
	For this it suffices to show the condition that the $k$-planes spanned by $\Int\, \widehat{\sigma}_k$ and $\Int\, \widehat{\sigma}_k + m$ have nonempty intersection is a measure zero condition. These $k$-planes are parallel, so their intersection is nonempty if and only if they coincide. This happens if and only if $m$ is in the linear span of the $x_{i_j} + \ol{m}_j - x_{i_0}$ for $1 \leq j \leq k$, which is a measure zero condition since $k < n$. 
\end{proof}

Note that $X(F^\bullet)$ is equivalent to the data of the simplicial complex over $T^n$ obtained by adjoining the evident degenerate simplices. Our exposition sometimes conflates the two, but the semi-simplicial complex above is more convenient in practice. 

Our next goal is to make precise what information in $F^\bullet$ is encoded by $X(F^\bullet)$. As already discussed, we summarize the answer as being the discrete information in $F^\bullet$. To formalize this, note that the sets $\Exp_{ij}$ can be packaged as a function $\Exp: I \times I \to P(\bZ^n)$, where $P(\bZ^n)$ is the power set of $\bZ^n$ (this package is of course highly redundant, as $\Exp$ is determined by its values on pairs $i,j$ with $\deg(i) = \deg(j) + 1$). 

\begin{Definition}
	The discrete information in $F^\bullet$ consists of  the index set $I$, the degree function $\deg: I \to \bZ$, and the function $\Exp: I \times I \to P(\bZ^n)$. 
\end{Definition}

We regard two collections of such information as equivalent if they differ by the following superficial operations. First, we can transfer the rest of the data across any bijection $I \cong I'$ of index sets. And second, for any $i \in I$ and $m \in \bZ^n$ we can add $m$ to each element of $\Exp_{ij}$ and $\shortminus m$ to each element of $\Exp_{ji}$ for all $j$. This corresponds to rescaling the $i$th summand of~$F^\bullet$ by $z^m$, which does not affect anything of interest. 

To justify our terminology, note that the above information determines $F^\bullet$ up to a finite-dimensional continuous ambiguity. That is, consider the family of sequences
$$ \cdots \to R_n^{\,\oplus I_k} \xrightarrow{d^k} R_n^{\,\oplus I_{k+1}} \to \cdots, $$
where $d^k$ ranges over all $I_{k+1} \times I_k$ matrices whose $ij$-entry is of the form $\sum_{m \in \Exp_{ij}} c_m z^m$ with each $c_m$ nonzero. This family contains $F^\bullet$, and is parametrized by an algebraic torus whose dimension is the sum of the cardinalities $|\Exp_{ij}|$ as $i, j$ range over all pairs with $\deg(i) = \deg(j) + 1$. 

\begin{Theorem}\label{thm:recovery}
	If the $x_i$ are distinct modulo $\bZ^n$, then for any $i, j \in I$ we have
	\begin{equation}\label{eq:Mijrecovery}
		\Exp_{ij} = \{m \in \bZ^n \,|\, [x_j, x_i + m] \!\!\!\!\mod \bZ^n \in X(F^\bullet)_1 \}. 
	\end{equation}
	In particular, the discrete information in $F^\bullet$ can be recovered up to equivalence from $X(F^\bullet)$ and the function $\deg: X(F^\bullet)_0 \to \bZ$. 
\end{Theorem}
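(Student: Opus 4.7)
The plan is to prove the set equality (\ref{eq:Mijrecovery}) by a direct two-way inclusion argument, then use it to reconstruct the data $(I,\deg,\Exp)$ up to equivalence.

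For the inclusion $\Exp_{ij}\subseteq \mathrm{RHS}$: any $m\in\Exp_{ij}$ produces the $1$-simplex $[x_j,x_i+m]\!\!\mod\bZ^n\in X(F^\bullet)_1$ directly from Definition \ref{def:XFbullet} by taking $(i_0,i_1)=(j,i)$ and $m_1=m$. This needs no hypothesis on the $x_i$.

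The reverse inclusion is where the distinctness hypothesis enters. Suppose $\sigma:=[x_j,x_i+m]\!\!\mod\bZ^n$ belongs to $X(F^\bullet)_1$. Unpacking Definition \ref{def:XFbullet}, this means there exist $i_0,i_1\in I$ and $m_1\in\Exp_{i_1i_0}$ with $[x_{i_0},x_{i_1}+m_1]\!\!\mod\bZ^n=\sigma$ as maps $\Delta^1\to T^n$. Evaluating at the two vertices of $\Delta^1$ gives the congruences $x_{i_0}\equiv x_j\pmod{\bZ^n}$ and $x_{i_1}+m_1\equiv x_i+m\pmod{\bZ^n}$. Because the $x_k$ are pairwise distinct modulo $\bZ^n$, the first forces $i_0=j$, and the second (after noting $x_i\equiv x_{i_1}$ gives $i_1=i$) reduces to an equality in $\bZ^n$, namely $m_1=m$. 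Hence $m\in\Exp_{i_1i_0}=\Exp_{ij}$, proving the reverse inclusion.

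For the recovery statement, observe first that the map $i\mapsto [x_i]\!\!\mod\bZ^n$ is a bijection from $I$ onto $X(F^\bullet)_0$ by the distinctness hypothesis, so the set $I$ and the function $\deg$ are immediately recovered from the data $(X(F^\bullet),\deg\colon X(F^\bullet)_0\to\bZ)$. To recover $\Exp$, choose any lifts $x'_i\in\bR^n$ of the points $[x_i]\in T^n$ (these are determined by $X(F^\bullet)_0$). Each lift satisfies $x'_i=x_i+n_i$ for some $n_i\in\bZ^n$. Applying (\ref{eq:Mijrecovery}) with the $x'_i$ in place of the $x_i$, one obtains the set
\[
\Exp'_{ij}=\{m\in\bZ^n\mid [x'_j,x'_i+m]\!\!\mod\bZ^n\in X(F^\bullet)_1\}=\Exp_{ij}+(n_i-n_j),
\]
since translating both endpoints of a $1$-simplex in $\bR^n$ by integer vectors does not change its image map to $T^n$. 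The ambiguity $m\mapsto m+(n_i-n_j)$ is exactly the equivalence relation generated by the operation of rescaling the $i$th summand of $F^\bullet$ by $z^{n_i}$, so the triple $(I,\deg,\Exp)$ is determined up to the declared equivalence.

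The only delicate step is the reverse inclusion, where one must be careful to interpret an element of $X(F^\bullet)_1$ as a map $\Delta^1\to T^n$ and then extract the correct identifications of both the vertex labels $i_0,i_1$ and of the integer vector $m_1$ in $\bZ^n$ (rather than only in $\bZ^n/\text{stabilizer}$). The distinctness hypothesis is exactly what makes both of these identifications rigid.
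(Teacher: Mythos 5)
Your proof takes essentially the same approach as the paper: a two-way inclusion for the set equality, with the distinctness hypothesis used to rigidify the endpoint identifications, followed by the observation that the choice of preimage only changes $\Exp$ by the declared equivalence. Two small wrinkles worth tidying: in the reverse inclusion, the step ``reduces to an equality in $\bZ^n$, namely $m_1=m$'' does not actually follow from the vertex congruences alone (after $i_1=i$ the congruence $m_1\equiv m\pmod{\bZ^n}$ is vacuous since both lie in $\bZ^n$) --- you need the equality of maps at an interior parameter $t\in(0,1)$, which forces $t(m-m_1)\in\bZ^n$ for all $t$ and hence $m_1=m$; you flag this in your closing remark, but it should be executed in the body. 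Also, in the recovery step the translation goes the other way: one finds $\Exp'_{ij}=\Exp_{ij}+(n_j-n_i)$ rather than $+(n_i-n_j)$, though this sign slip does not affect the conclusion that the ambiguity is exactly the declared equivalence.
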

\begin{proof}
	By definition $\Exp_{ij}$ is contained in the given subset of $\bZ^n$. On the other hand, if $[x_j, x_i + m] \!\!\!\mod \bZ^n \in X(F^\bullet)_1$ then $[x_j, x_i + m] \!\!\!\mod \bZ^n = [x_\ell, x_k + m'] \!\!\!\mod \bZ^n$ for some $k, \ell \in I$ and $m' \in \bZ^n$. If the $x_i$ are distinct modulo $\bZ^n$ then we must have $i = k$ and $j = \ell$, and the needed containment follows since the 1-simplices $[x_j, x_i + m] \!\!\!\mod \bZ^n$ and $[x_j, x_i + m'] \!\!\!\mod \bZ^n$ are distinct for $m \neq m'$. 
	
	For the final recovery claim note that $I \cong X(F^\bullet)_0$, and that by hypothesis we then know the function $\deg: I \to \bZ$. To recover $\Exp$ from (\ref{eq:Mijrecovery}) we take each $x_i$ to be an arbitrary preimage in $\bR^n$ of the associated element of $X(F^\bullet)_0$. Any two preimages differ by some $m \in \bZ^n$, and the resulting functions $\Exp$ will differ by adding $m$ to each element of $\Exp_{ij}$ and $\shortminus m$ to each element of $\Exp_{ji}$ for all $j$. Thus up to equivalence $\Exp$ is independent of the choice of preimage. 
\end{proof}

In general, $F^\bullet$ and $X(F^\bullet)$ may be far from determined by $T(F^\bullet)$. For example, the latter can be all of $T^n$, since we have placed no bounds on the length of $F^\bullet$ and hence no bounds on the dimension of $X(F^\bullet)$. But when $T(F^\bullet)$ is of positive codimension the situation is better, and we have the following recovery theorem. Here we write $T(F^\bullet)_0 \subset T^n$ for the image of $X(F^\bullet)_0$, noting that when the $x_i$ are distinct modulo $\bZ^n$ we have $T(F^\bullet)_0 \cong I$. We emphasize that this case includes many of interest: an $R_n$-module $M$ has projective dimension at most~$n$, so except when this upper bound is achieved, $T(F^\bullet)$ will be of positive codimension for any minimal free resolution of $M$. 

\begin{Theorem}\label{thm:recovery2}
	Suppose the $x_i$ are generic and the interior of $T(F^\bullet)$ is empty. Then for $i_0, \dotsc, i_k \in I$ and $\ol{m}_1, \dotsc, \ol{m}_k \in \bZ^n$ the simplex $\sigma_k = [x_{i_0}, x_{i_1} + \ol{m}_1, \dotsc, x_{i_k} + \ol{m}_k] \!\!\!\mod \bZ^n$ belongs to $X(F^\bullet)_k$ if and only if $\deg(i_0) < \cdots < \deg(i_k)$ and its image is contained in $T(F^\bullet)$. 
	In particular, the discrete information in $F^\bullet$ can be recovered up to equivalence from $T(F^\bullet)$, $T(F^\bullet)_0$, and the function $\deg: T(F^\bullet)_0 \to \bZ$. 
\end{Theorem}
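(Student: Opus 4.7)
My plan is to prove the biconditional by handling each direction separately, with the nontrivial reverse direction going by induction on $k$.

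The forward direction is immediate from Definition \ref{def:XFbullet} and Definition \ref{def:Mij}: if $\sigma_k \in X(F^\bullet)_k$ then by construction it arises from a tuple $(m_1,\ldots,m_k) \in \Exp_{j_1 j_0} \times \cdots \times \Exp_{j_k j_{k-1}}$ for some $j_0,\ldots,j_k$, which is empty unless $\deg(j_0) < \cdots < \deg(j_k)$. Genericity of the $x_i$ (as used in the proof of Theorem \ref{thm:recovery}) forces the standard presentation to be unique, so $(j_\ell) = (i_\ell)$, giving the stated strict degree ordering on $(i_0,\ldots,i_k)$. Image containment in $T(F^\bullet)$ is tautological.

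For the reverse direction I would induct on $k$. The case $k = 0$ is trivial since $X(F^\bullet)_0$ contains $[x_i]\!\mod \bZ^n$ for every $i \in I$. For the base case $k = 1$, since $T(F^\bullet)$ is a finite union of simplex images and the image of $\sigma_1$ is a compact arc, the Baire category theorem produces some $\tau = [x_{j_0}, x_{j_1} + m'_1, \ldots, x_{j_j} + m'_1 + \cdots + m'_j] \mod \bZ^n$ in $X(F^\bullet)_j$ whose image contains an open subset of the image of $\sigma_1$. Lifting to $\bR^n$, the affine line spanned by the lift of $\sigma_1$ must lie in a $\bZ^n$-translate of the affine span of the lift of $\tau$. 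Matching the constant and linear-in-$t$ parts of this inclusion yields a formal linear relation $x_{i_1} - x_{i_0} = \sum_\ell c_\ell (x_{j_\ell} - x_{j_0})$ in the $x$-variables, supplemented by an $\bZ^n$-balance on the integer shifts. For generic $x_i$ this relation admits only the combinatorially forced solution: there exist $0 \leq a < b \leq j$ with $j_a = i_0$ and $j_b = i_1$, $c_a = -1$, $c_b = 1$, and all other $c_\ell = 0$ (with strict degree increase on $\tau$ ruling out $b < a$). The integer balance then forces $\ol m_1 = \sum_{r=a+1}^b m'_r$, and iterating the composition clause of Definition \ref{def:Mij} along the chain $j_a,j_{a+1},\ldots,j_b$ shows this sum lies in $\Exp_{j_b j_a} = \Exp_{i_1 i_0}$, placing $\sigma_1$ in $X(F^\bullet)_1$.

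For the inductive step $k \geq 2$ I would argue via facets. Each facet $\sigma_k^{(j)}$ obtained by deleting the $j$-th vertex has image contained in that of $\sigma_k$, hence in $T(F^\bullet)$, and inherits strict degree increase. By induction $\sigma_k^{(j)} \in X(F^\bullet)_{k-1}$. Applied to $j = k$ this yields $\ol m_\ell - \ol m_{\ell-1} \in \Exp_{i_\ell i_{\ell-1}}$ for $\ell = 1, \ldots, k-1$; applied to $j = 0$, after shifting by $-\ol m_1$ to put the facet in standard form, it yields the same inclusion for $\ell = 2, \ldots, k$. Together these cover every $\ell$, so $\sigma_k \in X(F^\bullet)_k$ by Definition \ref{def:XFbullet}. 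The final recovery claim then follows by combining the characterization just proved---which identifies $X(F^\bullet)_1$ from $T(F^\bullet)$, $T(F^\bullet)_0 \cong I$, and the degree function---with the reconstruction of $\Exp$ from $X(F^\bullet)_1$ provided by Theorem \ref{thm:recovery}. The main obstacle is the $k = 1$ case, where one must convert a purely set-theoretic containment of an arc in $T(F^\bullet)$ into the precise combinatorial data (indices $a, b$ and a specific decomposition of $\ol m_1$ as a sum of exponents $m'_r$); ensuring the genericity step rules out all spurious coincidences except for the combinatorially forced one is the delicate part.
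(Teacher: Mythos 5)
Your proof is essentially correct but takes a genuinely different route from the paper's.

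The paper proves the backward direction in one shot via a genericity lemma (its Lemma~\ref{lem:genericproperty}) that works uniformly for all $k < n$: it produces a single interior point $\theta$ of $\sigma_k$ with the property that $\theta$ can only lie on a simplex $\sigma_\ell$ of the ambient form if $\sigma_k$ is a face of $\sigma_\ell$. Since $\theta \in T(F^\bullet)$, some $\sigma_\ell \in X(F^\bullet)_\ell$ contains it, so $\sigma_k$ is a face of that simplex and hence lies in $X(F^\bullet)$ by closure under faces. No induction on $k$ is needed.

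Your proposal instead isolates $k=1$ as the only case where the measure-theoretic content is required, and then derives the general case by a clean induction on facets: both facets $\sigma_k^{(0)}$ and $\sigma_k^{(k)}$ lie in $X(F^\bullet)_{k-1}$ by induction, and by uniqueness of the presentation (distinct $x_i \bmod \bZ^n$) they jointly force $\ol{m}_\ell - \ol{m}_{\ell-1} \in \Exp_{i_\ell i_{\ell-1}}$ for all $1 \le \ell \le k$, placing $\sigma_k$ in $X(F^\bullet)_k$. This facet induction is a genuine structural simplification not present in the paper, and it buys you the ability to concentrate all the genericity work in dimension one.

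The one place where you are working from the same raw material as the paper but have not fully nailed it down is the $k=1$ base case. Your Baire-category step (some simplex image of $X(F^\bullet)$ contains an open subarc of $\sigma_1$, hence the whole affine line through $\sigma_1$ lies in a $\bZ^n$-translate of that simplex's affine span) is fine. But the assertion that for generic $x_i$ the resulting linear-plus-integer relation ``admits only the combinatorially forced solution $j_a = i_0$, $j_b = i_1$'' is exactly the content of the first paragraph of the paper's Lemma~\ref{lem:genericproperty}, and that argument requires a small but real case analysis (countably many choices of $(j,m)$; whether $i_0$, $i_1$ already appear among the $j_\ell$ with the matching or a mismatching integer shift; using $j < n$ to get positive codimension). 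You flag this as the delicate part, which is correct. Executed carefully in the style of the paper's lemma, your argument closes, and the overall proof stands. The final recovery claim via Theorem~\ref{thm:recovery} is handled the same way in both.
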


\begin{Lemma}\label{lem:genericproperty}
	Suppose the $x_i$ are generic, and let $\sigma_k = [x_{i_0}, x_{i_1} + \ol{m}_1, \dotsc, x_{i_k} + \ol{m}_k] \!\!\!\mod \bZ^n$ for some $k < n$, some $i_0, \dotsc, i_k \in I$ with $\deg(i_0) < \cdots < \deg(i_k)$ and some $\ol{m}_1, \dotsc, \ol{m}_k \in \bZ^n$. Then there exists $\theta \in \sigma_k(\Int\, \Delta^k)$ with the following property: if $\sigma_\ell = [x_{j_0}, x_{j_1} + \ol{m}'_1, \dotsc, x_{j_\ell} + \ol{m}'_\ell] \!\!\!\mod \bZ^n$ for some $\ell < n$, some $j_0, \dotsc, j_\ell \in I$ with $\deg(j_0) < \cdots < \deg(j_\ell)$, and some $\ol{m}'_1, \dotsc, \ol{m}'_\ell \in \bZ^n$, then $\theta \notin \sigma_\ell(\Delta^\ell)$ unless $\sigma_k$ is a face of $\sigma_\ell$. 
\end{Lemma}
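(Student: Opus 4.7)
The plan is to fix an arbitrary lift $\hat{\sigma}_k = [x_{i_0}, x_{i_1}+\ol{m}_1, \dotsc, x_{i_k} + \ol{m}_k] \subset \bR^n$ of $\sigma_k$, whose interior $\hat{\sigma}_k^\circ$ maps injectively to $\sigma_k(\Int\, \Delta^k)$ by the genericity argument in Proposition~\ref{prop:immersedsimpset}, and then to find a point $\hat{\theta} \in \hat{\sigma}_k^\circ$ whose image $\theta \in T^n$ avoids $\sigma_\ell(\Delta^\ell)$ for every ``bad'' $\sigma_\ell$---that is, every $\sigma_\ell$ of the type in the statement of which $\sigma_k$ is not a face. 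Since the tuples $(j_0, \dotsc, j_\ell)$, the translations $(\ol{m}'_1, \dotsc, \ol{m}'_\ell) \in (\bZ^n)^\ell$, and the deck translation $p \in \bZ^n$ relating $\sigma_\ell$ to a lift $\hat{\sigma}_\ell + p \subset \bR^n$ range over a countable set, it is enough to show that for generic $x_i$ every ``bad'' intersection $(\hat{\sigma}_\ell + p) \cap \hat{\sigma}_k^\circ$ has $k$-dimensional Lebesgue measure zero inside $\hat{\sigma}_k^\circ$.

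The heart of the argument will be the following measure-zero claim: if this intersection has positive $k$-measure then $\mathrm{aff}(\hat{\sigma}_k) \subset \mathrm{aff}(\hat{\sigma}_\ell + p)$, so every vertex $v_a := x_{i_a} + \ol{m}_a$ of $\hat{\sigma}_k$ must be an affine combination $\sum_b \mu_b v'_b$ with $\sum_b \mu_b = 1$ of the vertices $v'_b := x_{j_b} + \ol{m}'_b + p$ of $\hat{\sigma}_\ell + p$ (with the convention $\ol{m}_0 = \ol{m}'_0 = 0$). Treating the $x_i$ as independent formal vector variables, and using that the $j_b$ are distinct (via the strictly increasing degree condition), such an identity in the $x$'s can hold for all values of $(x_i) \in (\bR^n)^I$ only when $i_a = j_{b(a)}$ for some $b(a)$, with $\mu_{b(a)} = 1$ and the remaining $\mu_b = 0$; in this case $v_a = v'_{b(a)}$ and the remaining identity $\ol{m}_a = \ol{m}'_{b(a)} + p$ on the constant parts pins down $b(a)$ uniquely. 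In any other situation, the condition $v_a \in \mathrm{aff}(\{v'_b\})$ cuts out a proper algebraic subvariety of $(\bR^n)^I$, which is measure zero; taking the union of these loci over the countable collection of bad tuples produces a measure-zero subset we exclude from the generic locus.

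Assuming generic $x_i$, we would then have that in any bad case each $v_a$ equals some $v'_{b(a)}$. The strict increase of $\deg$ along $(i_a)$ and $(j_b)$ forces $b(\cdot)$ to be strictly increasing, so $\sigma_k$ factors as $\sigma_\ell \circ \iota$ for the corresponding face inclusion $\iota \colon \Delta^k \into \Delta^\ell$---contradicting that $\sigma_k$ is not a face of $\sigma_\ell$. Consequently, for every bad $\sigma_\ell$ and every $p$ the set $(\hat{\sigma}_\ell + p) \cap \hat{\sigma}_k^\circ$ is contained in a proper affine subspace of $\mathrm{aff}(\hat{\sigma}_k)$ and so has measure zero in $\hat{\sigma}_k^\circ$; the countable union over bad tuples still has measure zero, and any $\hat{\theta}$ in its complement yields the required $\theta$. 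The $k=0$ case reduces to the observation that $\pi(x_{i_0})$ generically lies in $\sigma_\ell(\Delta^\ell)$ only when $x_{i_0}$ itself is a vertex of some lift $\hat{\sigma}_\ell + p$. The main obstacle will be organizing the affine-independence computation carefully enough that the genericity input can be packaged as a single countable union of proper algebraic conditions on $(x_i)$; once this bookkeeping is in place, the conclusion is essentially immediate.
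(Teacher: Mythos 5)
Your proposal is correct and takes essentially the same approach as the paper: both fix a lift $\widehat{\sigma}_k\subset\bR^n$, reduce to showing each bad intersection $\Int\widehat{\sigma}_k\cap(\widehat{\sigma}_\ell+p)$ has $k$-dimensional measure zero for generic $x_i$, and achieve this by observing that if $\sigma_k$ is not a face of $\sigma_\ell$ then some vertex of $\widehat{\sigma}_k$ generically fails to lie in the $\ell$-plane through $\widehat{\sigma}_\ell+p$, so the two affine hulls meet in positive codimension. The only cosmetic difference is that the paper isolates the incidence statement as a separate reusable claim (``$x_j+m$ lies in the $k$-plane through $\widehat{\sigma}_k$ iff it is a vertex''), while you carry out the affine-dependence computation directly with the $x_i$ treated as formal variables.
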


\begin{proof}
	As in the proof of Proposition \ref{prop:immersedsimpset} we set $\widehat{\sigma}_k := [x_{i_0}, x_{i_1} + \ol{m}_1, \dotsc, x_{i_k} + \ol{m}_k] \subset \bR^n$ and write $\Int\,\widehat{\sigma}_k$ for the preimage of $\Int\,\Delta^k$ under $\widehat{\sigma}_k \cong \Delta^k$ (assuming by genericity of the $x_i$ that the vertices of $\widehat{\sigma}_k$ are distinct).  We first claim that for any $j \in I$ and $m \in \bZ^n$, the point $x_j + m$ is contained in the $k$-plane through $\widehat{\sigma}_k$ if and only if $j = i_{a}$ and $m = \ol{m}_a$ for some $0 \leq a \leq k$ (where $\ol{m}_0=0$). To see this, note that for any choice of $j$ and $m$ not of this form, the condition that $x_j + m$ belongs to this $k$-plane defines a positive codimension, hence measure zero, subset of $(\bR^n)^I$ (because $k < n$). There are countably many choices of $j$ and~$m$ not of this form, hence the union of all such subsets is also measure zero. 
	
	Now note that the desired claim is equivalent to 
	\begin{equation}\label{eq:intersections}
		\bigcup_{\ell < n}\left(\bigcup_{\substack{\text{$\sigma_\ell$ w/o $\sigma_k$}\\ \text{ as a face}} }\sigma_k(\Int\, \Delta^k) \cap \sigma_\ell(\Delta^\ell)\right)
	\end{equation}
	being a proper subset of $\sigma_k(\Int\, \Delta^k)$, where the inner union is over all $\sigma_\ell$ of the form given in the statement which do not have $\sigma_k$ as a face. By the proof of Proposition \ref{prop:immersedsimpset} we already know that for generic $x_i$ the map $\pi$ restricts to a homeomorphism between $\Int\,\widehat{\sigma}_k$ and $\sigma_k(\Int\, \Delta^k)$.  Thus it suffices to show the preimage of (\ref{eq:intersections}) in $\Int\,\widehat{\sigma}_k$ is measure zero for the standard $k$-dimensional measure. The union in (\ref{eq:intersections}) is countable, so it suffices to show the preimage of each $\sigma_k(\Int\, \Delta^k) \cap \sigma_\ell(\Delta^\ell)$ is measure zero. 
	
	For any specific $\sigma_\ell$ we can write this preimage as 
	$ \bigcup_{m \in \bZ^n} (\Int\,\widehat{\sigma}_k) \cap (\widehat{\sigma}_\ell + m),$ defining $\widehat{\sigma}_\ell$ the same way as  $\widehat{\sigma}_k$. Thus it suffices to show $(\Int\,\widehat{\sigma}_k) \cap (\widehat{\sigma}_\ell + m)$ is of measure zero in $\Int\,\widehat{\sigma}_k$ for any $m \in \bZ^n$. Because $\sigma_k$ is not a face of $\sigma_\ell$ we have that for some $0 \leq a \leq k$ the vertex $x_{i_a} + \ol{m}_a$ of $\widehat{\sigma}_k$ is not a vertex of $\widehat{\sigma}_\ell + m$. We then have that $x_{i_a} + \ol{m}_a$ does not lie in the $\ell$-plane through $\widehat{\sigma}_\ell + m$ by the first paragraph (now applied to $\widehat{\sigma}_\ell + m$ instead of $\widehat{\sigma}_k$). Since $k,\ell < n$ this $\ell$-plane thus meets the $k$-plane through $\widehat{\sigma}_k$ in an affine subspace of positive codimension, hence this subspace and its further intersection with $\Int\,\widehat{\sigma}_k$ are of measure zero.  
\end{proof}

\begin{proof}[Proof of Theorem \ref{thm:recovery2}]
	By definition every $k$-simplex of $X(F^\bullet)$ is of the given form for some $i_0, \dotsc, i_k$ and $\ol{m}_1, \dotsc, \ol{m}_k$. Suppose a $k$-simplex $\sigma_k$ of this form has image contained in~$T(F^\bullet)$. Since the $x_i$ are generic and the interior of $T(F^\bullet)$ is empty we must have $k < n$. We may thus choose a point $\theta \in \sigma_k(\Int\,\Delta^k)$ satisfying the conclusion of Lemma \ref{lem:genericproperty}. Since $\theta \in T(F^\bullet)$ we must have $\theta \in \sigma_\ell(\Delta^\ell)$ for some $\ell$ and some $\sigma_\ell \in X(F^\bullet)_\ell$. Again since the $x_i$ are generic and the interior of $T(F^\bullet)$ is empty we must have $\ell < n$. But by definition any $\ell$-simplex in $X(F^\bullet)_\ell$ is of the form stated in Lemma  \ref{lem:genericproperty}, hence by its conclusion $\sigma_k$ is a face of $\sigma_\ell$ and is thus also a simplex of $X(F^\bullet)$. Thus we can recover $X(F^\bullet)$ and $X(F^\bullet)_0$ from $T(F^\bullet)$ and $T(F^\bullet)_0$, and the final recovery claim follows from Theorem \ref{thm:recovery}. 
\end{proof}

Next we consider which linear simplicial complexes over $T^n$ arise from Definition~\ref{def:XFbullet}. The question is usefully framed by the case when $F^\bullet$ has only two terms. This implies that $T(F^\bullet)$ is a graph with linear edges (possibly crossing each other if $n=2$). The degree function $T(F^\bullet)_0 \to \{0,\shortminus 1\}$ may be regarded as a bipartite coloring, and conversely a graph in $T^n$ with linear edges is of this form if and only if it is bipartite. 

This recognition criterion generalizes as follows. First note that if $F^\bullet$ has $k$ terms then $X(F^\bullet)$ only has simplices of dimension less than $k$, and the degree function defines a $k$-coloring of the 1-skeleton of $X(F^\bullet)$. It follows from the constructions that $X(F^\bullet)$ is determined by this $k$-coloring together with the subgraph of its 1-skeleton formed by its degree-one edges. Here and below given a simplicial complex $X$ and an integer-valued vertex coloring $\deg: X_0 \to \bZ$, we say the degree of an edge $\sigma: [0,1] \to T^n$ belonging to $X_1$ is $\deg(\sigma(1)) - \deg(\sigma(0))$. 

The existence of a vertex coloring such that $X$ is determined by its degree-one edges in this sense turns out to characterize when $X$ is of the form $X(F^\bullet)$. More precisely, the first condition in the following statement characterizes how the degree-one edges of $X$ should determine its remaining edges, and the second how these should determine the higher-dimensional simplices. 

\begin{Proposition}\label{prop:characterization}
	Let $X$ be a finite, linear semi-simplical complex over $T^n$ whose vertices have pairwise distinct images. Then $X \cong X(F^\bullet)$ for some bounded sequence $F^\bullet$ of finite-rank free modules if and only if there exists a function $\deg: X_0 \to \bZ$ such that the following hold.
		\begin{enumerate}
		\item An edge $[x,y]\!\!\! \mod \bZ^n$ belongs to $X_1$ if and only if for some $k$ there exist $x_0 = x, x_1, \dotsc, x_{k-1}, x_k = y \in \bR^n$ such that $[x_i]\!\!\! \mod \bZ^n$ belongs to $X_0$ for all $0 \leq i \leq k$ and $[x_{i-1},x_{i}]\!\!\! \mod \bZ^n$ belongs to $X_1$ and has degree one for $1 \leq i \leq k$. 
		\item A $k$-simplex $[x_0, \dotsc, x_k]\!\!\! \mod \bZ^n$ with $k > 1$ belongs to $X_k$ if and only if $[x_i]\!\!\! \mod \bZ^n$ belongs to $X_0$ for $0 \leq i \leq k$ and $[x_{i-1}, x_{i}]\!\!\!\mod \bZ^n$ belongs to $X_1$ for $1 \leq i \leq k$. 
	\end{enumerate}
\end{Proposition}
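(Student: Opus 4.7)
The proof splits into the two implications. For the forward direction, suppose $X = X(F^\bullet)$, and transport $\deg : I \to \bZ$ to $X_0$ via the bijection furnished by the distinctness of vertex images. I would verify conditions (1) and (2) by direct unwinding against Definitions 3.1 and 3.2. Condition (1) is essentially a restatement of the inductive definition of $\Exp_{ij}$: an edge $[x_j, x_i + m] \mod \bZ^n$ of degree $k$ in $X(F^\bullet)_1$ corresponds to $m \in \Exp_{ij}$ with $\deg(i) - \deg(j) = k$, and Definition 3.1 presents such an $m$ as a sum $m_1 + \cdots + m_k$ of consecutive degree-one exponents, exhibiting the required chain via $z_\ell := x_{i_\ell} + \sum_{a \leq \ell} m_a$; conversely, any such chain assembles into an element of $\Exp_{ij}$. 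Condition (2) follows from Definition 3.2 similarly: the ``only if'' is immediate, and the ``if'' uses that distinct vertex images uniquely identify the lifts $y_a$ with $x_{i_a} + q_a$ for some $q_a \in \bZ^n$, after which the hypothesis that each $[y_{a-1}, y_a] \mod \bZ^n$ lies in $X_1$ forces $q_a - q_{a-1} \in \Exp_{i_a i_{a-1}}$ (after normalizing $q_0 = 0$), exhibiting the simplex as an element of $X(F^\bullet)_k$.

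For the reverse direction, the plan is to extract $F^\bullet$ directly from $X$. Set $I := X_0$ with the given degree function, $I_k := \deg^{\shortminus 1}(k)$, and $F^k := R_n^{\oplus I_k}$; choose lifts $x_i \in \bR^n$ of each vertex. For $\deg(i) = \deg(j) + 1$, define $\Exp_{ij} := \{m \in \bZ^n : [x_j, x_i + m] \mod \bZ^n \in X_1\}$, a finite set indexing the degree-one edges between the relevant vertices, and set $d^{\deg(j)}_{ij} := \sum_{m \in \Exp_{ij}} c_{ijm} z^m$ for arbitrary nonzero $c_{ijm} \in \bC$. Finiteness of $X$ ensures $F^k = 0$ for $|k| \gg 0$, so this is a legitimate bounded sequence of finite-rank free modules. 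To conclude, I would show $X(F^\bullet) \cong X$ level by level: vertex sets agree tautologically; for edges, Definition 3.1 builds $\Exp_{ij}$ for $\deg(i) - \deg(j) > 1$ by summing along degree-one chains, which, combined with condition (1) and our choice of the degree-one $\Exp_{ij}$, yields $X(F^\bullet)_1 = X_1$; for $k > 1$, condition (2) gives a characterization of $X_k$ which by the forward argument applies verbatim to $X(F^\bullet)_k$, so these coincide.

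The only real hurdle is the bookkeeping translation between absolute lifts in $\bR^n$ and integer-vector exponents in $\Exp_{ij}$. Condition (1) is phrased in terms of arbitrary chains $(x_0, \dotsc, x_k) \in (\bR^n)^{k+1}$, whereas Definition 3.1 works with sums of elements of the degree-one $\Exp$'s; matching them requires normalizing the first lift (shifting the chain so that $x_0 = x_{i_0}$) and then telescoping $m_\ell = q_\ell - q_{\ell-1}$ along the chain, where $x_\ell = x_{i_\ell} + q_\ell$. A parallel normalization arises in the analysis of higher simplices. These manipulations are routine but must be executed explicitly so that the isomorphism $X(F^\bullet) \cong X$ respects the full semi-simplicial structure; no deeper obstacle arises once the notational dictionary is fixed.
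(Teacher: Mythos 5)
Your proposal is correct and takes essentially the same route as the paper's proof: for the reverse direction you reconstruct $F^\bullet$ from the degree-one edges of $X$ exactly as the paper does, and then match $X$ against $X(F^\bullet)$ level by level via the same telescoping/normalization bookkeeping. The paper's proof only writes out that reverse direction (treating the forward direction as definitional unwinding), so your explicit verification of conditions (1)--(2) from Definitions 3.1--3.2 is a welcome supplement rather than a divergence in method.
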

\begin{proof}
Suppose $X$ admits such a function. Set $I = X_0$, write $I_k \subset I$ for the subset of $i$ with $\deg(i) = k$, and for each $i \in I$ choose some $x_i \in \bR^n$ mapping to the associated point of $T^n$. For each $i, j \in I$ we write $\Exp'_{ij} \subset \bZ^n$ for the set of $m$ such that $[x_j,x_i + m]\!\!\! \mod \bZ^n$ belongs to $X_1$. For each $k$ we write $d^k$ for the $R_n$-valued $I_{k+1} \times I_k$ matrix whose $ij$th entry is $\sum_{m \in \Exp'_{ij}} z^m$. These matrices define a bounded sequence $F^\bullet$ with $F^k = R_n^{\,\oplus I_k}$, and we claim $X = X(F^\bullet)$. 

By construction $X_0 = X(F^\bullet)_0$. To show $X_1 = X(F^\bullet)_1$, it follows from the definitions that we must show that for any $i, j \in I$ the set $\Exp'_{ij}$ coincides with the set $\Exp_{ij}$ associated to~$F^\bullet$. This is true by construction if $\deg(i) = \deg(j) + 1$. More generally, condition~(1) implies that $m \in \Exp'_{ij}$ if and only if there exist $i_0 = j, i_1,  \dotsc, i_{k-1}, i_k = i \in I$ and $\ol{m}_1, \dotsc, \ol{m}_{k-1}, \ol{m}_k = m \in \bZ^n$ such that, for all $1 \leq \ell \leq k$, we have that $[x_{i_{\ell-1}} + \ol{m}_{\ell -1}, x_{i_\ell} + \ol{m}_\ell]\!\!\! \mod \bZ^n$ belongs to $X_1$ and that $\deg(i_\ell) = \deg(i_0) + \ell$ (where $\ol{m}_0 = 0$). The first condition is equivalent to $m_\ell := \ol{m}_\ell - \ol{m}_{\ell-1}$ belonging to $\Exp'_{i_\ell i_{\ell-1}}$ for $1 \leq \ell \leq k$ (here we set $m'_0 = 0$). But having established the degree one case, the second condition implies $\Exp'_{i_\ell i_{\ell-1}} = \Exp_{i_\ell i_{\ell-1}}$ for $1 \leq \ell \leq k$. Since $m = \ol{m}_k = \sum_{\ell = 1}^k m_\ell$, these conditions are then equivalent by inspection to the conditions in Definition \ref{def:Mij} that characterize when $m \in \Exp_{ij}$. 

To show that $X_k = X(F^\bullet)_k$ for $k > 1$, note that for any linear $k$-simplex $\sigma_k$ with vertices in $X_0$ we have $\sigma_k = [x_{i_0}, x_{i_1} + m'_1, \dotsc, x_{i_k} + m'_k]\!\!\! \mod \bZ^n$ for some uniquely determined $i_0, \dotsc, i_k \in I$ and $m'_1, \dotsc, m'_k \in \bZ^n$. By the previous paragraph $\sigma_k \in X_k$ if and only if $m_\ell = m'_{\ell} - m'_{\ell-1}$ belongs to $\Exp_{i_\ell i_{\ell-1}}$ for $1 \leq \ell \leq k$ (where $m'_0 =0$). But since $m'_\ell = \sum_{j = 1}^\ell m_j$, this is equivalent to the condition in Definition \ref{def:XFbullet} that characterizes when $\sigma_k \in X(F^\bullet)_k$. 
\end{proof}

\begin{Remark}\label{rem:characterization}
	It is natural to further ask which $X$ are of the form $X(F^\bullet)$ when we further require $F^\bullet$ to be a cochain complex. It is easy to see that this is a nontrivial condition on $X$. For example, suppose $X$ consists of the faces of a single 2-simplex $[x_0, x_1, x_2] \!\! \mod \bZ^n$. Then $X \cong X(F^\bullet)$ exactly when $F^\bullet$ is a three-term sequence
	$$ \cdots \to 0 \to R_n \xrightarrow{c_{m_1} z^{m_1}} R_n \xrightarrow{c_{m_2} z^{m_2}} R_n \to 0 \to \cdots $$
	where $m_1, m_2 \in \bZ^n$ and $c_{m_1}, c_{m_2} \in \bC^\times$. But such a sequence is never a cochain complex. In general, we are asking that the coefficients in the reconstructed complex $F^\bullet$ admit a solution to the equations entailed by $d^2 = 0$, and we do not know a combinatorial condition on $X$ that characterizes when this is possible. 
\end{Remark}

\section{Mirror complexes}\label{sec:sheaves}

In this section we associate a complex $C^\bullet(F^\bullet)$ of constructible sheaves on $T^n$ to a bounded complex $F^\bullet$ of free $R_n$-modules (Definition \ref{def:sheafonX}). It is a sum of pushforwards of constant sheaves on certain contractible subsets $S_i \subset \bR^n$ indexed by the rank-one summands of $F^\bullet$. Their images in $T^n$ cover $T(F^\bullet)$, and in particular the support of $C^\bullet(F^\bullet)$ is exactly $T(F^\bullet)$. These $S_i$ admit an inductive characterization, which in a sense explains the direct definition of $T(F^\bullet)$ in the previous section (Proposition~\ref{prop:SiofF}). Our main result is that the complex $F^\bullet$ is mirror to $C^\bullet(F^\bullet)$ in the relevant sheaf-theoretic sense (Theorem \ref{thm:mirrorcomplex}). 

We retain the notation and conventions of the previous section. In particular, $F^\bullet$ is a bounded sequence of finite-rank free $R_n$-modules, $I$ is an index set for a homogeneous basis, and we have a subset $\Exp_{ij} \subset \bZ^n$ for all $i,j \in I$. 

\begin{Definition}\label{def:SiofF}
	We write $\{S_i(F^\bullet)\}_{i \in I}$ for the following collection of subsets of $\bR^n$. First we choose a point $x_i \in \bR^n$ for all $i \in I$. Then we set
	\begin{equation}\label{eq:SiofFdef}
	S_i(F^\bullet) := \bigcup_{k \geq 0}\left(\bigcup_{i_1, \dotsc, i_k \in I} \left(\bigcup_{ \substack{(m_1,\dotsc,m_k) \in \\ \Exp_{i_1 i} \times \cdots \times \Exp_{i_k i_{k-1}}}} [x_{i}, x_{i_1} + m_1, \dotsc, x_{i_k} + \sum_{\ell = 1}^{k} m_\ell]\right)\right)
	\end{equation}
	the $k = 0$ subset on the right being interpreted as the point $[x_i]$. We write $S_i$ for $S_i(F^\bullet)$ when~$F^\bullet$ is understood. 
\end{Definition}

\begin{Proposition}
The union of the images of the $S_i$ in $T^n$ is exactly $T(F^\bullet)$. 
\end{Proposition}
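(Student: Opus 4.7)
The claim follows by direct comparison of Definitions \ref{def:XFbullet} and \ref{def:SiofF}. My plan is to verify the two containments separately, each of which amounts to matching up the indexing sets.

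For $\bigcup_{i \in I} \pi(S_i) \subset T(F^\bullet)$: fix $i \in I$ and consider a point $\theta \in \pi(S_i)$. By definition of $S_i$, there exist $k \geq 0$, indices $i_1, \dotsc, i_k \in I$, and $(m_1, \dotsc, m_k) \in \Exp_{i_1 i} \times \cdots \times \Exp_{i_k i_{k-1}}$ such that $\theta$ lies in the image of the simplex $[x_i, x_{i_1} + m_1, \dotsc, x_{i_k} + \sum_{\ell = 1}^{k} m_\ell] \subset \bR^n$ under $\pi$. Setting $i_0 := i$, this is precisely a simplex appearing in $X(F^\bullet)_k$ (after reduction mod $\bZ^n$), so $\theta \in T(F^\bullet)$.

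For $T(F^\bullet) \subset \bigcup_{i \in I} \pi(S_i)$: any $\theta \in T(F^\bullet)$ lies in the image of some $k$-simplex $[x_{i_0}, x_{i_1} + m_1, \dotsc, x_{i_k} + \sum_{\ell=1}^{k} m_\ell] \!\!\mod \bZ^n \in X(F^\bullet)_k$, with $(m_1,\dotsc,m_k) \in \Exp_{i_1 i_0} \times \cdots \times \Exp_{i_k i_{k-1}}$. Taking $i := i_0$ in (\ref{eq:SiofFdef}), this same simplex appears (in $\bR^n$, before taking the quotient) inside $S_i$. Hence $\theta$ lies in the image of $S_i$ under $\pi$.

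Since the statement is essentially a tautological rearrangement of the two definitions, I expect no real obstacle; the only minor bookkeeping issue is the asymmetric role of the initial vertex $x_i$ in (\ref{eq:SiofFdef}) versus the free first index $i_0$ in Definition \ref{def:XFbullet}, which is resolved by letting $i$ range over all of $I$ in the union $\bigcup_i \pi(S_i)$.
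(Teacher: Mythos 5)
Your proof is correct and takes the same approach as the paper, which simply observes in one sentence that the two definitions list exactly the same linear simplices, just grouped by initial vertex $i$ in Definition \ref{def:SiofF} rather than by dimension $k$ in Definition \ref{def:XFbullet}. Your two-containment write-up is a spelled-out version of that observation.
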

\begin{proof}
The linear simplices appearing in Definition \ref{def:SiofF} are exactly those appearing in Definition~\ref{def:XFbullet}, now grouped by initial vertex rather than dimension. 
\end{proof}

We have the following inductive characterization of these subsets. As a topological space, it says that $S_i$ is the cone over a space glued out of $\Exp_{ji}$ copies of each $S_j$ (or is a point if each $\Exp_{ji}$ is empty). This is then mapped to $\bR^n$ by sending the cone point to $x_i$, sending the copy of $S_j$ labeled by $m \in \Exp_{ji}$ to $S_j + m$, and then linearly interpolating. To state this formally we use the notation 
$$[x,S] = \{t x + (1-t)y\, |\, y \in S, t \in [0,1]\}$$
for a point $x \in \bR^n$ and a nonempty subset $S \subset \bR^n$, setting $[x,S] = \{x\}$ if $S$ is empty.  Note that since $\Exp_{ji}$ is empty when $\deg(j) \leq \deg(i)$, this description uniquely determines the $S_i$ by descending induction on $\deg(i)$. One can also check that (\ref{eq:SiofF}) is redundant in the following sense: one obtains the same subset if the union is only over $j$ with $\deg(j) = \deg(i) + 1$.  

\begin{Proposition}\label{prop:SiofF}
	The $S_i$ are uniquely characterized by the formula 
	\begin{equation}\label{eq:SiofF} S_i = \bigcup_{j \in I} \left(\bigcup_{m \in \Exp_{ji}} [x_i,S_j + m ]\right). 
	\end{equation}
	In particular, the $S_i$ are contractible, compact, subanalytic subsets such that $S_j + m \subset S_i$ for all $i, j \in I$ and $m \in \Exp_{ji}$. 
\end{Proposition}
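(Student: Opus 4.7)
The plan is to establish the recursion~(\ref{eq:SiofF}) first, then deduce both uniqueness and the topological assertions by descending induction on $\deg(i)$. To prove~(\ref{eq:SiofF}), I would show each side contains the other. For the $\subseteq$ direction, take a simplex
$$\sigma = [x_i, x_{i_1} + m_1, x_{i_2} + m_1 + m_2, \dotsc, x_{i_k} + \sum_{\ell=1}^k m_\ell]$$
appearing in Definition~\ref{def:SiofF} with $k \geq 1$. Its face opposite $x_i$, translated by $-m_1$, is the simplex $[x_{i_1}, x_{i_2} + m_2, \dotsc, x_{i_k} + \sum_{\ell=2}^k m_\ell]$, which by Definition~\ref{def:SiofF} lies in $S_{i_1}$ (witnessed by $(m_2, \dotsc, m_k) \in \Exp_{i_2 i_1} \times \cdots \times \Exp_{i_k i_{k-1}}$). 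Hence $\sigma \subseteq [x_i, S_{i_1} + m_1]$ with $m_1 \in \Exp_{i_1 i}$. The $k = 0$ case contributes $\{x_i\}$, which is contained in every $[x_i, S_j + m]$ and equals the right-hand side by convention when no $\Exp_{ji}$ is nonempty. The $\supseteq$ direction is symmetric: prepending $x_i$ to any simplex witnessing membership in $S_j + m$ produces a simplex witnessing membership in $S_i$.

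Since $\Exp_{ji}$ is empty unless $\deg(j) > \deg(i)$, the right-hand side of~(\ref{eq:SiofF}) depends only on $S_j$ of strictly larger degree, and the boundedness of $F^\bullet$ makes every $\Exp_{ji}$ empty once $\deg(i)$ exceeds a threshold. Hence~(\ref{eq:SiofF}) determines the $S_i$ uniquely by descending induction on $\deg(i)$, with base case $S_i = \{x_i\}$. The same induction yields the remaining properties. In the base case $\{x_i\}$ is trivially compact, contractible, and subanalytic. For the inductive step, the union in~(\ref{eq:SiofF}) is finite because $I$ and each $\Exp_{ji}$ are finite, and each $[x_i, S_j + m]$ is the image of the compact subanalytic set $[0,1] \times (S_j + m)$ under the (proper, polynomial) map $(t,y) \mapsto t x_i + (1-t) y$, hence compact and subanalytic once the inductive hypothesis is in place. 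Contractibility follows from star-shapedness at $x_i$: every point of $S_i$ lies in some simplex of Definition~\ref{def:SiofF} which already has $x_i$ as a vertex, so the straight-line segment to $x_i$ stays in that simplex, and the resulting linear homotopy realizes a deformation retraction onto $\{x_i\}$. The inclusion $S_j + m \subset S_i$ for $m \in \Exp_{ji}$ is then immediate from~(\ref{eq:SiofF}) by taking the $t = 0$ slice of $[x_i, S_j + m]$.

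The main obstacle I anticipate is the bookkeeping in the $\subseteq$ direction of~(\ref{eq:SiofF}), where one has to match a simplex in $S_i$ — after peeling off the vertex $x_i$ and translating by $-m_1$ — with a simplex of the inductively constructed $S_{i_1}$, and verify that $m_1$ genuinely lies in $\Exp_{i_1 i}$ rather than some larger set. This is purely a matter of unwinding Definitions~\ref{def:Mij} and~\ref{def:SiofF}, but one has to be careful with the indexing convention in Definition~\ref{def:Mij} to get the right direction of the iterated $\Exp$-chain. Everything after (\ref{eq:SiofF}) is formal descending induction.
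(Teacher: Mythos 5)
Your proposal is correct and follows essentially the same route as the paper: both verify the recursion~(\ref{eq:SiofF}) by a direct comparison of the simplices in Definition~\ref{def:SiofF} with those in the right-hand side (the paper phrases the matching as a single index-relabeling bijection, you phrase it as two containments via ``peeling off'' and ``prepending'' the vertex $x_i$, but it is the same combinatorial correspondence), and both then deduce uniqueness and the topological properties by descending induction on $\deg(i)$, with the base case handled by the convention that $S_i = \{x_i\}$ when every $\Exp_{ji}$ is empty. Your extra attention to the $k=0$ term and the explicit argument for subanalyticity are welcome additions but do not change the structure of the argument.
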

\begin{proof}
The right-hand side is $[x_i]$ when $\Exp_{ji}$ is empty for all $j \in I$, hence in particular when $\deg(i) = 0$, agreeing with Definition \ref{def:SiofF}. Now suppose $\Exp_{ji}$ is nonempty for some $j \in I$ and that the claim is true for all $j \in I$ with $\deg(j) > \deg(i)$. Since $\Exp_{ji}$ can only be nonempty if $\deg(j) > \deg(i)$, the right-hand side of (\ref{eq:SiofF}) is then the union over $j \in I$, $m \in \Exp_{ji}$, $k \geq 0$, $i_1, \dotsc, i_k \in I$, and $(m_1,\dotsc,m_k) \in \Exp_{i_1 j} \times \cdots \times \Exp_{i_k i_{k-1}}$ of the subsets
\begin{align*}
	[x_i,[x_{j}, x_{i_1} + m_1, \dotsc, x_{i_k} + \sum_{\ell = 1}^{k} m_\ell] + m ]. 
\end{align*}
But relabeling indices as $k' = k + 1$, $i'_1 = j$, $m'_1 = m$, and $i'_\ell = i_{\ell - 1}$, $m'_\ell = m_{\ell-1}$ for $1 \leq \ell \leq k'$, this is the same as the union over $k' \geq 1$, $i'_1, \dotsc, i'_{k'} \in I$, and $(m'_1,\dotsc,m'_{k'}) \in \Exp_{i'_1 i} \times \cdots \times \Exp_{i'_{k'} i'_{k'-1}}$ of the subsets 
\begin{align*}
	[x_{i}, x_{i'_1} + m'_1, \dotsc, x_{i_k} + \sum_{\ell = 1}^{k'} m'_\ell]. 
\end{align*}
This is the same union appearing in (\ref{eq:SiofFdef}), since the hypothesis that $\Exp_{ji}$ is nonempty for some $j \in I$ makes the $k=0$ term in (\ref{eq:SiofFdef}) redundant. Finally, contractibility is evident since (\ref{eq:SiofF}) deformation retracts onto $\{x_i\}$, and compactness and constructibility are since (\ref{eq:SiofF}) is a finite union of linear simplices. 
\end{proof}

For the following definition, we package the coefficients in the differentials of $F^\bullet$ as follows. For any $i,j \in I$ with $\deg(i) = \deg(j) + 1$, we define $c_{ijm}\in \bC$ for $m \in \Exp_{ij}$ by the condition that the $ij$-entry of the differential $d^k$ is $\sum_{m \in {\Exp_{ij}}} c_{ijm} z^m$. We also recall our convention that given a locally closed subanalytic set $i: X \into M$, we write $\bC_X$ and $\omega_X$ for $i_! \bC_X$ and $i_! \omega_X$ when $i_!$ is clear from context. Note that if $X$ is closed then $i_* \cong i_!$ and $\omega_X \cong \bD_M \bC_X$. 

\begin{Definition}\label{def:sheafonX}
We define a sequence $C^\bullet(F^\bullet)$ of constructible sheaves on $T^n$ as follows. For $i,j \in I$ and $m \in \Exp_{ij}$, write $\phi^m_{ij}$ for the composition of the canonical maps $\pi_* \bC_{S_j} \to \pi_* \bC_{S_i + m}$ and $\pi_* \bC_{S_i + m} \congto \pi_* \bC_{S_i}$. We then set $$ C^\bullet(F^\bullet) = \cdots \to \oplus_{i \in I_k} \pi_* \bC_{S_i} \xrightarrow{d^k} \oplus_{i \in I_{k+1}} \pi_* \bC_{S_i} \to \cdots, $$
where the $ij$-entry of $d^k$ is $\sum_{m \in \Exp_{ij}} c_{ijm} \phi^m_{ij}$. 
\end{Definition}

Next we show that mirror symmetry recovers $F^\bullet$ from $C^\bullet(F^\bullet)$. As discussed in Section \ref{sec:torisheaves}, the mirror $R_n$-module of a sheaf $\cF\in \Sh(T^n)$ is its image under 
$$ \Hom(\pi_! \omega_{\bR^n},\wr(-)): \Sh(T^n) \to \Mod_{R_n},$$
where we use the isomorphism $\End(\pi_! \omega_{\bR^n}) \cong R_n$ of Proposition \ref{prop:endcomp}. 

In stating the following result, we allow for generalizations of $C^\bullet(F^\bullet)$ defined using other subsets $S_i \subset \bR^n$ than those considered above. This is because the proof only depends on the properties established in Proposition \ref{prop:SiofF} rather than the specific details of Definition \ref{def:SiofF}. Indeed, Definition \ref{def:SiofF} (hence implicitly Definition \ref{def:XFbullet}) should be understood as engineering the most natural ``minimal'' subsets with these properties. 

\begin{Theorem}\label{thm:mirrorcomplex}
	Suppose for each $i \in I$ we have a locally closed subanalytic set $S_i \subset \bR^n$ which is compact and contractible. For any $i, j \in I$ and $m \in \Exp_{ij}$ assume that $S_i + m \subset S_j$, and write $\phi^m_{ij}$ for the composition of the canonical maps $\pi_* \bC_{S_j} \to \pi_* \bC_{S_i + m}$ and $\pi_* \bC_{S_i + m} \congto \pi_* \bC_{S_i}$.  Set
	$$ C^\bullet = \cdots \to \oplus_{i \in I_k} \pi_* \bC_{S_i} \xrightarrow{d^k} \oplus_{i \in I_{k+1}} \pi_* \bC_{S_i}  \to \cdots, $$
	where the $ij$-entry of $d^k$ is $\sum_{m \in \Exp_{ij}} c_{ijm} \phi^m_{ij}$. Then there is a canonical isomorphism 
	\begin{equation}\label{eq:mainiso}
		\Hom(\pi_!\omega_{\bR^n}, \wr C^\bullet) \cong F^\bullet
		\end{equation}
	of sequences of $R_n$-modules. In particular, this holds for $C^\bullet = C^\bullet(F^\bullet)$. If $F^\bullet$ is a cochain complex, then so is $C^\bullet$. 
\end{Theorem}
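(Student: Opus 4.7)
Since $\pi_!\omega_{\bR^n}$ is compact in $\Loc(T^n)$ and $\wr$ is a left adjoint, the functor $\Hom(\pi_!\omega_{\bR^n}, \wr(-))$ preserves finite direct sums, reducing the theorem to two points: (i) for each $i$, a canonical identification $\Hom(\pi_!\omega_{\bR^n}, \wr \pi_*\bC_{S_i}) \cong R_n$, and (ii) under these identifications, each $\phi^m_{ij}$ corresponds to multiplication by $z^m$. Granted (i) and (ii), the $ij$-entry $\sum_m c_{ijm}\phi^m_{ij}$ of the differential in $C^\bullet$ matches the $ij$-entry $\sum_m c_{ijm} z^m$ in $F^\bullet$ on the nose, yielding the stated iso of sequences. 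Specializing to $S_i = S_i(F^\bullet)$ from Proposition~\ref{prop:SiofF} gives the $C^\bullet = C^\bullet(F^\bullet)$ statement at no extra cost.

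For (i), the key input is that $S_i$ is compact and contractible. Compactness gives $\pi_*\bC_{S_i} \cong \pi_!\bC_{S_i}$. By Proposition~\ref{prop:W_as_convolution} together with the identification $\wr\bC_{\{\pi(0)\}} \cong \pi_!\omega_{\bR^n}$ noted after Proposition~\ref{prop:monoidalwrapping}, we rewrite
\[
\wr \pi_*\bC_{S_i} \cong \wr \bC_{\{\pi(0)\}} \conv \pi_*\bC_{S_i} \cong \pi_!\omega_{\bR^n} \conv \pi_*\bC_{S_i}.
\]
Factoring multiplication through $\bR^{2n} \xrightarrow{a} \bR^n \xrightarrow{\pi} T^n$ with $a(x,y) = x+y$, the final convolution unfolds as $\pi_! a_!(\omega_{\bR^n} \boxtimes \bC_{S_i})$. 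The projection formula together with $H^*_c(S_i;\bC) = \bC$ (compact contractible) collapses it to $\pi_!\omega_{\bR^n}$, and Proposition~\ref{prop:endcomp} then yields $\Hom(\pi_!\omega_{\bR^n}, \pi_!\omega_{\bR^n}) \cong R_n$.

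For (ii), $\phi^m_{ij}$ factors as $\pi_*\bC_{S_j} \to \pi_*\bC_{S_i+m} \xrightarrow{\sim} \pi_*\bC_{S_i}$. After applying $\wr$, both $\wr\pi_*\bC_{S_j}$ and $\wr\pi_*\bC_{S_i+m}$ become canonically $\pi_!\omega_{\bR^n}$ by (i), and the first arrow is identified with the identity. The second arrow, induced by $\pi\circ\tau_m = \pi$, mirrors to the canonical isomorphism $\omega_{\bR^n} \cong \tau_{m!}\omega_{\bR^n}$, which Proposition~\ref{prop:endcomp} identifies with $z^m \in R_n \cong \End(\pi_!\omega_{\bR^n})$. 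This gives the desired entry-wise match of differentials.

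For the last claim, suppose $F^\bullet$ is a cochain complex. Using the composition rule $\phi^{m_2}_{i'i}\circ \phi^{m_1}_{ij} = \phi^{m_1+m_2}_{i'j}$, which follows from the inclusions $S_{i'}+m_1+m_2 \subset S_i+m_1 \subset S_j$ and the iterative definition of $\Exp_{i'j}$ in Definition~\ref{def:Mij}, the $(i',j)$-entry of $d^{k+1}\circ d^k$ in $C^\bullet$ equals $\sum_m \bigl(\sum_{i,\, m_1+m_2=m} c_{ij m_1} c_{i'i m_2}\bigr)\phi^m_{i'j}$, whose scalar coefficients are exactly the Laurent polynomial coefficients of the $(i',j)$-entry of $d^{k+1}\circ d^k$ in $F^\bullet$; if $F^\bullet$ is a cochain complex these all vanish, forcing $d^2 = 0$ in $C^\bullet$ term-by-term. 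The main obstacle I anticipate is the convolution computation in (i): shift conventions (in particular $\omega_{\bR^n} = \bC_{\bR^n}[n]$) must be tracked carefully through the projection formula to land on $\pi_!\omega_{\bR^n}$ rather than a shift of it. A secondary technical point is verifying that the canonical iso in (i) respects basepoints compatibly with (ii), so that the restriction factor of $\phi^m_{ij}$ contributes $1$ rather than a nontrivial scalar multiple.
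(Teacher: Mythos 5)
Your proof is correct but takes a genuinely different route from the paper's. The paper establishes the term\nobreakdash-wise identification $\Hom(\pi_!\omega_{\bR^n},\wr\pi_*\bC_{S_i})\cong R_n$ by duality: Lemma~\ref{lem:duality} rewrites the functor as $\Hom(\shortminus\bD(-),\pi_!\omega_{\bR^n})$ using the rigidity of $\Sh^b(T^n)$ (Proposition~\ref{prop:rigidity}), and Lemma~\ref{lem: wrapping-of-contractbles1} then computes $\shortminus\bD\pi_*\bC_{\shortminus S}\cong\pi_!\omega_S$ and shows $\wr$ of the canonical map $v_S\colon\omega_S\to\omega_{\bR^n}$ is an isomorphism; Lemma~\ref{lem: wrapping-of-contractbles2} chases the resulting $\psi_S$ maps to identify $\wr\phi^m$ with $z^m$. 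You instead apply Proposition~\ref{prop:W_as_convolution} to replace $\wr$ by $\pi_!\omega_{\bR^n}\conv(-)$ and compute this convolution directly: writing $m\circ(\pi\times\pi)=\pi\circ a$ with $a$ the addition map on $\bR^{2n}$, base change plus the projection formula reduce to $p_{S_i!}\bC_{S_i}\cong\bC$ from compactness and contractibility. This avoids Lemma~\ref{lem:duality} and Proposition~\ref{prop:rigidity} entirely, at the cost of more delicate bookkeeping of the canonical isomorphisms (the translation equivariance of $a_!(\omega_{\bR^n}\boxtimes\bC_S)\cong\omega_{\bR^n}$ and the compatibility with restriction) which Lemma~\ref{lem: wrapping-of-contractbles2} handles more systematically via the diagram~(\ref{eq:bigsquare}). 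You are right to flag the isomorphism\nobreakdash-tracking issues at the end; they do resolve favorably, but spelling out why the restriction arrow becomes the identity (the induced map on $H^0$ of connected compact sets sends $1\mapsto 1$) and why the direct contractibility trivialization of $a_!(\omega_{\bR^n}\boxtimes\bC_{S_i+m})$ agrees with the $\tau_{m!}$\nobreakdash-translated one would make the argument airtight. Your treatment of the cochain complex claim via $\phi^{m_2}_{i'i}\circ\phi^{m_1}_{ij}=\phi^{m_1+m_2}_{i'j}$ coincides exactly with the paper's.
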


The theorem will result from the following three lemmas. We begin by noting that since $\wr$ is a left adjoint, it is not immediately clear that morphisms into $\wr C^\bullet$ are computable. Our first lemma gives a dual formulation of this computation that will turn out to be more accessible. Here we write $\cF \mapsto \shortminus \cF$ for the autoequivalence of $\Sh(T^n)$ given by pullback along $\theta \mapsto \shortminus \theta$, and recall that $\Sh^b(T^n) \subset \Sh(T^n)$ denotes the subcategory of constructible sheaves. 

\begin{Lemma}\label{lem:duality}
There is a canonical isomorphism 
\begin{equation}\label{eq:duality} \Hom(\pi_! \omega_{\bR^n},\wr(-)) \cong \Hom(\shortminus \bD(-), \pi_! \omega_{\bR^n}) 
	\end{equation}
of functors $\Sh^b(T^n) \to \Mod_{R_n}$. 
\end{Lemma}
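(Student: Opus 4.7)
The plan is to reformulate $\wr$ as convolution and then apply the rigidity of $\Sh^b(T^n)$ from Proposition \ref{prop:rigidity}. First I would use Proposition \ref{prop:W_as_convolution} together with the remark following Proposition \ref{prop:monoidalwrapping} (which identifies $\wr \bC_{\{\pi(0)\}}$ with $\pi_! \omega_{\bR^n}$) to write $\wr \cF \cong \pi_! \omega_{\bR^n} \conv \cF$. This replaces the left adjoint $\wr$ with a more tractable convolution, so that Homs into $\wr \cF$ become computable by the duality of the convolution bifunctor.

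Next, since every $\cF \in \Sh^b(T^n)$ is dualizable with dual $\shortminus \bD \cF$ by Proposition \ref{prop:rigidity}, the functor $(-) \conv (\shortminus \bD \cF)$ is right adjoint to $(-) \conv \cF$. Applying this adjunction gives
$$\Hom(\pi_! \omega_{\bR^n}, \pi_! \omega_{\bR^n} \conv \cF) \cong \Hom(\pi_! \omega_{\bR^n} \conv (\shortminus \bD \cF), \pi_! \omega_{\bR^n}).$$
Reapplying the first step rewrites the right-hand side as $\Hom(\wr(\shortminus \bD \cF), \pi_! \omega_{\bR^n})$, and the $\wr \dashv \incl$ adjunction (applied with the local system $\pi_! \omega_{\bR^n}$ as target) then reduces this to $\Hom(\shortminus \bD \cF, \pi_! \omega_{\bR^n})$, producing the desired isomorphism at the level of spectra.

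The main obstacle I anticipate is verifying that this chain of isomorphisms respects the $R_n$-module structures. On the left, $R_n = \End(\pi_! \omega_{\bR^n})$ acts by precomposition on the source; on the right, it acts by postcomposition on the target. My plan for handling this is to first argue that since $\wr \cF$ is a local system and $\pi_! \omega_{\bR^n}$ is the monoidal unit of $\Loc(T^n)$ by Proposition \ref{prop:monoidalwrapping}, the left-hand action agrees with postcomposition by $\alpha \conv \id_\cF \in \End(\pi_! \omega_{\bR^n} \conv \cF)$ for each $\alpha \in \End(\pi_! \omega_{\bR^n})$. Once so rephrased, each of the isomorphisms above is natural in the target copy of $\pi_! \omega_{\bR^n}$, so the postcomposition action is preserved step by step, yielding the claimed isomorphism as functors valued in $\Mod_{R_n}$.
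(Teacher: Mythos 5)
Your proof is correct and reaches the same conclusion using the same core ingredients as the paper: Proposition \ref{prop:W_as_convolution}, the rigidity of $\Sh^b(T^n)$ from Proposition \ref{prop:rigidity}, the self-duality of $\pi_! \omega_{\bR^n}$ as the monoidal unit, and the $\wr \dashv \incl$ adjunction. The route is genuinely different in presentation, though. The paper's key step is that a symmetric monoidal functor between rigid monoidal categories intertwines duality functors, so that $\wr(\shortminus \bD(-)) \cong (\wr(-))^\vee$; it then packages the claimed isomorphism as the commutativity of a square of categories and functors. You instead deploy the adjunction between $(-) \conv \cF$ and $(-) \conv (\shortminus \bD \cF)$ directly on Hom objects, which amounts to the same underlying structure but is more hands-on. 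What the paper's diagram formulation buys is that naturality — and hence the compatibility with $R_n$-module structures — is automatic; your computational approach requires the separate module-structure check, which you correctly anticipate and handle via the standard unit/Eckmann–Hilton observation that precomposition by $\alpha \in \End(\pi_!\omega_{\bR^n})$ on $\Hom(\pi_!\omega_{\bR^n}, X)$ agrees with postcomposition by the action of $\alpha$ on $X$. One small slip in wording: you say $(-) \conv (\shortminus \bD \cF)$ is \emph{right} adjoint to $(-) \conv \cF$, but the displayed isomorphism $\Hom(\pi_!\omega_{\bR^n}, \pi_!\omega_{\bR^n} \conv \cF) \cong \Hom(\pi_!\omega_{\bR^n} \conv (\shortminus \bD \cF), \pi_!\omega_{\bR^n})$ is an instance of $(-) \conv (\shortminus \bD \cF)$ being \emph{left} adjoint to $(-) \conv \cF$. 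Since a dualizable object furnishes both adjunctions (using $(\shortminus \bD \cF)^\vee \cong \cF$), the conclusion is unaffected, but the statement should be phrased with the correct handedness.
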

\begin{proof}
Recall that $\wr$ takes $\Sh^b(T^n)$ to the subcategory $\Loc^c(T^n)$ of compact objects in $\Loc(T^n)$ \cite[Lem. 5.2]{Kuo23}. Also recall that convolution restricts to a monoidal structure on $\Loc(T^n)$ such that $\pi_! \omega_{\bR^n}$ is the unit and such that $\wr$ is monoidal (Proposition \ref{prop:monoidalwrapping}). It follows that compact objects of $\Loc(T^n)$ are dualizable, since perfect $R_n$-modules are dualizable and $\Hom(\pi_! \omega_{\bR^n},-)$ restricts to a monoidal equivalence between these (Proposition \ref{prop:monoidalmirror}). Observe now that the functor $(-)^\vee$ of taking duals with respect to convolution in $\Loc^c(T^n)$ fits into the following diagram. 
\begin{equation*}
	\begin{tikzpicture}
		[baseline=(current  bounding  box.center),thick,>=\arrtip]
		\node (aa) at (0,0) {$\Sh^b(T^n)$};
		\node (ab) at (3.5,0) {$\Loc^c(T^n)$};
		\node (c) at (7,-.75) {$\Mod_{R_n}$};
		\node (ba) at (0,-1.5) {$\Sh^b(T^n)^{\op}$};
		\node (bb) at (3.5,-1.5) {$\Loc^c(T^n)^{\op}$};
		\draw[->] (aa) to node[above] {$\wr $} (ab);
		\draw[->] (ab) to node[above right,pos=.3] {$\Hom(\pi_! \omega_{\bR^n},-) $} (c);
		\draw[->] (ba) to node[above] {$\wr $} (bb);
		\draw[->] (bb) to node[below right,pos=.3] {$\Hom(-, \pi_! \omega_{\bR^n}) $} (c);
		\draw[->] (aa) to node[left, pos=.45] {$\shortminus \bD $} node[right] {\raisebox{.5ex}{\rotatebox{90}{$\sim$}}} (ba);
		\draw[->] (ab) to node[left] {\raisebox{.5ex}{\rotatebox{90}{$\sim$}}} node[right, pos=.45]  {$(-)^\vee $} (bb);
	\end{tikzpicture}
\end{equation*}
The left square exists since $\shortminus \bD$ is the functor of taking duals with respect to convolution in $\Sh^b(T^n)$ (Proposition \ref{prop:rigidity}), and since a monoidal functor preserves dualizable objects and thus intertwines the duality functors if its source and target are rigid. The right triangle exists since $(\pi_! \omega_{\bR^n})^\vee \cong \pi_! \omega_{\bR^n}$ by virtue of $\pi_! \omega_{\bR^n}$ being the monoidal unit in $\Loc^c(T^n)$. The claim now follows by comparing the outer compositions around this diagram. 
\end{proof}

\begin{Lemma}\label{lem: wrapping-of-contractbles1}
	Let $S \subset \bR^n$ be a locally closed subanalytic set and $v_S: \omega_S \to \omega_{\bR^n}$ the canonical map. If $S$ is contractible, then $W v_S: W \omega_S \to \omega_{\bR^n}$ is an isomorphism. If additionally $S$ is compact, then we have an isomorphism $\shortminus \bD \pi_* \bC_{\shortminus S} \congto \pi_!\omega_S$. Its composition with $\pi_!v_S: \pi_! \omega_S \to \pi_! \omega_{\bR^n}$ gives rise to a map $\psi_{\shortminus S}: \pi_! \omega_{\bR^n} \to W \pi_* \bC_{\shortminus S}$ under (\ref{eq:duality}), and $\psi_{\shortminus S}$ is also an isomorphism. 
	In particular, the map
	\begin{equation}\label{eq:CStrivialization}
		R_n \cong \End(\pi_! \omega_{\bR^n}) \to \Hom(\pi_! \omega_{\bR^n},\wr \pi_* \bC_{\shortminus S})
	\end{equation}
	given by composition with $\psi_{\shortminus S}$ is an isomorphism. 
\end{Lemma}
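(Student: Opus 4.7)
The plan is to verify the three assertions in order, combining Parts 1 and 2 to deduce Part 3.

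For Part 1, I would show $Wv_S$ is an isomorphism by testing against compact generators of $\Loc(\bR^n)$. Since $\Loc(\bR^n) \simeq \Mod_\bC$ is generated by the shifts of $\bC_{\bR^n}$, by the adjunction $W \dashv \iota$ it suffices to show the pullback
\[ v_S^*:\Hom(\omega_{\bR^n}, \bC_{\bR^n}[k]) \to \Hom(\omega_S, \bC_{\bR^n}[k]) \]
is an isomorphism for every $k$. Applying Verdier duality to each side via $\bD(i_!\omega_S) \cong i_*\bC_S$ and $\bD \omega_{\bR^n} \cong \bC_{\bR^n}$, both Homs become identified with $\Gamma(S, \bC_S)[k-n]$ and $\Gamma(\bR^n, \bC_{\bR^n})[k-n]$ respectively, with the map given by pullback along the inclusion $S \into \bR^n$. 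Since both $S$ and $\bR^n$ are contractible, this restriction is an iso.

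For Part 2, the argument is formal Verdier duality: compactness of $S$ forces $\pi_*\bC_{\shortminus S} = \pi_!\bC_{\shortminus S}$, and the identity $\bD\pi_! \cong \pi_*\bD$ together with $\bD\bC_{\shortminus S} \cong \omega_{\shortminus S}$ yields $\bD\pi_!\bC_{\shortminus S} \cong \pi_*\omega_{\shortminus S} \cong \pi_!\omega_{\shortminus S}$ (compactness again). Composing with pullback by $\shortminus 1$, which commutes with $\pi_!$ since $\pi \circ (\shortminus 1) = (\shortminus 1) \circ \pi$ and which sends $\omega_{\shortminus S}$ to $\omega_S$, gives the claimed iso.

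For Part 3, I would first identify
\[ W\pi_*\bC_{\shortminus S} \cong W\pi_!\bC_{\shortminus S} \cong \pi_!W\bC_{\shortminus S} \cong \pi_!\omega_{\bR^n}, \]
using compactness, Lemma \ref{lem: compatibility-pushforward-local-system}, and a computation parallel to Part 1 showing $W\bC_{\shortminus S} \cong \omega_{\bR^n}$ for compact contractible $\shortminus S$ (with $\bC_S$ in place of $\omega_S$, and using $\bD\bC_S \cong \omega_S$ along with the fact that $\Gamma(\bR^n, i_*\omega_S) \cong H_*^{BM}(S) \cong H_*(S) \cong \bC$). Under these identifications, $\psi_{\shortminus S}$ is a map $\pi_!\omega_{\bR^n} \to \pi_!\omega_{\bR^n}$. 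I then claim this map equals the image of $\pi_!v_S$ under $W$, modulo the identifications $W\pi_!\omega_S \cong \pi_!W\omega_S \cong \pi_!\omega_{\bR^n}$ from Lemma \ref{lem: compatibility-pushforward-local-system} and Part 1. Granting this, $W\pi_!v_S \cong \pi_!Wv_S$ is an iso by Part 1 and Lemma \ref{lem: compatibility-pushforward-local-system}, so $\psi_{\shortminus S}$ is an iso. The ``In particular'' clause follows immediately, since composition with an iso is an iso on Hom sets, and the identification $\End(\pi_!\omega_{\bR^n}) \cong R_n$ is Proposition \ref{prop:endcomp}.

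The main obstacle is the identification in Part 3 of $\psi_{\shortminus S}$ with $\pi_!(Wv_S)$. The isomorphism (\ref{eq:duality}) is constructed in the proof of Lemma \ref{lem:duality} via a diagram involving the wrapping functor $W$, the monoidal duality intertwining $\shortminus\bD$ with $(-)^\vee$, and the self-duality of the monoidal unit $\pi_!\omega_{\bR^n}$ in $\Loc^c(T^n)$. Each constituent step is formal, but tracing the specific element $\pi_!v_S$ through the chain of identifications to recognize it as $\psi_{\shortminus S}$ requires careful bookkeeping.
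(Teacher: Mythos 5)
Parts 1 and 2 are essentially correct, though Part 1 takes a different route from the paper. The paper observes that $p_!\colon \Loc(\bR^n) \to \Mod_\bC$ is an equivalence ($p$ the projection to a point), reducing via Lemma \ref{lem: compatibility-pushforward-local-system} to the statement that $p_! v_S$ — the map on homology induced by $S \into \bR^n$ — is an isomorphism of contractible things. Your approach, testing the adjunct against the shifts $\bC_{\bR^n}[k]$ and unwinding by Verdier duality, reaches the same conclusion with a little more computation; it is correct, but the two Homs in your display should be listed the other way around, and the fact that the objects $\bC_{\bR^n}[k]$ detect isomorphisms in $\Loc(\bR^n)\simeq\Mod_\bC$ deserves a word (it is special to coefficients in a field). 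Part 2 matches the paper.

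In Part 3 there is a genuine gap, and moreover the claim you hope to establish is slightly off. What falls out of the definitions is not that $\psi_{\shortminus S}$ equals $W\pi_! v_S$ after identifications, but that it equals the convolution dual $(W\pi_! v_S)^\vee$. To see this, and to close the gap without any ``careful bookkeeping,'' just recall how (\ref{eq:duality}) is built in Lemma \ref{lem:duality}: it is the composite around the outside of a diagram whose left square expresses that $W$ intertwines $\shortminus\bD$ with the convolution-duality functor $(-)^\vee$ on $\Loc^c(T^n)$, and whose right triangle uses $(\pi_!\omega_{\bR^n})^\vee \cong \pi_!\omega_{\bR^n}$. By definition, $\psi_{\shortminus S}$ is the image under (\ref{eq:duality}) of $\pi_!v_S$ (postcomposed with $\shortminus\bD\pi_*\bC_{\shortminus S}\cong\pi_!\omega_S$). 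Tracing through the diagram, this image is exactly $(W\pi_! v_S)^\vee$ composed with the canonical isomorphisms $\pi_!\omega_{\bR^n}\cong(\pi_!\omega_{\bR^n})^\vee$ and $(W\pi_!\omega_S)^\vee\cong W(\shortminus\bD\pi_!\omega_S)\cong W\pi_*\bC_{\shortminus S}$. Since a morphism in a rigid category is an isomorphism iff its dual is, and $W\pi_!v_S\cong\pi_!Wv_S$ is an isomorphism by Lemma \ref{lem: compatibility-pushforward-local-system} and Part 1, $\psi_{\shortminus S}$ is an isomorphism. With this observation your separate computation of $W\pi_*\bC_{\shortminus S}\cong\pi_!\omega_{\bR^n}$ becomes redundant — it is a consequence, not an ingredient.
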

\begin{proof}
Write $p: \bR^n \to \pt$ for the projection. Since $\bR^n$ is contractible, $p_!$ restricts to an equivalence between $\Loc(\bR^n)$ and $\Sh(\pt) \cong \Mod_\bC$. It thus suffices to show $p_! W v_S$ is an isomorphism. By Lemma \ref{lem: compatibility-pushforward-local-system} this is equivalent to $W p_! v_{S} \cong p_! v_S$ being an isomorphism. But $p_! v_S$ is the natural map from the homology of $S$ to the homology of $\bR^n$, which is an isomorphism since both are contractible. 

If additionally $S$ is compact, then the restriction of $\pi$ to $S = \supp(\omega_S)$ is proper, hence $\pi_! \omega_{S}$ is isomorphic to $\pi_* \omega_{S}$ and is constructible \cite[Prop. 8.4.8]{KS94}. It further follows that $\pi_* \bC_{\shortminus S}$ is constructible and that we have the desired isomorphism $\shortminus \bD \pi_* \bC_{\shortminus S} \cong \pi_!\omega_S$. By definition, $\psi_{\shortminus S}$ is the composition of the map $(W \pi_! v_S)^\vee: (\pi_! \omega_{\bR^n})^\vee \to (W \pi_! \omega_S)^\vee$ with the isomorphisms $\pi_! \omega_{\bR^n} \cong (\pi_! \omega_{\bR^n})^\vee$ and $(W \pi_! \omega_S)^\vee \cong \shortminus W \bD \pi_! \omega_S \cong W \pi_* \bC_{\shortminus S}$. But $W \pi_! v_S \cong \pi_! W v_S$ by Lemma~\ref{lem: compatibility-pushforward-local-system}, hence $W \pi_! v_S$ and thus $\psi_{\shortminus S}$ are isomorphisms since $W v_S$ is. 
\end{proof}

\begin{Lemma} \label{lem: wrapping-of-contractbles2}
	Let $S$ and $S'$ be compact subanalytic subsets of $\bR^n$, and suppose that $S' + m \subset S$ for some $m \in \bZ^n$. Write $\phi^m$ for the composition of the canonical maps $\pi_* \bC_{S} \to \pi_* \bC_{S' + m}$ and $\pi_* \bC_{S' + m} \congto \pi_* \bC_{S'}$. Then there is a canonical diagram
	\begin{equation}\label{eq: wrapping-of-contractbles}
	\begin{tikzpicture}
		[baseline=(current  bounding  box.center),thick,>=\arrtip]
		\node (a) at (0,0) {$\pi_! \omega_{\bR^n}$};
		\node (b) at (4.5,0) {$\pi_! \omega_{\bR^n}$};
		\node (c) at (0,-1.5) {$\wr \pi_* \bC_{S}$};
		\node (d) at (4.5,-1.5) {$\wr \pi_* \bC_{S'}$,};
		\draw[->] (a) to node[above] {$z^m $} (b);
		\draw[->] (b) to node[right] {$\psi_{S'} $} (d);
		\draw[->] (a) to node[left] {$\psi_{S} $}(c);
		\draw[->] (c) to node[above] {$\wr \phi^m $} (d);
	\end{tikzpicture}
	\end{equation}
	where $\psi_{S}$ and $\psi_{S'}$ are the isomorphisms of Lemma~\ref{lem: wrapping-of-contractbles1}
\end{Lemma}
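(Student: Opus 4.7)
The plan is to translate the desired commutativity into a statement about maps of sheaves on $\bR^n$ via the duality of Lemma \ref{lem:duality}, where both sides become manifestly equal. Under the natural iso $\Hom(\pi_! \omega_{\bR^n}, \wr(-)) \cong \Hom(\shortminus \bD(-), \pi_! \omega_{\bR^n})$, the map $\psi_S$ corresponds by construction (Lemma \ref{lem: wrapping-of-contractbles1}, combined with the identification $\shortminus \bD \pi_* \bC_S \cong \pi_! \omega_{\shortminus S}$) to $\pi_! v_{\shortminus S}: \pi_! \omega_{\shortminus S} \to \pi_! \omega_{\bR^n}$, and likewise for $\psi_{S'}$. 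Since the duality iso is $R_n$-linear---with $R_n \cong \End(\pi_! \omega_{\bR^n})$ acting by pre-composition on the left and by post-composition on the right---the commutativity in (\ref{eq: wrapping-of-contractbles}) is equivalent to the identity
\begin{equation*}
z^m \circ \pi_! v_{\shortminus S'} \;=\; \pi_! v_{\shortminus S} \circ (\shortminus \bD \phi^m)
\end{equation*}
of maps $\pi_! \omega_{\shortminus S'} \to \pi_! \omega_{\bR^n}$.

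Next I would unpack the right-hand side. The map $\phi^m$ factors as the canonical restriction $\pi_* \bC_S \to \pi_* \bC_{S' + m}$ followed by the iso $\pi_* \bC_{S' + m} \cong \pi_* \bC_{S'}$ coming from the translation $\bC_{S' + m} \cong \tau_{m!} \bC_{S'}$ via $\pi_! \tau_{m!} = \pi_!$. Dualizing and negating, $\shortminus \bD \phi^m$ becomes the composition
\begin{equation*}
\pi_! \omega_{\shortminus S'} \xrightarrow{\rho} \pi_! \omega_{\shortminus S' - m} \xrightarrow{\pi_! \iota} \pi_! \omega_{\shortminus S},
\end{equation*}
where $\rho$ is the canonical iso induced by $\omega_{\shortminus S'} \cong \tau_{m!} \omega_{\shortminus S' - m}$ and $\iota$ is dual to the restriction $\bC_{\shortminus S} \to \bC_{\shortminus S' - m}$ (which exists since $\shortminus S' - m \subset \shortminus S$). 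Functoriality of $v_{(-)}$ gives $v_{\shortminus S} \circ \iota = v_{\shortminus S' - m}$, so the RHS reduces to $\pi_! v_{\shortminus S' - m} \circ \rho$.

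For the left-hand side, Proposition \ref{prop:endcomp} identifies $z^m$ with the automorphism of $\pi_! \omega_{\bR^n}$ obtained by applying $\pi_!$ to the canonical iso $\omega_{\bR^n} \cong \tau_{m!} \omega_{\bR^n}$. Naturality of the canonical translation iso with respect to $v_{\shortminus S'}: \omega_{\shortminus S'} \to \omega_{\bR^n}$ produces a commutative square in $\Sh(\bR^n)$ whose top and bottom rows are $v_{\shortminus S'}$ and $\tau_{m!} v_{\shortminus S' - m}$, and whose vertical maps are the canonical translation isos $\omega_{\shortminus S'} \cong \tau_{m!} \omega_{\shortminus S' - m}$ and $\omega_{\bR^n} \cong \tau_{m!} \omega_{\bR^n}$. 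Applying $\pi_!$ and using $\pi_! \tau_{m!} = \pi_!$, the right vertical becomes $z^m$, the left becomes $\rho$, and the bottom row becomes $\pi_! v_{\shortminus S' - m}$, yielding $z^m \circ \pi_! v_{\shortminus S'} = \pi_! v_{\shortminus S' - m} \circ \rho$, in agreement with the simplified RHS.

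The main obstacle is bookkeeping: one must confirm that the translation iso $\rho$ implicit in the unpacking of $\shortminus \bD \phi^m$ is genuinely the same as the $\pi_!$ of $\omega_{\shortminus S'} \cong \tau_{m!} \omega_{\shortminus S' - m}$ used in the Proposition \ref{prop:endcomp} step. This holds because the relevant iso $\pi_* \bC_{S' + m} \cong \pi_* \bC_{S'}$ in $\phi^m$ is itself $\pi_!$ of the canonical translation $\bC_{S' + m} \cong \tau_{m!} \bC_{S'}$ on $\bR^n$, and $\shortminus \bD$ intertwines the canonical translation isos on $\bC$-sheaves and $\omega$-sheaves for the negated sets.
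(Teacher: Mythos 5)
Your proof is correct and follows essentially the same route as the paper: both reduce via Lemma~\ref{lem:duality} to commuting the square with $\shortminus\bD\phi^m$ and $z^m$ as horizontal maps and $\pi_!v_{\shortminus S'}$, $\pi_!v_{\shortminus S}$ as vertical maps, and both verify this by factoring $\shortminus\bD\phi^m$ into a translation isomorphism followed by a counit and invoking the naturality of the translation isomorphisms, traced back to Proposition~\ref{prop:endcomp}. The only difference is organizational: the paper records this bookkeeping as the single diagram (\ref{eq:bigsquare}) in $\Sh(\bR^n)$ using $\tau_{\shortminus m}^!$ and $\pi^!\pi_!$ and then passes to $\Sh(T^n)$ by adjunction, whereas you apply $\pi_!$ throughout and chase the resulting square directly in $\Sh(T^n)$, but the mathematical content is the same.
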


\begin{proof}
Consider the following diagram.
\begin{equation}\label{eq:bigsquare}
	\begin{tikzpicture}
		[baseline=(current  bounding  box.center),thick,>=\arrtip]
		\node (aa) at (0,0) {$\omega_{\shortminus S'}$};
		\node (ab) at (3.5,0) {$\tau_{\shortminus m}^! \omega_{\shortminus S' \shortminus m}$};
		\node (ac) at (7,0) {$\pi^! \pi_! \omega_{\shortminus S' \shortminus m}$};
		\node (ad) at (10.5,0) {$\pi^! \pi_! \omega_{\shortminus S}$};
		\node (ba) at (0,-1.5) {$\omega_{\bR^n}$};
		\node (bb) at (3.5,-1.5) {$\tau_{\shortminus m}^! \omega_{\bR^n}$};
		\node (bc) at (10.5,-1.5) {$\pi^! \pi_! \omega_{\bR^n}$.};
		\draw[->] (aa) to node[above] {$ $} (ab);
		\draw[->] (ab) to node[above] {$ $} (ac);
		\draw[->] (ac) to node[above] {$ $} (ad);
		\draw[->] (ba) to node[above] {$ $} (bb);
		\draw[->] (bb) to node[above] {$ $} (bc);
		\draw[->] (aa) to node[left] {$v_{\shortminus S'} $}(ba);
		\draw[->] (ab) to node[right] {$\tau_{\shortminus m}^! v_{\shortminus S'\shortminus m} $} (bb);
		\draw[->] (ac) to node[left] {$\pi^! \pi_! v_{\shortminus S'\shortminus m} $} (bc);
		\draw[->] (ad) to node[right] {$\pi^! \pi_! v_{\shortminus S} $} (bc);
	\end{tikzpicture}
\end{equation}
Here the lower left horizontal map comes from $\tau_{\shortminus m}^! \pt^! = \pt^!$, the lower right and top middle maps come from the natural transformation $\tau_{\shortminus m}^! \cong \tau_{m!} \to \pi^! \pi_!$ of Proposition \ref{prop:endcomp}, and the top right is the image of the counit map $\omega_{\shortminus S' \shortminus m} \to \omega_{\shortminus S}$ under $\pi^! \pi_!$. 

Unwinding the definitions of $\phi^m$ and $z^m$, the outer square of (\ref{eq:bigsquare}) now gives rise by adjunction to the following diagram. 
\begin{equation*}
	\begin{tikzpicture}
		[baseline=(current  bounding  box.center),thick,>=\arrtip]
		\node (a) at (0,0) {$\pi_! \omega_{\shortminus S'}$};
		\node (b) at (4.5,0) {$\pi_! \omega_{\shortminus S}$};
		\node (c) at (0,-1.5) {$\pi_! \omega_{\bR^n}$};
		\node (d) at (4.5,-1.5) {$\pi_! \omega_{\bR^n}$.};
		\draw[->] (a) to node[above] {$\shortminus \bD \phi^m $} (b);
		\draw[->] (b) to node[right] {$\pi_! v_{\shortminus S} $} (d);
		\draw[->] (a) to node[left] {$\pi_! v_{\shortminus S'} $}(c);
		\draw[->] (c) to node[above] {$z^m $} (d);
	\end{tikzpicture}
\end{equation*}
From this we obtain a diagram of the form (\ref{eq: wrapping-of-contractbles}) from this by applying Lemma \ref{lem:duality}. 
\end{proof}

\begin{proof}[Proof of Theorem \ref{thm:mirrorcomplex}]
	The isomorphism (\ref{eq:mainiso}) follows immediately from Lemmas \ref{lem: wrapping-of-contractbles1} and~\ref{lem: wrapping-of-contractbles2}. It exists for $C^\bullet = C^\bullet(F^\bullet)$ by Proposition \ref{prop:SiofF}. 
	
	The condition that $F^\bullet$ is a complex (i.e. its differential squares to zero) is equivalent to the condition that for any $i, k \in I$ with $\deg(i) = \deg(k) + 2$, we have
	\begin{equation}\label{eq:diff1} \sum_{j \in I} \sum_{m \in \Exp_{ij}} \sum_{m' \in \Exp_{jk}} c_{ijm}c_{jkm'} z^{m+m'} = 0. \end{equation}
	Similarly, $C^\bullet$ being a complex is equivalent to the condition that for any such $i, k \in I$ we have
	\begin{equation}\label{eq:diff2}  \sum_{j \in I} \sum_{m \in \Exp_{ij}} \sum_{m' \in \Exp_{jk}} c_{ijm}c_{jkm'} \phi_{ij}^m \phi_{jk}^{m'} = 0. \end{equation}
	But note that  $\phi_{ij}^m \phi_{jk}^{m'} = \phi_{ik}^{m + m'}$, where $\phi_{ik}^{m + m'}$ is the composition of the canonical maps $\pi_* \bC_{S_k} \to \pi_* \bC_{S_i + m + m'}$ and $\pi_* \bC_{S_i + m + m'} \congto \pi_* \bC_{S_i}$. 
	It follows that (\ref{eq:diff1}) and (\ref{eq:diff2}) are both equivalent to the condition that for any $m'' \in \bZ^n$, we have
	$$ \sum_{j \in I} \sum_{m \in \Exp_{ij}} \sum_{m' \in \Exp_{jk}} c_{ijm}c_{jkm'}[m + m' = m''] = 0, $$
	where $[m + m' = m'']$ is zero or one depending on whether the equality holds. In particular, $C^\bullet$ is a complex if $F^\bullet$ is. 
\end{proof}

\begin{Example}
Here is a special case of Theorem \ref{thm:mirrorcomplex} from the existing literature. Let $\cF$ be a coherent sheaf on a smooth projective toric variety $X_\Sigma$, and let $\cE^\bullet$ be a locally free resolution of $\cF$ whose terms are sums of anti-effective line bundles. By results of \cite{HHL23,BE24} such a resolution exists, for example, when $\cF$ is the structure sheaf of a toric subvariety. The nonequivariant coherent-constructible correspondence identifies $\cE^\bullet$ as being mirror to a complex $C^\bullet_{\Delta}$ of the kind appearing in Theorem \ref{thm:mirrorcomplex}, where the $S_i$ are certain polytopes (full-dimensional, if the summands of $\cE^\bullet$ are anti-ample) \cite[Prop. 3.12]{Zho19}. Here mirror refers to the equivalence between $\Coh(X_\Sigma)$ and sheaves whose singular support belongs to a certain Legendrian determined by the fan $\Sigma$. It follows from e.g. \cite[Cor. 9.5]{Kuw20} that the restriction of $\cF$ to $(\bC^\times)^n$ is mirror to $W C^\bullet_{\Delta}$, as Theorem \ref{thm:mirrorcomplex} also asserts in this case. 
\end{Example}

\section{Realizability}\label{sec:branes}

In this section we consider the question of realizing tropical Lagrangian coamoebae as degenerations of coamoebae of smooth Lagrangian branes in $T^* T^n$. As sketched in the introduction, we anticipate that in good cases there exists an isotopy $\{L_t\}_{t > 0}$ of (possibly immersed) Lagrangian branes whose sheaf quantizations converge to $C^\bullet(F^\bullet)$ and whose coamoebae converge to $T(F^\bullet)$ as $t \to 0$. Our main result is that this is the case when $F^\bullet$ is a two-term complex (Theorem \ref{thm:hypersmoothing}). 

In the two-term case $T(F^\bullet)$ is a bipartite graph, and the Lagrangians branes we construct are based on a number of antecedents in the literature \cite{STWZ19,Mat21,Hic19} (see Remark~\ref{rem:otherLags} for more discussion). When $n=2$, their sheaf quantizations were termed alternating sheaves and studied in \cite{STWZ19}. Here we extend this terminology and a number of results of \cite{STWZ19} to higher dimensions. We then consider degenerations of alternating sheaves in which their support retracts onto a bipartite graph. The limit of such a degeneration is what we call a ``reflected local system'' (Proposition \ref{prop:limitaltsheafquant}), a fact we then use to prove Theorem~\ref{thm:hypersmoothing}. 

We begin with some definitions. Throughout we fix an $n$-manifold $M$ and let $\pi$ denote the projections from its cotangent and cosphere bundles.  Below $x_0, \dotsc, x_i$ and $\xi_0, \dotsc, \xi_n$ are base and fiber coordinates on $T^* \bR^n$, we write $S^{n-1} \subset \bR^{n-1}$ for the unit sphere, and we embed $S^* \bR^n$ into $T^* \bR^n$ as unit covectors. 

\begin{Definition}\label{def:stdev}
The standard Legendrian eversion $\Lambda_{std}\subset S^* \bR^n$ is the image of the Legendrian embedding $\psi: \bR \times S^{n-2}  \to S^* \bR^n$ given by
\begin{gather*}
	\psi^*(x_i) = \begin{cases}
		x_0 & i = 0 \\
		x_0 x_i & i > 0 ,
	\end{cases}
	\quad
	\psi^*(\xi_i) = \begin{cases}
		1/\sqrt{2} & i = 0 \\
		\shortminus x_i /\sqrt{2} & i > 0.
	\end{cases}
\end{gather*}
The front projection $\pi(\Lambda)$ of a Legendrian $\Lambda \subset S^* M$ has an eversion at $x \in M$ if there is a diffeomorphism between neighborhoods of $x \in M$ and $0 \in \bR^n$ which identifies $\Lambda$ with $\Lambda_{std}$.  
\end{Definition}

In words, the standard eversion is one of the two embedded Legendrians whose front projection is the cone $x_0^2 = \sum_{i > 0} x_i^2$. It is distinguished from the other by lifting the smooth part of this cone to conormals pointing towards the $x_0$-axis when $x_0 > 0$ and away from the $x_0$-axis when $x_0 < 0$. 

When $n=2$ we would simply call an eversion a crossing, as it is a point where the immersed curve $\pi(\Lambda)$ crosses itself. But when $n>2$ the term crossing is less apt, while the term eversion captures the fact that $\Lambda_{std}$ changes from lifting a sphere in $\bR^n_{x_0 = t}$ along inward conormals to outward conormals as $t$ changes sign. 

We say a component $U$ of $M \smallsetminus \pi(\Lambda)$ is globally inward (resp. outward) if $\Lambda$ lifts the smooth part of its boundary $\partial \overline{U}$ to inward (resp. outward) conormals. If an eversion $x$ of $\pi(\Lambda)$ is in the closure of $U$, we say that $U$ is locally inward (resp. outward) near $x$ if $\Lambda$ lifts the smooth part of $\partial \overline{U}$ to inward (resp. outward) conormals in a neighborhood of $x$. We note that condition (3) in the following definition is only for convenience --- subsequent results remain true without it, if we modify their statements appropriately. 

\begin{Definition}\label{def:alternatingLeg}
	A Legendrian $\Lambda \subset S^* M$ is alternating if
	\begin{enumerate}
		\item the restriction $\pi: \Lambda \to M$ is an embedding except above the eversions of $\pi(\Lambda)$,
		\item if a component of $M \smallsetminus \pi(\Lambda)$ is locally inward (resp. outward) near an eversion then it is globally inward (resp. outward), and
		\item each globally inward or outward component of $M \smallsetminus \pi(\Lambda)$ is contractible and has an eversion in its closure.  
	\end{enumerate}
	If $\Lambda$ is alternating, we say a component of $M \smallsetminus \pi(\Lambda)$ is black, white, or null if it is globally inward, globally outward, or neither. We write $j_w: U_w \into M$ for the union of the white components of $M \smallsetminus \pi(\Lambda)$ and $i_w: \overline{U}_w \into M$ for its closure, similarly for $j_b: U_b \into M$ and $i_b: \overline{U}_b \into M$. If $\Lambda$ is not clear from context, we write $U_w(\Lambda)$, etc. 
\end{Definition}

If $L$ is a subset of $T^* M$, define its asymptotic boundary $\partial_\infty L \subset S^* M$ as follows. Form the fiberwise spherical compactification of $T^* M$, take the closure of $L$, and then intersect it with the fiberwise boundary. Given $\cF \in \Sh(M)$, we write $ss^\infty(\cF) \subset S^* M$ for the asymptotic boundary of its singular support $ss(\cF) \subset T^*M$. 

\begin{Definition}
	A sheaf $\cF \in \Sh(M)$ is alternating if $\Lambda := ss^\infty(\cF)$ is an alternating Legendrian and $\cF_x \cong 0$ whenever $x$ belongs to a null component of $M \smallsetminus \pi(\Lambda)$. We say $\cF$ has rank $n$ if $\cF_x$ has $n$-dimensional cohomology for all $x \in U_w \cup U_b$. Given an alternating Legendrian $\Lambda$, we write $\Sh^{alt}_{\Lambda}(M) \subset \Sh_\Lambda(M)$ for the subcategory of alternating sheaves $\cF$ with $ss^\infty(\cF) \subset \Lambda$.
\end{Definition}

\begin{Definition}
A Lagrangian $L \subset T^* M$ is alternating if (1) it is exact, embedded, and eventually conical, (2) $\Lambda:= \partial_\infty L$ is an alternating Legendrian, and (3) $\pi: L \to M$ is one-to-one above $U_w$ and $U_b$ and zero-to-one outside of $\ol{U}_w \cup \ol{U}_b$. 
\end{Definition}

When $n=2$, the above definitions specialize to corresponding definitions in \cite{STWZ19} with minor variations. Most visibly, what we call an alternating Lagrangian here was called a conjugate Lagrangian there. Similarly, the next few results are higher-dimensional extensions of corresponding results in \cite{STWZ19}. The main differences in the general case are that the front projection $\Lambda \to M$ of an alternating Legendrian is no longer an immersion when $n > 2$, and that the local computations of \cite{STZ17} can no longer be directly applied. 

\begin{Proposition}[{cf. \cite[Prop. 4.9]{STWZ19}}]\label{prop:altfilling}
Every alternating Legendrian admits a filling by an alternating Lagrangian. 
\end{Proposition}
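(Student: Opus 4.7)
The plan is to build $L$ as the closure in $T^*M$ of Lagrangian sections $\mathrm{graph}(df_U)$ over each black and white component $U$ of $M \smallsetminus \pi(\Lambda)$, where each primitive $f_U$ is chosen to blow up along $\partial U$ in the direction of the Legendrian covector of $\Lambda$. The key input is that each such $U$ is contractible by condition~(3) of Definition~\ref{def:alternatingLeg}.

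First I analyze the local model at an eversion. In the coordinates of Definition~\ref{def:stdev}, the front $\pi(\Lambda_{std})$ divides a neighborhood of $0 \in \bR^n$ into an upper cone $\{x_0 > |x_{>0}|\}$, a lower cone $\{-x_0 > |x_{>0}|\}$, and an exterior. A direct calculation from the parametrization $\psi$ shows that $\Lambda_{std}$ lifts the boundaries of the upper and lower cones to inward and outward conormals respectively, so these components are locally black and white while the exterior is null. With $\rho = x_0^2 - |x_{>0}|^2$ (positive on both cones), one checks that $d\rho$ along the upper cone boundary and $-d\rho$ along the lower cone boundary are positive multiples of the Legendrian covector of $\Lambda_{std}$. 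Setting $f_b = -1/\rho$ on the upper cone and $f_w = 1/\rho$ on the lower cone then gives $df_b = d\rho/\rho^2$ and $df_w = -d\rho/\rho^2$, both blowing up along the cone boundary in the Legendrian direction, and the closure of $\mathrm{graph}(df_b) \cup \mathrm{graph}(df_w)$, smoothly interpolated to the conormal cone $\overline{N^*(\text{cone})}$ at large fiber scale, is an exact, embedded, eventually conical Lagrangian locally filling $\Lambda_{std}$.

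To globalize, for each black or white component $U$ I produce a smooth positive function $\rho_U: U \to \bR_{>0}$ which (i) vanishes to first order along the smooth part of $\partial U$ with $d\rho_U$ pointing in the appropriate Legendrian conormal direction, and (ii) agrees with the local function $\rho$ above in normalized coordinates near each eversion in $\partial \overline{U}$. The existence of such $\rho_U$ is a standard partition-of-unity interpolation, enabled by the contractibility of $U$ from Definition~\ref{def:alternatingLeg}(3): local defining functions along each smooth face of $\partial U$ can be smoothly patched, and the explicit eversion model pins down the behavior near each eversion. I then set $f_U = \mp 1/\rho_U$ according as $U$ is black or white, take $L_U$ to be the graph of $df_U$ interpolated to the conormal cone $\overline{N^*U}$ at sufficiently large fiber scale, and define $L = \bigcup_U L_U$. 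The defining properties of an alternating Lagrangian then follow: exactness from the primitives $f_U$; embeddedness over each $U$ from injectivity of the section and near eversions from the local model; eventual conicality and $\partial_\infty L = \Lambda$ from the prescribed blow-up matching the Legendrian rays and from the interpolation to the conormal cone; and the projection property from the support.

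The main obstacle is the simultaneous matching of $\rho_U$ with the eversion model at each eversion in $\partial \overline{U}$ while maintaining the correct Legendrian direction along every smooth stratum of $\partial U$, and the verification that the interpolation to the conormal cone near infinity preserves embeddedness. Both points are exactly where Definition~\ref{def:alternatingLeg}(3) enters: contractibility of $U$ together with the presence of an eversion in its closure makes the local models coherently patchable, and separates the supports of distinct components $L_U$ on the interior side, so that the only meeting of different sheets occurs in the prescribed asymptotic Legendrian, which is embedded by assumption.
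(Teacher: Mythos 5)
Your globalization strategy (take graphs of exact one-forms over the black and white components, patch via partition of unity, then flatten near fiber infinity) is the same as the paper's, but your local model at an eversion is wrong, and that defect is not repairable by the interpolation you invoke.

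Concretely, the paper's Lemma~\ref{lem:localmodel} uses the primitive $g = \sgn(x_0)\sqrt{\rho}$ with $\rho = x_0^2 - |x_{>0}|^2$, while you propose $f_{b/w} = \mp 1/\rho$. The exponent on $\rho$ is not a free parameter: along the ray $x_{>0} = cx_0$ with $|c|<1$, one computes $d(\sqrt{\rho}) = (dx_0 - c\cdot dx_{>0})/\sqrt{1-|c|^2}$, which is \emph{bounded} and has a finite nonzero limit as $x_0 \to 0$. This is why the closure of the graph of $dg$ is a smooth embedded Lagrangian that caps off through the fiber over the origin along the hyperboloid $\xi_0^2 - \sum_{i>0}\xi_i^2 = 1$ (the image of $\phi$ at $x_0=0$), and why $\partial_\infty$ of this local Lagrangian is exactly $\Lambda_{std}$. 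With your $f = \mp 1/\rho$, the same computation gives $|df| \sim |x_0|^{-3} \to \infty$ along every such ray, with normalized covector direction $(1,-c)/|(1,-c)|$. Taking the closure in the fiberwise spherical compactification, you therefore pick up the whole \emph{disk} $\{(1,-c)/|(1,-c)| : |c|\le 1\}$ of asymptotic directions over the origin, not just the sphere $\{|c|=1\}$ that comprises $\Lambda_{std}\cap S^*_0\bR^n$. So $\partial_\infty L \supsetneq \Lambda_{std}$ and your local $L$ is not a filling of $\Lambda_{std}$.

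The ``interpolate to the conormal cone at large fiber scale'' step cannot rescue this. A modification near fiber infinity that kills the extra disk of asymptotic directions would have to take effect over the \emph{interiors} of the cones near the origin (that's where your sheets are escaping to infinity), and there the Lagrangian must remain a graph in order to satisfy the condition that $\pi:L\to M$ is one-to-one over $U_w$ and $U_b$. In the paper's model there is no such conflict because $dg$ is already bounded on the interior near the eversion; the Hamiltonian flow needed to make things eventually conical acts only near fiber infinity over a neighborhood of the smooth boundary strata, away from the origin fiber. In short, the exponent $1/2$ in $g=\sgn(x_0)\rho^{1/2}$ is the unique power for which the graph closes up through the eversion fiber, and Lemma~\ref{lem:localmodel} is precisely the nontrivial input your argument is missing. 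Replacing your $\mp 1/\rho$ by $\pm\sqrt{\rho}$ (and $\pm y_0^{1/2}$ near the smooth boundary strata) would bring your proof in line with the paper's.
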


The proof uses the following local model, where we again write $x_0, \dotsc, x_i$ and $\xi_0, \dotsc, \xi_n$ for base and fiber coordinates on $T^* \bR^n$, and where we write $D^{n-1} \subset \bR^{n-1}$ for the open unit disk. 

\begin{Lemma}\label{lem:localmodel}
Let $f = x_0 (1 - \sum_{i > 0}x_i^2)^{1/2}$ and $g = \sgn(x_0) (x_0^2 - \sum_{i > 0} x_i^2)^{1/2}$. Then the map $\phi: \bR \times D^{n-1} \to T^* \bR^n$ given by
	\begin{gather*}
		\phi^*(x_i) = \begin{cases}
			x_0 & i = 0 \\
			x_0 x_i & i > 0 ,
		\end{cases}
		\quad
		\phi^*(\xi_i) = \begin{cases}
			\frac{x_0}{f} & i = 0 \\
			\shortminus \frac{x_0 x_i}{f} & i > 0
		\end{cases}
	\end{gather*}
	is an exact Lagrangian embedding such that $f$ is a primitive of $\phi^*(\lambda)$. Its image is the closure of the graph of $dg$ above the locus where $x_0^2 - \sum_{i > 0} x_i^2 < 0$. 
\end{Lemma}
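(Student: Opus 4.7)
The plan is to verify each claim directly from the explicit formulas, treating $(x_0,x_1,\dotsc,x_{n-1})$ as source coordinates on $\bR \times D^{n-1}$ and writing $|x|^2 = \sum_{i>0} x_i^2$. First I would rewrite the apparently singular expressions $\phi^*(\xi_0) = x_0/f$ and $\phi^*(\xi_i) = -x_0 x_i/f$ using $f = x_0\sqrt{1-|x|^2}$: the factor $x_0$ cancels, yielding $\phi^*(\xi_0) = (1-|x|^2)^{-1/2}$ and $\phi^*(\xi_i) = -x_i(1-|x|^2)^{-1/2}$, which are smooth on all of $\bR \times D^{n-1}$ (in particular across $\{x_0 = 0\}$). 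Embeddedness then follows from the explicit smooth inverse on the image,
\[
(X_0,\dotsc,X_{n-1};\,\Xi_0,\dotsc,\Xi_{n-1}) \longmapsto (X_0,\,-\Xi_1/\Xi_0,\,\dotsc,\,-\Xi_{n-1}/\Xi_0),
\]
which is well defined because $\Xi_0 = (1-|x|^2)^{-1/2} \geq 1$ throughout the image.

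Next I would check exactness by substituting the simplified formulas into $\phi^*\lambda = \phi^*(\xi_0)\,d\phi^*(x_0) + \sum_{i>0} \phi^*(\xi_i)\,d\phi^*(x_i)$, using $d(x_0 x_i) = x_i\,dx_0 + x_0\,dx_i$ and collecting. The result simplifies to $\sqrt{1-|x|^2}\,dx_0 - x_0(1-|x|^2)^{-1/2}\sum_{i>0} x_i\,dx_i$, which is precisely $d\bigl(x_0\sqrt{1-|x|^2}\bigr) = df$; the Lagrangian property is then automatic from $\omega = d\lambda$.

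Finally I would identify the image with the closure of the graph of $dg$ over the open cone $\{X_0^2 > \sum_{i>0} X_i^2\}$ on which $g$ is defined. On the open region $x_0 \neq 0$, inverting the base relations via $x_i = X_i/X_0$ and substituting gives
\[
\Xi_0 = \frac{|X_0|}{\sqrt{X_0^2 - \sum_{i>0} X_i^2}}, \qquad \Xi_i = -\frac{\sgn(X_0)\, X_i}{\sqrt{X_0^2 - \sum_{i>0} X_i^2}},
\]
which match the components of $dg$ on this cone. The collapsing slice $\{x_0 = 0\} \times D^{n-1}$ maps to the origin in the base and sweeps out the open disk $\{\Xi_0 > 0,\ \Xi_0^2 - \sum_{i>0}\Xi_i^2 = 1\}$ in $T^*_0\bR^n$; a direct reparametrization shows this disk is exactly the set of limits of $dg(X)$ as $X \to 0$ along rays in the cone. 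Approaches to nonzero boundary points of the cone send $|\Xi| \to \infty$ and so add nothing in $T^*\bR^n$, completing the identification.

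The main subtle point will be tracking the $\sgn(X_0)$ prefactor: the image is the union of two half-images over $\{\pm X_0 > 0\}$ glued along the fiber disk over the origin, and checking that $dg$ produces its $\sgn(X_0)$ factor precisely so as to match both half-images consistently (and that the limiting fiber over the origin is parametrized bijectively by the $x_0 = 0$ slice) is the one place I would slow down.
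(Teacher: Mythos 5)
Your argument is correct and follows essentially the same path as the paper's (direct computation of $\phi^*\lambda$, then identification of the image with a graph), but you fill in considerably more detail and use a slightly different mechanism for the graph identification. Where you compute the components of $dg$ in base coordinates and match them one by one against the $\Xi_i$, the paper instead observes in a single substitution that $\phi^*(g)=f$ when $x_0\neq 0$: combined with $\phi^*\lambda=df$, this gives $\sum \phi^*(\partial g/\partial x_i)\,d\phi^*(x_i)=\phi^*(dg)=df=\phi^*\lambda=\sum\phi^*(\xi_i)\,d\phi^*(x_i)$, forcing $\phi^*(\xi_i)=\phi^*(\partial g/\partial x_i)$ and thus that the image lies on the graph of $dg$ --- a trick that avoids ever differentiating $g$. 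Your route is more explicit and arguably clearer about where the $\sgn(x_0)$ prefactor goes, while the paper's is shorter; both are sound. You also spell out embeddedness and the identification of the fiber disk over the origin, which the paper compresses into the observation that $1/f\to\infty$ at $\partial D^{n-1}$ and the phrase ``the last claim follows.'' One more thing worth noting: the lemma as printed says the graph sits over the locus $x_0^2-\sum_{i>0}x_i^2<0$, which must be a sign typo (there $g$ is not real-valued); you silently corrected it to the cone $\{X_0^2>\sum_{i>0}X_i^2\}$, which is what the computation produces.
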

\begin{proof}
	Direct computation shows that $\phi^*(\lambda) = df$ and that $\phi^*(g) = f$ when $x_0 \neq 0$. The image of $\phi$ is closed since $1/f$ goes to infinity along the boundary of $D^{n-1}$, and the last claim follows. 
\end{proof}

\begin{proof}[Proof of Proposition \ref{prop:altfilling}]
Let $\Lambda \subset S^* M$ be an alternating Legendrian and $\{x_i\}$ its eversions. By hypothesis each $x_i$ has a neighborhood $U_i$ which is diffeomorphic to a neighborhood of $0 \in \bR^n$ in a way that identifies $\Lambda$ with the standard Legendrian eversion. 

Let $L \subset T^* M$ be the graph of $dh$, where $h$ is a smooth function on $U_{w} \cup U_{b}$ with the following properties. First, the restriction of $h$ to each $U_i$ is identified with the function $g$ of Lemma \ref{lem:localmodel} on the corresponding neighborhood of $0 \in \bR^n$. 
Second, near any point on the smooth locus of the boundary of $U_{w}$ (resp. $U_{b}$) we can choose local coordinates $y_0, \dotsc, y_{n-1}$ such that $U_{w}$ (resp. $U_{b}$) is given by $y_0 > 0$, and such that $h = y_0^{1/2}$ (resp. $h = \shortminus y_0^{1/2}$). Note that in the local model the function $g$ satisfies this condition. 

By Lemma \ref{lem:localmodel}, $L$ is an embedded, exact Lagrangian asymptotic to $\Lambda$. The claim now follows by choosing a Hamiltonian flow near the fiberwise boundary that makes $L$ eventually conical, while preserving its asymptotics and the closure of its projection to $M$. 
\end{proof}

\begin{Remark}\label{rem:otherLags}
	Aside from \cite{STWZ19}, another antecedent case of the construction of Proposition \ref{prop:altfilling} is the Lagrangian pair of pants of \cite[Def. 3.7]{Mat21}. In the setting of Theorem~\ref{thm:hypersmoothing} below, it corresponds to $F^\bullet = R_n \xrightarrow{f} R_n$ for $f$ a polynomial of degree one. The two $n$-simplices appearing in \cite{Mat21} are what we call the white and black components of $T^n \smallsetminus \pi(\Lambda)$ for some alternating Legendrian $\Lambda$ (strictly speaking, for this we should relax the smoothness hypotheses included in our definitions). In particular, the local model of Lemma \ref{lem:localmodel} is a variant of the local model used in \cite[Sec. 3.1]{Mat21}.  
	
	The dimer Lagrangians introduced in \cite[Sec. 6.1]{Hic19} generalize the pair of pants, and provide another instance of Proposition \ref{prop:altfilling} already in the literature. They correspond to the case where the white and black components of $T^n \smallsetminus \pi(\Lambda)$ are polyhedra meeting at their vertices and satisfying certain combinatorial conditions. For example, suppose the Newton polytope of $f \in R_n$ has no interior vertices, and express it as the intersection $\cap_{\xi} \bR^n_{\xi \geq 0}$ of the nonnegative loci of a finite collection of linear functionals. In the setting of Theorem~\ref{thm:hypersmoothing} we may arrange for one of the relevant alternating Legendrians to have front projection contained in the image of $\cup_\xi \bR^n_{\xi = \ep}$ under $\pi: \bR^n \to T^n$ for some $\epsilon$ (again up to smoothing). The filling of Proposition \ref{prop:altfilling} is then a dimer Lagrangian, and has coamoeba equal to the closure of a union of two components of $T^n \smallsetminus \pi(\cup_\xi \bR^n_{\xi = \ep})$. 
	
	In general, however, the alternating Lagrangians appearing in Theorem \ref{thm:hypersmoothing} cannot be isotoped to have polyhedral coamoebae. For example, this does not seem to be possible for the hypersurface resolution $R_n \xrightarrow{f} R_n$ once the Newton polytope of $f$ has interior vertices. 
\end{Remark}

\begin{Proposition}[{cf. \cite[Prop. 4.14]{STWZ19}}]\label{prop:explicitaltsheaf} Let $\Lambda \subset S^* M$ be a connected alternating Legendrian and $\cF \in \Sh_\Lambda(M)$. Then $\cF$ is an alternating sheaf of rank one if and only if we may shift its degree so that there exists a triangle
	\begin{equation}\label{eq:alttriangle} j_{w!}\omega_{U_w}[\shortminus 1] \to \cF \to i_{b*}\bC_{\ol{U}_b}.\end{equation} 
	Conversely, an extension $\cF \in \Sh(M)$ of this form is an alternating sheaf if and only if it belongs to $\Sh_\Lambda(M)$. 
\end{Proposition}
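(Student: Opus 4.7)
The proposition has a forward and a converse direction; the second sentence is essentially the reverse half of the first with the $\Sh_\Lambda(M)$ hypothesis made explicit. The plan is to reduce the reverse half to a stalk calculation and to treat the forward half via a microlocal rigidity argument.

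I would first verify that the ``outer terms'' $j_{w!}\omega_{U_w}[\shortminus 1]$ and $i_{b*}\bC_{\ol{U}_b}$ both belong to $\Sh_\Lambda(M)$. At a smooth face of $\pi(\Lambda)$, the alternating condition places the $\Lambda$-conormal outward from $U_w$ (equivalently inward to $\ol{U}_b$), which is exactly the singular support direction of an extension-by-zero from an open set and of a $*$-pushforward from a closed set, respectively. At an eversion, the local model of Definition~\ref{def:stdev} realizes $\ol{U}_b$ as a closed half-cone; a direct computation shows that the inward conormals along the smooth part of its boundary limit, at the apex, to precisely the $(n\shortminus 2)$-sphere of covectors forming $\Lambda$ over that point, and symmetrically for $j_{w!}\omega_{U_w}[\shortminus 1]$. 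Granted this, any extension $\cF$ sitting in the triangle automatically has $ss^\infty(\cF)\subset\Lambda$. Taking stalks along the triangle at points of $U_w$, $U_b$, and of a null component then gives $\cF|_{U_w}\cong\bC[n\shortminus 1]$, $\cF|_{U_b}\cong\bC$, and $\cF|_{U_{null}}=0$, so the rank and null-stalk conditions are automatic. This disposes of the reverse direction of the first claim and of the second claim at once.

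For the forward direction, suppose $\cF$ is alternating of rank one. Each component of $U_b$ is contractible by Definition \ref{def:alternatingLeg}(3), and $\cF$ is locally constant of rank one there, so on each component $\cF|_{U_b}\cong\bC$ in some degree. The connectedness of $\Lambda$, combined with the microlocal constraint at each smooth face of $\pi(\Lambda)$ (where a single covector links the stalks of $\cF$ on either side), propagates a common degree across all components; after one global shift, $\cF|_{U_b}\cong\bC_{U_b}$ and, by the same argument applied on the white side, $\cF|_{U_w}\cong\bC_{U_w}[n\shortminus 1]\cong\omega_{U_w}[\shortminus 1]$. The core step is to promote the first identification to $i_b^*\cF\cong\bC_{\ol{U}_b}$: because $\Lambda$ lifts $\partial U_b$ to inward conormals to $\ol{U}_b$ on smooth strata and, by the eversion computation above, at the singular strata, the inclusion $j_b:U_b\into\ol{U}_b$ is non-characteristic for $\cF$, yielding $i_b^*\cF\cong j_{b*}(\cF|_{U_b})$. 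The right side equals $\bC_{\ol{U}_b}$ by contractibility of each component of $\ol{U}_b$ and of each small neighborhood intersected with $U_b$. The adjunction $(i_b^*,i_{b*})$ then produces a canonical map $\cF\to i_{b*}\bC_{\ol{U}_b}$; its fiber vanishes on $\ol{U}_b$ and on null components by construction, restricts to $\bC_{U_w}[n\shortminus 1]$ on $U_w$, and vanishes on $\partial U_w\setminus\partial U_b$ by the dual microlocal argument (outward conormals from $U_w$ into null regions). Hence the fiber is $j_{w!}\omega_{U_w}[\shortminus 1]$, producing the triangle.

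The main obstacle is the microlocal extension at the eversions. In dimension two this is handled by the front-projection calculus of \cite{STZ17} as used in \cite{STWZ19}, but in higher dimensions the front of $\Lambda$ is not an immersion, so the argument has to be carried out directly on the local model of Definition~\ref{def:stdev}. Concretely, one must verify that the sphere of conormals over the apex leaves no room for ``hidden'' microlocal monodromy of $\cF$, so that the constant extension of $\cF|_{U_b}$ across $\partial U_b$ is forced; the parallel obstacle on the white side, required to identify the fiber with $j_{w!}\omega_{U_w}[\shortminus 1]$, is handled by the same methods.
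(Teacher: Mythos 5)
Your overall decomposition (show the outer terms determine $\cF$ up to the extension class, then pin down the extension at the eversions) is structurally similar to the paper's, but the key microlocal step at the eversions is wrong, and the error propagates through both directions.

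The problem is the claim that the outer terms $j_{w!}\omega_{U_w}[\shortminus 1]$ and $i_{b*}\bC_{\ol{U}_b}$ individually lie in $\Sh_\Lambda(M)$. Over an eversion $x$, the singular support of $i_{b*}\bC_{\ol{U}_b}$ at $x$ is \emph{not} the limit of the conormals along the smooth strata. In the local model of Definition~\ref{def:stdev}, $\ol{U}_b$ is locally the closed solid cone $\ol{C}_+ = \{x_0 \geq (\sum_{i>0} x_i^2)^{1/2}\}$; by the standard computation for closed convex cones (\cite[Ex. 5.3.1]{KS94}), $ss(\bC_{\ol{C}_+})$ at the apex is the full polar cone $\{\xi_0 \geq (\sum_{i>0} \xi_i^2)^{1/2}\}$, whose asymptotic boundary in $S^*_xM$ is a closed $(n\shortminus 1)$-disk. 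The Legendrian $\Lambda_{std}$ over $x$ is only the $(n\shortminus 2)$-sphere bounding that disk, so $ss^\infty(i_{b*}\bC_{\ol{U}_b}) \not\subset \Lambda$; the same applies (by duality/antipode) to $j_{w!}\omega_{U_w}$. Consequently your assertion ``any extension $\cF$ sitting in the triangle automatically has $ss^\infty(\cF)\subset\Lambda$'' is false: the split extension $j_{w!}\omega_{U_w}[\shortminus 1]\oplus i_{b*}\bC_{\ol{U}_b}$ retains the full disk of singular directions at each eversion. This also explains why the second sentence of the proposition is stated as an ``if and only if'': whether the extension lies in $\Sh_\Lambda(M)$ is precisely the non-automatic condition distinguishing the nontrivial extension. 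Your argument would render that clause vacuous, which should itself have been a warning sign.

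The same mistaken local picture undermines your forward direction: you invoke ``the eversion computation above'' to conclude that $j_b$ is non-characteristic for $\cF$ at the singular strata, but that step needs a genuine argument at the apex. The paper instead computes the microstalk of $\cF$ at the fiber covector $\xi = (0, dx_0)$ in the local model, which lies \emph{inside} the disk but off $\Lambda$; vanishing of that microstalk forces the generization map from $\cF_x$ to a nearby black stalk to be an isomorphism, which is the content you need. The paper then pins down the shifts by the one-line computation $\cHom(j_{\be*}\bC_{U_{\be}}, j_{\al!}\omega_{U_{\al}}) \cong \bC_x$ at each eversion (equation~\eqref{eq:sheafHomalt}), concluding that the extension class is nonzero there and that the degree offset between adjacent white and black components is exactly $1$; connectedness of $\Lambda$ then forces a single global shift. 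You would need to replace your eversion step with a calculation of this kind (or with the microstalk computation) before either direction of your argument goes through.
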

\begin{proof}
	The if direction of the first claim is trivial, since $ss^\infty(\cF) \subset \Lambda$ by assumption. For the only if direction, write $j: M \smallsetminus \overline{U}_b \into M$ for the complement of $\ol{U}_b$. We claim the triangle 
	$$  j_{!}j^! \cF \to \cF \to i_{b*} i^*_b \cF $$
	is of the form (\ref{eq:alttriangle}). 
	
	Writing $i: (M \smallsetminus \overline{U}_b) \smallsetminus U_w \into M \smallsetminus \overline{U}_b$ for the inclusion, the cone over the natural map $j_{w!}j^!_w \cF \to j_{!}j^! \cF$ is $j_! i_* i^* j^! \cF$. If $x \in (M \smallsetminus \overline{U}_b) \smallsetminus U_w$, either $x$ is in a null component of $M \smallsetminus \pi(\Lambda)$, or $x$ is on a smooth point of $\pi(\Lambda)$ that $\Lambda$ lifts to a covector pointing into a null component. Either way  $\cF$ being alternating implies $\cF_x \cong 0$, hence $j_! i_* i^* j^! \cF \cong 0$ and $j_{w!}j^!_w \cF \cong j_{!}j^! \cF$. Writing $U_w = \sqcup_\al U_{\al}$ for the decomposition into components, it follows from the contractibility of the $U_{\al}$ that $j^!_w \cF \cong \oplus_\al \omega_{U_{\al}}[k_\al]$ for some $k_\al \in \bZ$. 
	
	Next we claim the map $i_{b*} i^*_b \cF \to j_{b*} j^*_b \cF$ is an isomorphism. Given $x \in M$, it is clear from~$\cF$ being alternating that $(i_{b*} i^*_b \cF)_x \to (j_{b*} j^*_b \cF)_x$ is an isomorphism if  $\pi(\Lambda)$ does not have an eversion at $x$. If it does, let $\xi \in T^*_x M$ be the covector pulled back from $(0,dx_0)$ in a local identification with $\Lambda_{std}$. The microstalk of $\cF$ at $\xi$ is the cone over the generization map from $\cF_x$ to the stalk of $\cF$ at a nearby point in the interior of the adjacent black region. Since $\xi \notin \Lambda$, this cone is zero and the generization map is an isomorphism. But this generization map is also an isomorphism for $j_{b*} j^*_b \cF$, since each component of $\overline{U}_b$ is homeomorphic to a closed ball. Thus $i_{b*} i^*_b \cF \to j_{b*} j^*_b \cF$ induces an isomorphism of stalks at $x$ since it does so for nearby stalks, and $i_{b*} i^*_b \cF \cong j_{b*} j^*_b \cF$.  Writing $U_b = \sqcup_\be U_{\be}$ for the decomposition into components, it again follows from contractibility that $j^*_b \cF \cong \oplus_\be \bC_{U_{\be}}[k_\be]$ for some $k_\be \in \bZ$. 
	
	It remains to show that for some $k \in \bZ$ we have $k_\al = k \shortminus 1$ for all $\al$ and $k_\be = k$ for all~$\be$. Fix an eversion $x$ of $\pi(\Lambda)$, and let $j_{\al}: U_{\al} \into M$ and $j_{\be}:U_{\be} \into M$ be the unique white and black components containing $x$ in their closures. Note that
	\begin{align}\label{eq:sheafHomalt}
		\cHom(j_{\be*}\bC_{U_{\be}}, j_{\al!}\omega_{U_{\al}}) \cong \bD(j_{\be*}\bC_{U_{\be}} \otimes \bD j_{\al!}\omega_{U_{\al}}) 
		\cong \bD(j_{\be*}\bC_{U_{\be}} \otimes j_{\al*}\bC_{U_{\al}}) 
		\cong \bC_{x}.
	\end{align}
	It follows that there is a nonzero extension of $j_{\be*}\bC_{U_{\be}}[k_\be]$ by $j_{\al!}\omega_{U_{\al}}[k_\al]$ if and only if $k_\al = k_\be -1$. But $\cF$ restricts to such an extension near $x$, since the singular support at $x$ of the trivial extension is larger than $\Lambda \cap S^*_x M$. The claim now follows since $\Lambda$ is connected, hence for any $\al$, $\be$ the components $U_\al$ and $U_\be$ are connected by a path through $\ol{U}_w \cup \ol{U}_b$. 	
\end{proof}

Next we explain how alternating sheaves and alternating Lagrangians are related by sheaf quantization. Recall that the sheaf of brane data on an exact, eventually conical Lagrangian~$L$ is the image of the Kashiwara-Schapira sheaf $\mu sh_{L'}$ under $L \cong L'$, where $L' \subset S^*(M \times \bR)$ is the Legendrian lift of a lower exact perturbation of $L$ \cite{JT17} (see Section~\ref{sec:sheafquant} for more discussion). Given $\cE \in \mu sh_{L'}(L')$, the sheaf quantization of the brane $(L,\cE)$ is the image of~$\cE$ under the composition 
$$ \mu sh_{L'}(L') \xrightarrow{\sim} \Sh^0_{L'}(M \times \bR) \xrightarrow{p_*} \Sh_{\Lambda}(M). $$
Here the left functor is the inverse of microlocalization, and $\Sh^0_{L'}(M \times \bR) \subset \Sh_{L'}(M \times \bR)$ consists of sheaves vanishing on $M \times (R, \infty)$ for $R \gg 0$.  Note that since $\mu sh_{L'}$ is locally equivalent to the sheaf of local systems on $L$, the brane datum $\cE$ has a well-defined rank. 

\begin{Proposition}[{cf. \cite[Prop. 4.18]{STWZ19}}]\label{prop:altsheafquant}
Let $L$ be an alternating Lagrangian and $\Lambda := \partial_{\infty} L$. Sheaf quantization restricts to an equivalence between the category of Lagrangian branes supported on $L$ and the category of alternating sheaves with $ss^\infty(\cF) \subset \Lambda$. The quantization of a rank-$m$ brane is a rank-$m$ alternating sheaf. 
\end{Proposition}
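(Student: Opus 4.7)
The plan is to establish three things: (i) the sheaf quantization of a rank-$m$ brane on $L$ is an alternating sheaf of rank $m$; (ii) the functor is essentially surjective onto alternating sheaves with $ss^\infty(\cF) \subset \Lambda$; (iii) it is fully faithful. Since $L$ is a smooth embedded exact Lagrangian, $\mu sh_{L'}$ is locally equivalent to $\Loc(L')$, so a rank-$m$ brane datum is locally a rank-$m$ local system and the notion of rank is unambiguous.

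First I would analyze sheaf quantization locally. Away from eversions, $\Lambda$ is smooth and the projection $p_*\colon \Sh^0_{L'}(M \times \bR) \to \Sh_\Lambda(M)$ is fully faithful by the text preceding Section~\ref{sec:sheafquant}; the standard local calculation shows that a rank-$m$ brane datum gives a sheaf whose stalks are of rank $m$ on the side of $\pi(\Lambda)$ into which $L$ projects (the ``black'' side, entering via $i_{b*}$) and $\omega$-shifted of rank $m$ on the opposite side (the ``white'' side, via $j_{w!}[\shortminus 1]$), with vanishing stalks elsewhere. For a neighborhood of an eversion I would invoke the explicit local model of Lemma~\ref{lem:localmodel}: the two components $U_+ \sqcup U_-$ onto which $L$ projects are, by Definition~\ref{def:alternatingLeg}(2), the restrictions of one global white and one global black component. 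A direct computation in this model --- either via a small Reeb perturbation of $L'$ in $S^*(M \times \bR)$ that smooths the front, followed by cycle comparison, or by computing microstalks and costalks on the two sides of the cone --- yields the local triangle $j_{w!}\omega_{U_w \cap V}[\shortminus 1] \to \cF|_V \to i_{b*}\bC_{\ol{U}_b \cap V}$ of Proposition~\ref{prop:explicitaltsheaf} on a neighborhood $V$ of the eversion.

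Globalizing via the sheaf property of $\mu sh_{L'}$ and the global consistency of the inward/outward labeling (Definition~\ref{def:alternatingLeg}(2)), these local descriptions assemble: null components receive zero stalks, while white and black components carry compatible $\omega$-shifted and constant contributions of rank $m$. This establishes (i). For essential surjectivity, given a rank-$m$ alternating sheaf $\cF$, the rank-$m$ analogue of Proposition~\ref{prop:explicitaltsheaf} (proved by the same argument, replacing constant sheaves with local systems on the contractible chambers, using condition~(3) of Definition~\ref{def:alternatingLeg}) represents $\cF$ as an extension of $i_{b*}\cE_b$ by $j_{w!}\cE_w[\shortminus 1]$. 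Since $L$ topologically retracts onto two sheets projecting homeomorphically to $\ol{U}_w$ and $\ol{U}_b$ and meeting above $\pi(\Lambda)$, the pair $(\cE_w, \cE_b)$ together with the extension class assembles into a rank-$m$ object of $\mu sh_{L'}(L') \simeq \Loc(L)$, with the gluing determined by the local computation at each eversion. Full faithfulness follows similarly: the triangle presentation reduces $\Hom(\cF,\cF')$ for two alternating sheaves to compatible pairs of local-system morphisms on $U_w$ and $U_b$ together with a compatibility class, matching the description of $\Hom$ in $\Loc(L)$ under the same decomposition.

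The main obstacle will be the explicit sheaf-theoretic computation at an eversion, since the general fully faithfulness of $p_*$ requires $\Lambda$ to be smooth, which fails precisely there. I would handle this by working in the local model of Lemma~\ref{lem:localmodel}: either (a) performing a small Reeb perturbation of $L'$ in $S^*(M \times \bR)$ that resolves the eversion in the front, computing the quantization in the resulting smooth setting, and then taking a limit controlled by continuity of sheaf quantization under contact isotopy; or (b) computing the microlocal Hom complex directly using that $\Lambda_{std}$, the local topology of $L$, and the components $U_\pm$ are all explicit. The Maslov shift producing the $[\shortminus 1]$ on the white side is dictated by the orientation data in the brane structure and is naturally compatible with the black/white labeling on $L$.
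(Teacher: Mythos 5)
Your overall plan — show the quantization lands in alternating sheaves, then establish essential surjectivity and full faithfulness — is a reasonable but genuinely different structure from the paper's, which instead proves equivalence all at once by a local-to-global (Čech) argument. There are, however, several concrete gaps.

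First, a conceptual point: you write that fully faithfulness of $p_*$ ``requires $\Lambda$ to be smooth, which fails precisely there'' (at eversions). But $\Lambda$ is a smooth embedded Legendrian — it is the \emph{front projection} $\pi(\Lambda)$ that is singular at eversions, and the fully faithfulness criterion for $p_*\colon \Sh^0_{L'}(M\times\bR)\to\Sh_\Lambda(M)$ depends on the smoothness of $\Lambda$ itself, which holds. So full faithfulness onto $\Sh_\Lambda(M)$ is already available; the genuinely hard content is essential surjectivity onto the alternating subcategory (and the rank computation), and you have somewhat misdiagnosed where the difficulty lies.

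Second, your essential surjectivity step has a circularity problem. You invoke $\mu sh_{L'}(L') \simeq \Loc(L)$ to assemble $(\cE_w,\cE_b,\text{extension class})$ into a brane. But the trivializability of the Kashiwara–Schapira sheaf on $L$ is exactly Proposition~\ref{prop:trivialKSsheaf}, which the paper derives \emph{from} Proposition~\ref{prop:altsheafquant} (by exhibiting a rank-one alternating sheaf and running the equivalence backwards). You cannot assume it here without a separate argument. Your alternative — building the $\mu sh$ object directly from the triangle data, gluing along eversions — is essentially what the paper does, but you only gesture at it (``globalizing via the sheaf property of $\mu sh_{L'}$''), and the full faithfulness part is dispatched with ``follows similarly.''

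Third, on the local model at an eversion, your suggestions (Reeb perturbation plus a limit, or direct microstalk computation) are plausible in principle but not developed. The paper's observation here is the crux and worth noting: $\Lambda_{std}\subset S^*\bR^n$ is the \emph{total Legendrian} of a Legendrian isotopy in $S^*\bR^{n-1}$ (Reeb flow of the conormal lift of a sphere), so one may directly apply the theorem of Guillermou–Kashiwara–Schapira \cite[Prop.~3.12]{GKS12} on sheaf quantization of contact isotopies: restriction to a single $x_0$-slice is an equivalence. Combined with a cover of $M$ by contractible pieces whose intersections with $\ol{U}_w\cup\ol{U}_b$ are contractible, and whose triple intersections lie in null regions, the global equivalence falls out of the Čech limit presentation of both sides. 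This packages the essential surjectivity, full faithfulness, and gluing coherence into a single clean step, and avoids entirely the need to pre-establish $\mu sh_{L'}(L')\simeq\Loc(L)$.
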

\begin{proof}
	Fix $\cE \in \mu sh_{L'}(L')$, and let $\cQ$ be the sheaf quantization of $(L,\cE)$. First we show that $\cQ$ is an alternating sheaf. Let $\cF$ be the image of $\cE$ in $\Sh^0_{L'}(M \times \bR)$. Recall that the lower exact perturbation of $L$ may be chosen small enough so that the two coincide except above a neighborhood of $\pi(\Lambda)$. In particular, if $x \in M$ belongs to a null component of $M \smallsetminus \pi(\Lambda)$, we may assume $\{x\} \times \bR$ does not meet $\pi(L') \subset M \times \bR$. It follows that $\cF$ vanishes on $\{x\} \times \bR$, hence $\cQ$ vanishes at $x$, hence $\cQ$ is an alternating sheaf. 
	
	Next we show that $\cQ$ has rank $m$ if $\cE$ does. The latter condition is equivalent to all microstalks of $\cF$ away from the zero-section having $m$-dimensional cohomology. If $x \in M$ belongs to a white or black component of $M \smallsetminus \pi(\Lambda)$, then similarly we may assume $\{x\} \times \bR$ meets $\pi(L')$ at a single point $(x,t)$. The intersection $L' \cap S^*_{(x,t)}(M \times \bR)$ is a single covector which pairs negatively with $dt$. The microstalk of $\cF$ at this covector has $m$-dimensional cohomology since $\cE$ is rank $m$. It follows that the restriction of $\cF$ to $\{x\} \times \bR$ is a sum of $m$ copies of $\bC_{\{x\} \times \bR_{\leq t}}$, shifted to possibly different degrees. But $\bC_{\{x\} \times \bR_{\leq t}}$ has one-dimensional cohomology, hence $\cQ_x$ has $m$-dimensional cohomology, hence $\cQ$ has rank $m$. 
	
	Now choose an open cover $\{U_\al\}$ of $M$ such that (1) each $U_\al$ is contractible, (2) the intersection of any $U_\al$ or $U_{\al\be} = U_\al \cap U_\be$ with $\ol{U}_w \cup \ol{U}_b$ is contractible, and (3) any $U_{\al\be\ga} = U_\al \cap U_\be \cap U_\ga$ is contained in a null component of $M \smallsetminus \pi(\Lambda)$. In particular, $L_\al:= L \cap \pi^{-1}(U_\al)$ is either empty or contractible for all $\al$. In the latter case $\mu sh_{L'}$ is trivializable on the associated $L'_\al \subset L'$, hence $\mu sh_{L'}(L'_\al) \cong \Sh_{L'_\al}^0(U_\al \times \bR)$ is equivalent to $\Mod_\bC$. 
	
	We claim that $p_{\al*}: \Sh^0_{L'_\al}(U_\al \times \bR) \to \Sh^{alt}_{\Lambda_\al}(U_\al)$ is an equivalence for all $\al$, where $\Lambda_\al := \Lambda \cap \pi^{-1}(U_\al)$. If $L_\al$ is empty this follows since both categories are zero. If $L_\al$ is nonempty, pick $x \in U_\al \cap (U_w \cup U_b)$. It follows from quantization preserving rank-one objects that $i_x^* p_{\al*}: \Mod_\bC \cong \Sh^0_{L'_\al}(U_\al \times \bR) \to \Mod_\bC$ is an equivalence. Thus it suffices to show $i_x^*$ restricts to an equivalence $\Sh^{alt}_{\Lambda_\al}(U_\al) \congto \Mod_\bC$. 
	
	If $U_\al$ does not contain an eversion this follows from (2), since  $\Sh^{alt}_{\Lambda_\al}(U_\al)$ is the essential image under either $j_!$ or $j_*$ of local systems on a contractible set ($U_\al \cap U_w$ or $U_\al \cap U_b$, whichever is nonempty). If $U_\al$ contains an eversion, there is an identification $\phi: U_\al \congto V$ with a neighborhood $V$ of $0 \in \bR^n$ which identifies $\Lambda_\al$ with $\Lambda_{std}$. It thus suffices to show $i_{\phi(x)}^*$ restricts to an equivalence $\Sh_{\Lambda_{std}}^{alt}(\bR^n) \congto \Mod_\bC$. But $\Lambda_{std}$ is the total Legendrian of the Legendrian isotopy in $S^*\bR^{n -1}$ induced by Reeb flow of the conormal lift of a sphere around $0 \in \bR^{n-1}$. Restriction to the $x_0$-slice containing $\phi(x)$ is then an equivalence to sheaves extended from a constant sheaf on a ball in $\bR^{n-1}$ \cite[Prop. 3.12]{GKS12}, and the claim follows. 
	
	Given conditions (2) and (3), the same argument shows that $$p_{\al_1 \cdots \al_i*}: \Sh^0_{L'_{\al_1 \cdots \al_i}}(U_{\al_1 \cdots \al_i} \times \bR) \congto \Sh^{alt}_{\Lambda_{\al_1 \cdots \al_i}}(U_{\al_1 \cdots \al_i})$$ is an equivalence for any intersection of the $U_\al$. But $p_*$ and the $p_{\al_1 \cdots \al_i*}$ are all intertwined under the restriction functors. Since $\Sh^0_{L'}(M \times \bR)$ and $\Sh^{alt}_{\Lambda}(M)$ are limits over the Cech nerves of the corresponding local categories and restriction functors, it follows that $p_*$ is an equivalence since the $p_{\al_1 \cdots \al_i*}$ are.  
\end{proof}

\begin{Proposition}\label{prop:trivialKSsheaf}
The sheaf of brane data on an alternating Lagrangian $L$ is trivializable, hence the category of Lagrangian branes supported on $L$ is equivalent to $\Loc(L)$. 
\end{Proposition}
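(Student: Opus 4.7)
The plan is to exhibit a global rank-one section of the brane-data sheaf $\mu sh_{L'}$ and then argue that any such section trivializes it. Assume first that $\Lambda := \partial_\infty L$ is connected. By Proposition~\ref{prop:explicitaltsheaf}, there exists a rank-one alternating sheaf $\cF \in \Sh^{alt}_{\Lambda}(M)$, realized concretely as any extension of $i_{b*}\bC_{\ol{U}_b}$ by $j_{w!}\omega_{U_w}[\shortminus 1]$. Running the equivalence of Proposition~\ref{prop:altsheafquant} in reverse, $\cF$ arises from a Lagrangian brane $(L, \cE)$ whose underlying brane structure $\cE \in \mu sh_{L'}(L')$ is a global section all of whose microstalks are one-dimensional.

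The next step is to argue that such a rank-one global section trivializes $\mu sh_{L'}$ as a sheaf of stable $\infty$-categories on $L'$. Recall that $\mu sh_{L'}$ is locally equivalent to the sheaf of local systems, but only up to the ambiguity of twisting by a rank-one local system, so globally it may be viewed as a gerbe over $\Loc_{L'}$ banded by the Picard groupoid of rank-one local systems. The chosen $\cE$ splits this gerbe: on any sufficiently small contractible $U \subset L'$, $\cE$ selects the unique equivalence $\mu sh_{L'}|_U \congto \Loc(U)$ sending $\cE|_U$ to $\bC_U$, and these local trivializations agree on overlaps because the transition data is fixed by the chosen section. Gluing yields a global equivalence $\mu sh_{L'} \congto \Loc_{L'}$, and passing to global sections and pulling back along $L \cong L'$ gives the desired equivalence between Lagrangian branes supported on $L$ and $\Loc(L)$.

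For disconnected $\Lambda$ I would run the same argument componentwise, applying Proposition~\ref{prop:explicitaltsheaf} to each connected component of $\Lambda$ and assembling the resulting rank-one sections into a global rank-one section on $L'$. The main obstacle is the gerbe-splitting step: it requires making precise in what sense $\mu sh_{L'}$ is a gerbe, and verifying that the local automorphism sheaf is exactly the Picard stack of rank-one local systems, so that a single global rank-one section $\cE$ pins down compatible local trivializations. The obstruction-theoretic content here is the vanishing of the relevant relative Maslov and orientation classes on $L'$, which could alternatively be confirmed by direct inspection of the local model of Lemma~\ref{lem:localmodel}; but the present argument bypasses such invariants entirely, deducing trivializability from the concrete existence of $\cE$ produced by Propositions~\ref{prop:explicitaltsheaf} and~\ref{prop:altsheafquant}.
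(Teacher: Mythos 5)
Your overall strategy matches the paper's: reduce to the existence of a rank-one alternating sheaf with $ss^\infty(\cF) = \Lambda$ via Proposition~\ref{prop:altsheafquant}, then argue that a rank-one global object of $\mu sh_{L'}$ trivializes the whole sheaf. The paper handles the latter step by directly citing~\cite[Prop.~10.2.4]{Gui23}, whereas you sketch the gerbe-splitting argument; you correctly identify this as the point needing care, and indeed it is exactly the content of Guillermou's result, so that step is fine modulo the citation.

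The genuine gap is in your first step. You write that the desired $\cF$ is ``realized concretely as any extension of $i_{b*}\bC_{\ol{U}_b}$ by $j_{w!}\omega_{U_w}[\shortminus 1]$,'' but this is false: the split extension $j_{w!}\omega_{U_w}[\shortminus 1] \oplus i_{b*}\bC_{\ol{U}_b}$ has singular support strictly larger than $\Lambda$ above every eversion, so it is not alternating. Proposition~\ref{prop:explicitaltsheaf} only characterizes which extensions are alternating (namely those in $\Sh_\Lambda(M)$); it does not by itself assert that a suitable extension exists. The computation~(\ref{eq:sheafHomalt}) shows $\Hom(i_{b*}\bC_{\ol{U}_b}, j_{w!}\omega_{U_w})$ decomposes as a product of lines indexed by eversions, and you need the extension class to be nonzero in each factor. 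This is precisely what the paper's gluing argument establishes: local rank-one alternating sheaves on the cover $\{U_\al\}$ (available by the local model of Lemma~\ref{lem:localmodel}) patch together, after suitable shifts, to a global one. You should replace ``any extension'' with ``a suitable extension'' and justify its existence by that gluing or by the $\Ext$ decomposition. Your componentwise treatment of disconnected $\Lambda$ is fine and consistent with the connectedness hypothesis in Proposition~\ref{prop:explicitaltsheaf}.
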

\begin{proof}
Again let $\Lambda := \partial_\infty L$ and let $L' \subset S^*(M \times \bR)$ be the Legendrian lift of a lower exact perturbation of $L$. The claim is equivalent to the existence of a rank-one object of $\mu sh_{L'}(L')$ \cite[Prop. 10.2.4]{Gui23}. By Proposition~\ref{prop:altsheafquant}, this is equivalent to the existence of a rank-one alternating sheaf with $ss^\infty(\cF) = \Lambda$. Fixing an open cover $\{U_\al\}$ as in the proof of Proposition~\ref{prop:altsheafquant}, we have seen that such a sheaf exists on each~$U_\al$. But it follows from Proposition \ref{prop:explicitaltsheaf} that, with suitable shifts, these glue together to the desired sheaf on $M$. 
\end{proof}

We now consider certain degenerations of the objects considered above. The following definitions summarize the hypotheses we will want to impose. The only essential point in Definition \ref{def:altlegdegen} is that we have an isotopy of alternating Legendrians whose front projections degenerate to a graph, but having a few extra conditions will simplify the proof of Proposition~\ref{prop:limitaltsheafquant}. We will use in passing that a Legendrian isotopy $\{\Lambda_t\}_{t \in \opint}$ in $S^* M$ (resp. an exact Lagrangian isotopy $\{L_t\}_{t \in \opint}$ in $T^* M$) is equivalent to the data of a Legendrian movie $\Lambda_{\opint} \subset S^*(M \times \opint)$ (resp. Lagrangian movie $L_{\opint} \subset T^*(M \times \opint)$), see \cite[Sec. A.2]{GKS12}. Similarly, given $\cF \in \Sh(M \times \opint)$, we write $\cF_t \subset \Sh(M)$ for its pullback to $M \times \{t\}$. 

\begin{Definition}\label{def:altlegdegen}
	A Legendrian isotopy $\Lambda_{\opint} \subset S^*(M \times \opint)$ is an alternating Legendrian degeneration if 
	\begin{enumerate}
		\item $\Lambda_t$ is an alternating Legendrian for all $t$, 
		\item $U_w(\Lambda_t) \subset U_w(\Lambda_{t'})$ and $U_b(\Lambda_t) \subset U_b(\Lambda_{t'})$ for all $t < t'$, 
		\item the set of points where $\pi(\Lambda_t)$ has an eversion is the same for all $t$, 
		\item $\Gamma := \lim_{t \to 0} \pi(\Lambda_t)$ has the structure of an embedded graph such that the following holds: for any $t$, each white or black component of $M \smallsetminus \pi(\Lambda_t)$ contains a single vertex of $\Gamma$, and each vertex is in some such component. 
	\end{enumerate}
\end{Definition}

\begin{Definition}
A family $\cF_{\opint} \subset Sh(M \times \opint)$ is an alternating sheaf degeneration if~$\cF_t$ is an alternating sheaf for all $t$ and if $\Lambda_{\opint} := ss^\infty(\cF_{\opint})$ is an alternating Legendrian degeneration. We say $\cF_{\opint}$ has rank $m$ if $\cF_t$ has rank $m$ for all $t$. 
\end{Definition}

\begin{Definition}
An exact Lagrangian isotopy $L_{\opint} \subset T^*(M \times \opint)$ is an alternating Lagrangian degeneration if $L_t$ is an alternating Lagrangian for all $t$ and $\Lambda_{\opint} := \partial_\infty L_{\opint}$ is an alternating Legendrian degeneration.
\end{Definition}

\begin{Proposition}\label{prop:altfamilyfillings}
	Every alternating Legendrian degeneration $\Lambda_{\opint}$ admits a filling by an alternating Lagrangian degeneration. If $L_{\opint}$ is such a filling, then sheaf quantization restricts to an equivalence between the category of branes supported on $L_{\opint}$ and the category of alternating sheaf degenerations with $ss^\infty(\cF) \subset \Lambda$. The quantization of a rank-$m$ brane is a rank-$m$ alternating sheaf degeneration. The sheaf of brane data on $L_{\opint}$ is trivializable, hence the category of branes supported on $L_{\opint}$ is equivalent to $\Loc(L_{\opint}) \cong \Loc(L)$. 
\end{Proposition}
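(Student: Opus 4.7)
The plan is to reduce all three claims to parametrized versions of Propositions~\ref{prop:altfilling}, \ref{prop:altsheafquant}, and \ref{prop:trivialKSsheaf}, carrying out their proofs on $M \times \opint$ rather than $M$. For the existence of a filling, I would first use condition (3) of Definition~\ref{def:altlegdegen} to observe that the eversions of $\pi(\Lambda_t)$ occur at a common set of points $\{x_i\} \subset M$ for all $t$. This lets one choose neighborhoods $U_i \subset M$ and $t$-independent diffeomorphisms $U_i \cong D^n$ identifying each $\Lambda_t \cap S^*U_i$ with the standard eversion $\Lambda_{std}$. Using the local model $g$ from Lemma~\ref{lem:localmodel} together with condition (2) (the monotone nesting of white and black regions), construct a smooth function $h$ on $\{(x,t) \,|\, x \in U_w(\Lambda_t) \cup U_b(\Lambda_t)\} \subset M \times \opint$ whose restriction $h_t$ satisfies the local conditions in the proof of Proposition~\ref{prop:altfilling}. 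After modifying near fiberwise infinity to be eventually conical, the closures of the graphs of $dh_t$ assemble into an exact Lagrangian movie $L_{\opint}$ filling $\Lambda_{\opint}$.

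For the quantization claims, observe that the formalism of Section~\ref{sec:sheafquant} applies with ambient manifold $M \times \opint$ in place of $M$, and that by construction alternating sheaf degenerations are exactly the objects of $\Sh_{\Lambda_{\opint}}(M \times \opint)$ whose fiberwise restrictions are alternating. I would then repeat the open cover argument of Proposition~\ref{prop:altsheafquant}, taking a cover $\{V_\alpha \times W_\alpha\}$ of $M \times \opint$ refining a product cover so that each piece satisfies the analogues of conditions (1)--(3) there. Near an eversion the $t$-uniform identification with $\Lambda_{std}$ exhibits $\Lambda_{\opint} \cap S^*(U_i \times W_\alpha)$ as a $W_\alpha$-trivial family of copies of $\Lambda_{std}$, so the local slicing argument (invoking \cite[Prop.~3.12]{GKS12}) goes through unchanged. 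Descent along this cover then gives the equivalence between branes on $L_{\opint}$ and alternating sheaf degenerations with $ss^\infty(\cF) \subset \Lambda_{\opint}$, and preservation of rank is a fiberwise computation.

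For trivializability of the sheaf of brane data on $L_{\opint}$, I would follow Proposition~\ref{prop:trivialKSsheaf}: the existence of a trivialization is equivalent to the existence of a rank-one alternating sheaf degeneration with $ss^\infty \subset \Lambda_{\opint}$. Rank-one alternating sheaves exist locally on each piece of the cover, and the argument at the end of the proof of Proposition~\ref{prop:explicitaltsheaf} --- which uses \eqref{eq:sheafHomalt} to pin down the relative degree shifts across any eversion --- continues to apply in the family setting, so these local objects glue to a global rank-one alternating sheaf degeneration. The final identification $\Loc(L_{\opint}) \cong \Loc(L)$ follows from the isotopy structure, which supplies a diffeomorphism $L_{\opint} \cong L \times \opint$ and hence a homotopy equivalence $L_{\opint} \simeq L$.

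The main obstacle is ensuring that the local-to-global arguments of Propositions~\ref{prop:altsheafquant} and \ref{prop:trivialKSsheaf} really do go through uniformly in $t$; what makes this possible is precisely the combination of conditions (2)--(4) of Definition~\ref{def:altlegdegen}. Condition (3) guarantees that the standard-model charts near eversions can be chosen $t$-independently, condition (2) makes the filling function $h$ well defined on an open subset of $M \times \opint$ with regular boundary behavior, and condition (4) prevents the cover from having to handle wild degenerations as $t \to 0$ within $\opint$. Once these are in place, the parametrized versions of the relevant sheaf-theoretic computations are essentially cosmetic extensions of their fiberwise counterparts.
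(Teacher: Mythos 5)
Your proposal is correct in outline, but it takes a substantially more laborious route than the paper does. You re-run the proofs of Propositions~\ref{prop:altfilling}, \ref{prop:altsheafquant}, and \ref{prop:trivialKSsheaf} in the parametrized setting on $M \times \opint$: constructing the filling function $h$ on the $t$-dependent white/black regions, taking a product-refined open cover of $M \times \opint$, and carrying out the local slicing and descent arguments in the family. The paper instead makes a single structural observation that collapses all three claims: conditions (2)--(4) of Definition~\ref{def:altlegdegen} guarantee a family of diffeomorphisms $\{\phi_t\}_{t \in \opint}$ of $M$ with $\phi_t(\pi(\Lambda_{t_0})) = \pi(\Lambda_t)$, so the induced family of contactomorphisms of $S^*M$ identifies the entire degeneration with the \emph{constant} Legendrian movie of $\Lambda_{t_0}$. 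Every claim then reduces to the constant-family case, which is an immediate corollary of the un-parametrized Propositions~\ref{prop:altfilling}, \ref{prop:altsheafquant}, and \ref{prop:trivialKSsheaf}. This buys a much shorter proof and also sidesteps a subtlety your write-up glosses over: the Legendrian movie $\Lambda_{\opint} \subset S^*(M \times \opint)$ carries a nonzero $dt$-component wherever the isotopy is non-constant, so the local analysis near an eversion is not literally a $W_\alpha$-trivial product of copies of $\Lambda_{std}$ and your phrase ``goes through unchanged'' would require more justification. Conjugating to the constant family eliminates the $dt$-component entirely and makes that step genuinely trivial. Your approach is not wrong, but if you keep it you should address the $dt$-direction explicitly; otherwise, the paper's conjugation trick is the better move.
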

\begin{proof}
Fix some $t_0 \in \opint$. By construction there is a family of diffeomorphisms $\{\phi_t\}_{t \in \opint}$ of $M$ such that $\phi_t(\pi(\Lambda_{t_0})) = \pi(\Lambda_t)$, hence such that the induced contactomorphism of $S^* M$ takes $\Lambda_{t_0}$ to $\Lambda_t$. The first claim then follows from the trivial fact any alternating Lagrangian filling of $\Lambda_{t_0}$ (which exists by Proposition \ref{prop:altfilling}) induces a constant filling of the Legendrian movie of the constant isotopy of $\Lambda_{t_0}$. The remaining claims are similarly equivalent to corresponding claims for the constant isotopy, which follow immediately from Propositions~\ref{prop:altsheafquant} and \ref{prop:trivialKSsheaf}. 
\end{proof}

Our next goal is to compute the limit of an alternating sheaf degeneration in the appropriate sense. Before carrying this out, we give an intrinsic description of the class of sheaves which turn out to arise as such a limit. Given an embedded bipartite graph $\Gamma \subset M$, we write $\Sh_\Gamma(M) \subset \Sh(M)$ for the subcategory of sheaves which are supported on $\Gamma$ and locally constant on the interiors of the edges of $\Gamma$ (i.e. which are constructible with respect to the obvious stratification). We note the condition on isolated vertices in the following definition is only for convenience. 

\begin{Definition}\label{def:reflocsys}
Let $\Gamma \subset M$ be an embedded bipartite graph with no isolated vertices. A sheaf $\cF \in \Sh_\Gamma(M)$ is a reflected local system of rank $m$ if it satisfies the following. 
\begin{enumerate}
	\item Its restriction to the complement of the white vertices is a local system of rank $m$.
	\item If $v$ is a $k$-valent white vertex and $x_1, \dotsc, x_k \in \Gamma$ are nearby points, one on each edge having $v$ as a vertex, then the canonical map $\cF_v \to \cF_{x_1} \oplus \cdots \oplus \cF_{x_k}$ identifies $\cF_v$ with a codimension $m$ subspace whose intersection with $\cF_{x_i}$ is zero for any $1 \leq i \leq k$. 
\end{enumerate}
\end{Definition}

The terminology is motivated as follows. Let $\Gamma_{b \to w}$ be the quiver obtained from $\Gamma$ by orienting its vertices from black to white, similarly for $\Gamma_{w \to b}$. There is an equivalence between representations of $\Gamma_{b \to w}$ (resp. $\Gamma_{w \to b}$) and constructible sheaves on $\Gamma$ which are locally constant on the complement of the black (resp. white) vertices. In particular, we may realize local systems on $\Gamma$ as representations of $\Gamma_{b \to w}$ which assign an isomorphism to each edge. On the other hand, there is an equivalence between representations of $\Gamma_{b \to w}$ and of $\Gamma_{w \to b}$ given by applying a reflection functor at each white vertex. Reflected local systems are exactly the representations of $\Gamma_{w \to b}$ which arise from local systems under this equivalence:
$$ \left\{\text{reflected  local systems}
\right\} \subset \Rep \,\Gamma_{w \to b} \cong  \Rep\, \Gamma_{b \to w} \supset \left\{\text{local systems} \right\}
.$$
In particular, the categories of local systems and reflected local systems on $\Gamma$ are canonically equivalent. 

A first indication that reflected local systems are related to alternating sheaves is that they satisfy the following analogue of Proposition \ref{prop:explicitaltsheaf}. Here we adapt the notation of Definition~\ref{def:alternatingLeg} as follows. Suppose we choose a point $x_e$ on each edge~$e$ of an embedded bipartite graph~$\Gamma$. Then we write $j_w: \Gamma_w \into M$ for the union of the components of $\Gamma \smallsetminus \bigcup_e \{x_e\}$ which contain a white vertex and $i_w: \overline{\Gamma}_w \into M$ for its closure, similarly for $j_b: \Gamma_b \into M$ and $i_b: \overline{\Gamma}_b \into M$. 

\begin{Proposition}\label{prop:refloctriangle}
	Let $\Gamma \subset M$ be an embedded bipartite graph with no isolated vertices, $\{x_e\}$ a choice of point on each edge of $\Gamma$, and $\cF \in \Sh_\Gamma(M)$. 
	Then $\cF$ is a reflected local system of rank one if and only if there exists a triangle
	\begin{equation}\label{eq:refloctri2} j_{w!}\omega_{\Gamma_w}[\shortminus 1] \to \cF \to i_{b*}\bC_{\ol{\Gamma}_b}.\end{equation}
	Conversely, an extension $\cF \in \Sh(M)$ of this form is a reflected local system if and only if it belongs to $\Sh_\Gamma(M)$. 
\end{Proposition}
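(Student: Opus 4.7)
My plan is to mirror the proof of Proposition~\ref{prop:explicitaltsheaf}, using the open--closed decomposition $M = (M \smallsetminus \ol{\Gamma}_b) \sqcup \ol{\Gamma}_b$ together with the standard triangle $j_! j^* \cF \to \cF \to i_{b*} i_b^* \cF$, where $j: M \smallsetminus \ol{\Gamma}_b \hookrightarrow M$ is the open inclusion.

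For the forward direction, I would start with a rank-one reflected local system $\cF$. Each component of $\ol{\Gamma}_b$ is a contractible closed star around a black vertex and $\cF|_{\ol{\Gamma}_b}$ is a rank-one local system by hypothesis, so a noncanonical trivialization gives $i_b^* \cF \cong \bC_{\ol{\Gamma}_b}$. For the left term, $\cF$ is supported on $\Gamma$ and $\Gamma \cap (M \smallsetminus \ol{\Gamma}_b) = \Gamma_w$, which is closed in $M \smallsetminus \ol{\Gamma}_b$; factoring $j_w$ through this closed inclusion and using that $j_!$ commutes with closed pushforward, I would rewrite $j_! j^* \cF$ as $j_{w!}\,j_w^* \cF$. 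It then remains to identify $j_w^* \cF$ with $\omega_{\Gamma_w}[\shortminus 1]$. Away from white vertices both are rank-one local systems on open edge segments. At a $k$-valent white vertex $v$, I would apply the standard triangle $i_{v*}i_v^! \to \id \to j_{e*}j_e^*$ to $\omega_{\Gamma_w}$ (with $j_e$ the inclusion of the complement of $v$) to identify $(\omega_{\Gamma_w}[\shortminus 1])_v$ with the kernel of the sum map on the direct sum of adjacent edge stalks. The reflected local system axiom specifies $\cF_v$ as a codimension-one subspace of $\bigoplus_i \cF_{x_i}$ with trivial intersection with each summand, i.e., the kernel of a linear functional with all coefficients nonzero; after rescaling each $\cF_{x_i}$ this subspace becomes the kernel of the sum map, yielding a local isomorphism $j_w^* \cF \cong \omega_{\Gamma_w}[\shortminus 1]$ near $v$. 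Since $\Gamma_w$ decomposes as a disjoint union of open stars, these local isomorphisms then assemble into a global one.

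For the reverse direction, assuming the triangle and $\cF \in \Sh_\Gamma(M)$, I would first compute the stalks, obtaining $\bC$ on open edges and at each $x_e$, $\bC$ at each black vertex, and $\bC^{k\shortminus 1}$ at each $k$-valent white vertex. Near $x_e$, the triangle restricts on the edge to an extension of a constant sheaf on the closed black half by a constant sheaf on the open white half; the constructibility hypothesis $\cF \in \Sh_\Gamma(M)$ forces the nontrivial (locally constant) extension across $x_e$, yielding condition~(1) of Definition~\ref{def:reflocsys}. For condition~(2), a small enough neighborhood of a white vertex meets neither $\ol{\Gamma}_b$ nor any $x_e$, so $\cF$ there coincides with $j_{w!}\omega_{\Gamma_w}[\shortminus 1]$, whose structure at $v$ is already the required codimension-one subspace with trivial axis intersections.

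The main obstacle will be the identification $j_w^* \cF \cong \omega_{\Gamma_w}[\shortminus 1]$ in the forward direction, combining the local stalk computation of the dualizing sheaf at a non-manifold vertex of $\Gamma_w$ with the linear-algebraic recognition that codimension-one subspaces with trivial axis intersections are precisely kernels of linear functionals with all coefficients nonzero, hence gauge-equivalent to the kernel of the sum map. Everything else reduces to standard base change manipulations and stalk bookkeeping.
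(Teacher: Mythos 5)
Your proposal is correct and matches the paper's approach in all essentials: both arguments pivot on the open--closed triangle $j_!j^*\cF \to \cF \to i_{b*}i_b^*\cF$ for $j: M \smallsetminus \ol{\Gamma}_b \into M$, together with the reduction $j_!j^*\cF \cong j_{w!}j_w^*\cF$ using that $\cF$ is supported on $\Gamma$, and a local analysis at white vertices showing that $\omega_{\Gamma_v}[\shortminus 1]$ is (up to gauge, i.e.\ rescaling the edge stalks) the unique rank-one reflected local system on an open star $\Gamma_v$. The only difference worth flagging is cosmetic: where you compute $(\omega_{\Gamma_w}[\shortminus 1])_v$ from the closed-point recollement triangle $i_{v*}i_v^! \to \id \to j_{e*}j_e^*$ (and so need to identify a connecting map), the paper dualizes the triangle $\oplus_i \bC_{(v,x_{e_i})} \to \bC_{\Gamma_v} \to \bC_{\{v\}}$ to exhibit $\omega_{\Gamma_v}[\shortminus 1]$ directly as the kernel of the sum map $\oplus_i \bC_{[v,x_{e_i})} \onto \bC_{\{v\}}$, which gives the degree-zero concentration and the codimension-one stalk with trivial axis intersections in one stroke, after which both directions of the iff are immediate from the uniqueness. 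Either route works; the paper's is marginally tidier because it avoids pinning down the connecting map.
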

\begin{proof}
Let $v$ be a $k$-valent white vertex of $\Gamma$, and let $\Gamma_v$ be the component of $\Gamma_w$ containing~$v$. If $e_1, \dotsc, e_k$ are the edges having $v$ as a vertex, we write $(v,x_{e_i}) \subset \Gamma$ (resp. $[v,x_{e_i}) \subset \Gamma$ ) for the open (resp. half-open) interval between $v$ and $x_{e_i}$. In $\Sh(\Gamma_v)$ we have triangles 
$$\oplus_{i=1}^k \bC_{(v,x_{e_i})} \to \bC_{\Gamma_v} \to \bC_{\{v\}} \quad\text{and}\quad \omega_{\Gamma_v}[\shortminus 1] \to \oplus_{i = 1}^k \bC_{[v,x_{e_i})} \to \bC_{\{v\}},$$
where the left is trivial and the right follows by applying $\bD$ and rotating. Note that the second map in the right triangle is an epimorphism since it is so on stalks, hence $\omega_{\Gamma_v}[\shortminus 1]$ is concentrated in degree zero. It follows by inspection that $\omega_{\Gamma_v}[\shortminus 1]$ is a rank-one reflected local system (technically $\Gamma_v$ is not bipartite, but nonetheless $\omega_{\Gamma_v}[\shortminus 1]$ satisfies the conditions of Definition~\ref{def:reflocsys}), and that moreover it is the unique such object of $\Sh(\Gamma_v)$ up to isomorphism. 

Now write $j: M \smallsetminus \ol{\Gamma}_b \into M$ for the complement of $\ol{\Gamma}_b$. Given $\cF \in \Sh_\Gamma(M)$, consider the triangle
\begin{equation}\label{eq:refloctri} j_{!}j^! \cF \to \cF \to i_{b*} i^*_b \cF. \end{equation}
Since $\cF$ is supported on $\Gamma$, the natural map $j_{w!} j^*_w \cF \to j_! j^! \cF$ is an isomorphism.  Given the previous paragraph, it now follows that the conditions of Definition~\ref{def:reflocsys} are equivalent to the conditions that $j^*_w \cF$ and $i^*_b \cF$ are respectively isomorphic to $\omega_{\Gamma_w}[\shortminus 1]$ and $j_{b*}\bC_{\Gamma_b}$, hence to the condition that (\ref{eq:refloctri}) is of the form (\ref{eq:refloctri2}). The second claim follows since if $\cF \in \Sh(M)$ is such an extension, then on any edge $e$ it is either locally constant or is locally isomorphic to $j_{w!}\omega_{\Gamma_w}[\shortminus 1] \oplus i_{b*}\bC_{\ol{\Gamma_b}}$, and in the latter case $\cF$ does not belong to $\Sh_\Gamma(M)$. 
\end{proof}

Given $\cF_{\opint} \in \Sh(M \times \opint)$, one makes sense of the limit $\lim_{t \to 0} \cF_t \in \Sh(M)$ as the (real) nearby cycles 
$$ \psi \cF_{\opint} := i^* j_{*} \cF_{\opint},$$
where $i: M \times \{0 \} \into M \times \clint$ and $j: M \times \opint \into M \times \clint$ are the inclusions. We can now state more precisely the relation between alternating sheaves and reflected local systems. 

\begin{Proposition}\label{prop:limitaltsheafquant}
	Let $\cF_{\opint} \in \Sh(M \times \opint)$ be a rank-one alternating sheaf degeneration and $\Lambda_{\opint} := ss^\infty(\cF_{\opint})$. Then, up to a degree shift, $\psi \cF_{\opint}$ is a rank-one reflected local system on $\Gamma := \lim_{t \to 0} \pi(\Lambda_t)$. Conversely, any rank-one reflected local system on an embedded bipartite graph without isolated vertices is of this form for some alternating sheaf degeneration. 
\end{Proposition}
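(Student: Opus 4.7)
\emph{Forward direction.} The plan is to apply $\psi$ to the triangle decomposition of $\cF_\opint$ provided by Proposition \ref{prop:explicitaltsheaf}, whose argument extends verbatim to the family setting: writing $U_w(\opint), U_b(\opint) \subset M \times \opint$ for the total spaces of the white and black component families of $(M \times \opint) \smallsetminus \pi(\Lambda_\opint)$, we have after a uniform degree shift
$$j_{w!}\omega_{U_w(\opint)}[\shortminus 1] \to \cF_\opint \to i_{b*}\bC_{\ol U_b(\opint)}.$$
I would identify $\psi$ of each outer term with the corresponding piece of the reflected local system triangle of Proposition \ref{prop:refloctriangle}. The right-hand term is the more straightforward: the monotonicity $\ol U_b(\Lambda_t) \subset \ol U_b(\Lambda_{t'})$ from Definition \ref{def:altlegdegen}, together with the retraction of $\ol U_b(\Lambda_t)$ onto $\ol \Gamma_b$ as $t \to 0$, should verify the hypothesis of Lemma \ref{lem:vancycles} and yield $\psi i_{b*}\bC_{\ol U_b(\opint)} \cong i_{b*}\bC_{\ol \Gamma_b}$.

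\emph{The white term.} For $\psi j_{w!}\omega_{U_w(\opint)}[\shortminus 1]$, I would compute stalks directly via the cofiber triangle $j_{w!}\bC_{U_w(\opint)} \to \bC_{\ol U_w(\opint)} \to \bC_{\partial U_w(\opint)}$, with the middle term again handled by Lemma \ref{lem:vancycles}. At a smooth point of $\Gamma_w$ or a midpoint $x_e$ the boundary $\partial U_w(\opint)$ degenerates tamely, and the limiting stalk matches that of $j_{w!}\omega_{\Gamma_w}[\shortminus 1]$ by direct calculation. At a white vertex $v$ of valence $k$ the boundary $\partial U_w(\Lambda_t)$ carries $k$ conical singularities from the eversions adjacent to $v$, and its relevant cohomology must be extracted using the local model of Definition \ref{def:stdev} and the alternating structure near each eversion. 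The expected outcome is that the stalk matches the $(k\shortminus 1)$-dimensional space identified in the proof of Proposition \ref{prop:refloctriangle}, giving $\psi j_{w!}\omega_{U_w(\opint)}[\shortminus 1] \cong j_{w!}\omega_{\Gamma_w}[\shortminus 1]$ up to an overall shift by $n\shortminus 1$ coming from the change in ambient dimension. Proposition \ref{prop:refloctriangle} then promotes the resulting triangle for $\psi \cF_\opint$ to the identification of $\psi \cF_\opint$ as a rank-one reflected local system on~$\Gamma$.

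\emph{Converse direction.} Given a rank-one reflected local system $\cG$ on $\Gamma$, I would construct $\Lambda_\opint$ by first choosing an alternating Legendrian $\Lambda \subset S^* M$ whose white and black components are tubular thickenings of $\Gamma_w$ and $\Gamma_b$ meeting at eversions near the midpoints $x_e$, built from local models along edges and at vertices, and then Hamiltonian isotoping $\Lambda$ to shrink toward $\Gamma$. Proposition \ref{prop:altfamilyfillings} provides an alternating Lagrangian filling $L_\opint$ and identifies branes on it with $\Loc(L)$. The reflection correspondence between representations of $\Gamma_{b \to w}$ and $\Gamma_{w \to b}$ translates $\cG$ into an ordinary rank-one local system on $\Gamma$, which pulls back along the natural retraction $L \to \Gamma$ to brane data on $L_\opint$; applying the forward direction to the resulting sheaf quantization $\cF_\opint$ then recovers $\psi \cF_\opint \cong \cG$.

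\emph{Main obstacle.} The hardest step will be the boundary cohomology analysis at white vertices in the white-term computation. Lemma \ref{lem:vancycles} does not apply directly because the degeneration of $\partial U_w(\opint)$ near white vertices is not tame in its sense, and the cone structure of each eversion contributes intricately to the limiting cohomology. I expect the local model of Definition \ref{def:stdev} to be rigid enough to carry out an explicit stalk computation, but a cleaner alternative---which would bypass this analysis entirely---might be to work Lagrangian-side, showing directly that the nearby cycles of a brane on $L_\opint$ produce the reflection of the underlying graph local system via the sheaf quantization formalism of Proposition \ref{prop:altfamilyfillings}.
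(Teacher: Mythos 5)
Your forward-direction plan correctly identifies the right triangle to apply $\psi$ to, and the handling of the black term via Lemma~\ref{lem:vancycles} matches the paper. But the white-term computation, which you flag as the ``main obstacle,'' is precisely where your approach breaks down and where the paper deploys a trick you miss: rather than attacking $\psi j'_{w!}\omega_{U'_w}$ by stalk analysis near the eversions, the paper first observes $j'_{w!}\omega_{U'_w} \cong \bD j'_{w*}\bC_{U'_w}$, then uses the fact that $\ol{U'}_w$ is a manifold with boundary with interior $U'_w$ to replace $j'_{w*}\bC_{U'_w}$ by $i'_{w*}\bC_{\ol{U'}_w}$, and finally invokes Lemma~\ref{lem:verdiervan} to commute $\psi$ past $\bD$ (at the cost of a shift). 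The upshot is $\psi j'_{w!}\omega_{U'_w} \cong (\bD\, \psi\, i'_{w*}\bC_{\ol{U'}_w})[1]$, and the inner nearby cycles is now a closed-set, constant-sheaf computation where Lemma~\ref{lem:vancycles} applies cleanly, since the closure of $\ol{U'}_w$ in $M\times\clint$ is topologically tame (a union of $(n+1)$-balls with a disk removed from each boundary and $\ol{\Gamma}_w$ sitting in the remaining boundary). Your proposed cofiber triangle $j_{w!}\bC \to \bC_{\ol{U}_w} \to \bC_{\partial U_w}$ is in the wrong degree (you want $\omega$, not $\bC$) and leads exactly to the messy boundary cohomology at eversions you worry about; duality avoids it entirely. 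Also, watch the shift in your opening triangle: the family version on $M\times\opint$ needs $[\shortminus 2]$ rather than $[\shortminus 1]$, because $\omega_{U'_w}$ now lives on an $(n+1)$-manifold.

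There is a second gap: obtaining the triangle $j_{w!}\omega_{\Gamma_w}[\shortminus 1] \to \psi\cF_\opint \to i_{b*}\bC_{\ol\Gamma_b}$ is \emph{not} sufficient by itself to conclude $\psi\cF_\opint$ is a reflected local system, because Proposition~\ref{prop:refloctriangle} also requires $\psi\cF_\opint \in \Sh_\Gamma(M)$, i.e.\ the extension must be nontrivial at every edge midpoint $x_e$. The paper closes this with a microsupport estimate: if the extension were trivial at some $x_e$ then $ss(\psi\cF_\opint)$ at $x_e$ would exceed $N^*_\Gamma M$, contradicting the bound $ss^\infty(\psi\cF_\opint) \subset \lim_{t\to 0}\Lambda_t \subset N^*_\Gamma M$ (using \cite[Lem.~3.16]{NS20}). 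Your proposal does not address this at all. For the converse, your Legendrian-construction idea is in the right spirit, but the paper instead proves directly that $\psi$ induces an isomorphism $\Hom(i'_{b*}\bC_{\ol{U'}_b}, j'_{w!}\omega_{U'_w}[\shortminus 1]) \to \Hom(i_{b*}\bC_{\ol\Gamma_b}, j_{w!}\omega_{\Gamma_w})$ by decomposing both sides as sums over eversions, which gives both the converse and the nondegeneracy claim in one stroke without detouring through brane data.
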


\begin{proof}
	By condition (3) of Definition \ref{def:altlegdegen}, each edge $e$ of $\Gamma$ contains a unique eversion $x_e$ of $\pi(\Lambda_t)$ for any $t$, this point is independent of~$t$, and every eversion is of this form. By condition (3) of Definition \ref{def:alternatingLeg} there are no isolated vertices of $\Gamma$. 
	Let $j_w: \Gamma_w \into M$, $j_b: \Gamma_b \into M$, $i_w: \ol{\Gamma}_w \into M$, and $i_b: \ol{\Gamma}_b \into M$ be as in the paragraph before Proposition~\ref{prop:refloctriangle}. 
	By the statement of that proposition, to show $\psi \cF_{\opint}$ is a rank-one reflected local system, it suffices to show that there exists a triangle
	\begin{equation}\label{eq:Upsitri} j_{w!}\omega_{\Gamma_w}[\shortminus 1] \to \psi \cF_{\opint} \to i_{b*}\bC_{\ol{\Gamma}_b} \end{equation}
	and that $\psi \cF_{\opint}$ belongs to $\Sh_\Gamma(M)$. 
	
	Define $j'_w: U'_w \into M \times \opint$ so that for any $t$ we have $U'_{w,t} = U_w(\Lambda_t)$, and write $i'_w: \ol{U'}_w \into M \times \opint$ for its closure. Define $j'_b: U'_b \into M \times \opint$ and $i'_b: \ol{U'}_b \into M \times \opint$ similarly. By Proposition \ref{prop:explicitaltsheaf} and \cite[Prop. 3.12]{GKS12} (or by adapting the proof of Proposition~\ref{prop:explicitaltsheaf} to $M \times \opint$) we may shift $\cF_{\opint}$ so that there exists a triangle
	\begin{equation}\label{eq:graphtpsitri} j'_{w!}\omega_{U'_w}[\shortminus 2] \to \cF_{\opint} \to i'_{b*}\bC_{\ol{U'}_b}.\end{equation}
	
	Now note that $\ol{U'}_w$ is topologically a manifold with boundary (each component is $\opint$ times a closed $n$-ball) and $U'_w$ is its interior. In particular, $j'_{w*} \bC_{U'_w} \cong i'_{w*} \bC_{\ol{U'}_w}$. The closure of $\ol{U'}_w$ in $M \times \clint$ decomposes as $\ol{U'}_w \sqcup \ol{\Gamma}_w$, and topologically is a union of closed $(n+1)$-balls minus an open disk in the boundary of each, with $\ol{\Gamma}_w$ embedded into the remaining boundary. Clearly $i'_{w*} \bC_{\ol{U'}_w}$ satisfies the hypothesis of Lemma \ref{lem:verdiervan}, hence we have
	\begin{align*}
		\psi j'_{w!} \omega_{U'_w} \cong \psi \bD j'_{w*} \bC_{U'_w} 
		\cong \psi \bD i'_{w*} \bC_{\ol{U'}_w}
		\cong (\bD \psi i'_{w*} \bC_{\ol{U'}_w}) [1].
	\end{align*}
	Similarly $\ol{U'}_w$ satisfies the hypotheses of Lemma \ref{lem:vancycles}, hence we have
	\begin{align*}
		\bD \psi i'_{w*} \bC_{\ol{U'}_w}
		\cong \bD i_{w*} \bC_{\ol{\Gamma}_w}
		\cong \bD j_{w*} \bC_{\Gamma_w}
		\cong j_{w!} \omega_{\Gamma_w}.
	\end{align*}
The same considerations together with Lemma \ref{lem:vancycles} imply $\psi i'_{b*}\bC_{\ol{U'}_b} \cong i_{b*}\bC_{\ol{\Gamma}_b}$, hence we obtain a triangle (\ref{eq:Upsitri}) by applying $\psi$ to (\ref{eq:graphtpsitri}). 

Given this triangle, it follows that if $\psi \cF_{\opint}$ does not belong to $\Sh_\Gamma(M)$, then at some $x_e$ the extension (\ref{eq:Upsitri}) is trivial. In particular, $ss(\psi \cF_{\opint}) \cap T^*_{x_e}M$ would be strictly larger than $N^*_\Gamma M \cap T^*_{x_e}M$. But $\lim_{t\to 0} \Lambda_t \subset N^*_\Gamma M$ by hypothesis, so this would contradict the fact that $ss^\infty(\psi \cF_{\opint}) \subset \lim_{t\to 0} \Lambda_t$ \cite[Lem. 3.16]{NS20}. Thus $\psi \cF_{\opint}$ is a reflected local system. 

Next we show that every rank-one reflected local system on $\Gamma$ is of this form. In fact, we claim that the map 
\begin{equation}\label{eq:Hommap1} \Hom(i'_{b*}\bC_{\ol{U'}_b},j'_{w!}\omega_{U'_w}[\shortminus 1]) \to \Hom(i_{b*}\bC_{\ol{\Gamma}_b}, j_{w!}\omega_{\Gamma_w}) 
	\end{equation}
induced by $\psi$ is an isomorphism, and that the fiber of $f: i'_{b*}\bC_{\ol{U'}_b} \to j'_{w!}\omega_{U'_w}[\shortminus 1]$ is an alternating sheaf degeneration if and only if the fiber of $\psi(f)$ is a reflected local system. 

Choose a neighborhood $U_e \subset M$ of each $x_e$ as in Definition \ref{def:stdev}.  We first prove the corresponding claims for the induced map 
\begin{equation}\label{eq:Hommap2} \Hom(i'_{b*}\bC_{\ol{U'}_b}|_{U_e \times \opint},  j'_{w!}\omega_{U'_w}[\shortminus 1]|_{U_e \times \opint}) \to \Hom( i_{b*}\bC_{\ol{\Gamma}_b}|_{U_e}, j_{w!}\omega_{\Gamma_w}|_{U_e}). \end{equation}
The calculation (\ref{eq:sheafHomalt}) extends to show both sides are one-dimensional and concentrated in degree zero. Moreover, the fiber of $f_e:  i'_{b*}\bC_{\ol{U'}_b}|_{U_e \times \opint} \to j'_{w!}\omega_{U'_w}[\shortminus 1]|_{U_e \times \opint}$ is an alternating sheaf degeneration (on $U_e \times \opint$) if and only if $f_e \neq 0$, and the fiber of $\psi(f_e)$ is a reflected local system if and only if $\psi(f_e) \neq 0$. But the desired claims are then equivalent to $\psi$ taking alternating sheaf degenerations to reflected local systems, which we have shown. 

On the other hand, (\ref{eq:sheafHomalt}) also extends to show that we have isomorphisms
\begin{gather*}  \Hom(i'_{b*}\bC_{\ol{U'}_b},  j'_{w!}\omega_{U'_w}[\shortminus 1]) \cong \bigoplus_e \Hom(i'_{b*}\bC_{\ol{U'}_b}|_{U_e \times \opint},  j'_{w!}\omega_{U'_w}[\shortminus 1]|_{U_e \times \opint}) \\
\Hom( i_{b*}\bC_{\ol{\Gamma}_b}, j_{w!}\omega_{\Gamma_w}) \cong 	\bigoplus_e \Hom( i_{b*}\bC_{\ol{\Gamma}_b}|_{U_e}, j_{w!}\omega_{\Gamma_w}|_{U_e}).
\end{gather*}
These are compatible with $\psi$, hence the claims about the map (\ref{eq:Hommap1}) follow from the corresponding claims for the maps (\ref{eq:Hommap2}).

The last claim of the Proposition now follows since any embedded bipartite graph $\Gamma$ without isolated vertices is of the form $\lim_{t \to 0} \pi(\Lambda_t)$ for some alternating Legendrian degeneration. Such a degeneration is constructed by (1) taking the boundary of a tubular neighborhood of $\Gamma$, (2) gluing in a copy of the cone $\pi(\Lambda_{std})$ from Definition \ref{def:stdev} along each edge, (3) degenerating this modified boundary onto $\Gamma$, and (4) taking a Legendrian lift. 
	\end{proof}

	We now return to the setting of Sections \ref{sec:restoco} and \ref{sec:sheaves}, fixing a bounded sequence $F^\bullet$ of finite-rank free $R_n$-modules and the resulting constructions $X(F^\bullet)$, $T(F^\bullet)$, and $C^\bullet (F^\bullet)$. We say $T(F^\bullet)$ is embedded if $X(F^\bullet)$ is immersed and the interiors of distinct simplices have disjoint images in~$T^n$. Note that if the $x_i$ are generic and $F^{k} \cong 0$ for $k \notin \{0, \shortminus 1\}$, then this is automatic if $n > 2$. In the embedded, two-term case $T(F^\bullet)$ is an embedded bipartite graph, and we adopt the convention that its vertices are white or black if their degree is $0$ or $\shortminus 1$, respectively. 
	
	\begin{Theorem}\label{thm:hypersmoothing}
		Suppose that $F^{k} \cong 0$ for $k \notin \{0, \shortminus 1\}$, and that $T(F^\bullet)$ is embedded and has no isolated vertices. Then $C^\bullet(F^\bullet)$ is a reflected local system on $T(F^\bullet)$ placed in degree~$\shortminus 1$. In particular, there exists an isotopy $L_{\opint}$ of exact Lagrangian branes whose coamoebae satisfy $\lim_{t \to 0} C(L_t) = T(F^\bullet)$ and whose sheaf quantization $\cF_{\opint} \in \Sh(T^n \times \opint)$ satisfies $\psi \cF_{\opint} \cong C^\bullet(F^\bullet)$. 
	\end{Theorem}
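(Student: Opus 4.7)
The plan is to proceed in two stages: first, to verify that $C^\bullet(F^\bullet)$ is quasi-isomorphic to a rank-one reflected local system on $\Gamma := T(F^\bullet)$ placed in degree $\shortminus 1$; second, to obtain the Lagrangian realization by combining the converse direction of Proposition \ref{prop:limitaltsheafquant} with Proposition \ref{prop:altfamilyfillings}.

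The first stage begins with a stalk computation exploiting the two-term hypothesis. Under this hypothesis, $S_a = \{x_a\}$ for $a \in I_0$, while for $b \in I_{\shortminus 1}$ the set $S_b$ is a star at $x_b$ with leaves at $x_a + m$ indexed by pairs $(a, m)$ with $a \in I_0$ and $m \in \Exp_{ab}$. Since $\Gamma$ is embedded and the white and black images in $T^n$ are distinct, at a black vertex $v = \pi(x_b)$ only the central vertex of $S_b$ itself contributes to the degree-$\shortminus 1$ stalk; on an edge interior, only the unique summand indexed by the adjacent black vertex contributes; and at a white vertex $v = \pi(x_a)$ the degree-$\shortminus 1$ stalk is $\bigoplus_{b \in I_{\shortminus 1}, m \in \Exp_{ab}} \bC$. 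The degree-$0$ stalk vanishes away from the white vertices, and at $v = \pi(x_a)$ is $\bC$ with differential given by the linear functional $(\al_{b,m}) \mapsto \sum c_{abm} \al_{b,m}$. Since $v$ is non-isolated and every $c_{abm}$ is nonzero, this functional is surjective; hence $H^0(C^\bullet) = 0$, and setting $\cF := H^{\shortminus 1}(C^\bullet) = \ker(d)$ we obtain $C^\bullet(F^\bullet) \cong \cF[1]$.

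I would next verify that $\cF$ satisfies Definition \ref{def:reflocsys}. It is supported on $\Gamma$, locally constant of rank one on edge interiors, and has one-dimensional stalk at each black vertex with generization to nearby edge points an isomorphism, so $\cF$ is locally constant of rank one away from the white vertices. At a white vertex $v = \pi(x_a)$, the stalk $\cF_v$ is the hyperplane $\{\sum c_{abm} \al_{b,m} = 0\} \subset \bigoplus_{(b,m)} \bC$, and the generization map to nearby edge points, one per adjacent edge, assembles into the canonical inclusion of this hyperplane into the ambient space $\bigoplus_{(b,m)} \bC$. Because every $c_{abm}$ is nonzero, this hyperplane intersects each coordinate axis trivially, which is precisely the reflected local system condition.

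For the second stage, the converse direction of Proposition \ref{prop:limitaltsheafquant} produces a rank-one alternating sheaf degeneration $\cF_\opint \in \Sh(T^n \times \opint)$, with shift adjusted so that $\psi \cF_\opint \cong C^\bullet(F^\bullet)$, and with associated alternating Legendrian degeneration $\Lambda_\opint := ss^\infty(\cF_\opint)$ satisfying $\lim_{t \to 0} \pi(\Lambda_t) = \Gamma$. Proposition \ref{prop:altfamilyfillings} then provides an alternating Lagrangian degeneration $L_\opint$ filling $\Lambda_\opint$, and identifies $\cF_\opint$ as the sheaf quantization of a rank-one Lagrangian brane on $L_\opint$. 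Finally, $C(L_t) = \ol{U}_w(\Lambda_t) \cup \ol{U}_b(\Lambda_t)$ collapses as $t \to 0$ onto $\lim_{t \to 0} \pi(\Lambda_t) = T(F^\bullet)$. The main obstacle is the bookkeeping in stage one, specifically tracking the indices $(b, m)$ through the projection $\bR^n \to T^n$ and verifying that the kernel hyperplane meets every coordinate axis in $\{0\}$; once this is in hand, stage two is essentially a formal consequence of the machinery developed earlier in the section.
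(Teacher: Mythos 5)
Your proof is correct and follows essentially the same route as the paper's: reduce to the two-term presentation $\oplus_{b \in I_{-1}} \pi_* \bC_{S_b} \xrightarrow{d} \oplus_{a \in I_0} \bC_{\{\theta_a\}}$, argue that $d$ is an epimorphism (using that every white vertex is non-isolated and the coefficients $c_{abm}$ are nonzero), so that $C^\bullet(F^\bullet)$ is quasi-isomorphic to $\ker(d)$ in degree $\shortminus 1$, then check that the kernel is a reflected local system, and finally invoke Propositions~\ref{prop:limitaltsheafquant} and~\ref{prop:altfamilyfillings} for the geometric realization. The only difference is one of exposition: the paper's "by inspection" step, where it observes the kernel is a reflected local system, is exactly the stalk computation you spell out (the hyperplane $\{\sum c_{abm}\al_{b,m} = 0\}$ and the trivial-intersection-with-axes condition from nonvanishing of the $c_{abm}$), so your argument makes that inspection explicit rather than taking a genuinely different path.
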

	\begin{proof}
		We have $S_i = \{x_i\}$ if $\deg(i) = 0$, hence $C^\bullet(F^\bullet)$ is of the form 
		$$ \cdots \to 0 \to \oplus_{i \in I_{\shortminus 1}} \pi_* \bC_{S_i} \xrightarrow{d} \oplus_{i \in I_0} \bC_{\{\theta_i\}} \to 0 \to \cdots,$$
		where $\theta_i := \pi(x_i)$. 
		If $\deg(i) = \shortminus 1$ and $\theta_i$ is $k$-valent vertex of $T(F^\bullet)$, then $S_i$ is a star-shaped graph with $k$-valent central vertex $x_i$, and $\pi$ maps its edges onto the edges of $T(F^\bullet)$ having~$\theta_i$ as a vertex.
		If $\theta_i$ is connected by an edge to a white vertex $\theta_j$, then by construction the $ji$-entry of $d$ induces a nonzero map of stalks at $\theta_j$. It follows that $d$ is an epimorphism, hence $C^\bullet(F^\bullet)$ is isomorphic to its kernel placed in degree $\shortminus 1$. But it follows by inspection that this kernel is a reflected local system. The remaining claims now follow from Propositions~\ref{prop:altfamilyfillings} and \ref{prop:limitaltsheafquant}. 
	\end{proof}
	
	\begin{Remark}\label{rem:immersedgraphLag}
		We expect the sheaf-theoretic parts of Theorem \ref{thm:hypersmoothing} extend to the case where $X(F^\bullet)$ is merely immersed without major change. However, it is less clear what the correct treatment of immersed sheaf quantization should look like; see \cite[Sec. 1.11.2]{JT17} for a discussion of this. 
	\end{Remark}
	
\section{Minimality}\label{sec:minimality}

In this section we consider the relationship between projective dimensions of $R_n$-modules and support dimensions of mirror constructible sheaves. Recall that in Conjecture~\ref{conj:minimality} we speculated that if $\cF \in \Sh^b(T^n)$ is mirror to $M \in \Mod_{R_n}^\heartsuit$, then $\dim \supp(\cF) \geq \pd(M)$. Here $\dim \supp(\cF)$ means the dimension of a top-dimensional stratum of $\supp(\cF)$, and $\pd(M)$ is the projective dimension of $M$. If $F^\bullet$ is a minimal projective resolution of $M$, then $\dim \supp (C^\bullet(F^\bullet)) = \pd(M)$ by construction, so the conjecture would provide a minimality condition that distinguishes $T(F^\bullet)$ from the support of an arbitrary mirror of $M$. 

As evidence for the conjecture, we prove here that it holds under the additional hypothesis that $\cF$ is concentrated in a single cohomological degree. 
In fact, the conclusion then holds without assuming $M$ is concentrated in a single degree. For general $M \in \Mod_{R_n}$, we take $\pd(M)$ to mean the length of the smallest interval $[a,b]$ such that $\Ext^i(M,N) \cong 0$ for all $i \notin [a,b]$ and $N \in \Mod_{R_n}^\heartsuit$. 

We caution, however, that $\dim \supp(\cF) \geq \pd(M)$ certainly does not hold for arbitrary mirror pairs $\cF \in \Sh^b(T^n)$ and $M \in \Mod_{R_n}$. For example, $\bC_{\{\pi(0)\}}[k] \oplus \bC_{\{\pi(0)\}}$ has zero-dimensional support, but its mirror $R_n[k] \oplus R_n$ has projective dimension $|k|$. 

\begin{Theorem}\label{thm:minimality}
Suppose that $\cF \in \Sh^b(T^n)$ is concentrated in a single cohomological degree and is mirror to $M \in \Mod_{R_n}$. Then $\dim \supp(\cF) \geq \pd(M)$. 
\end{Theorem}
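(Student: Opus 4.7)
The plan is to translate $\pd(M) \le \dim\supp(\cF)$ into a vanishing statement for sheaf $\Ext$ groups and prove it via Verdier duality. Since both quantities are invariant under cohomological shifts of $\cF$, we first shift to assume $\cF \in \Sh^b(T^n)^\heartsuit$; set $d := \dim\supp(\cF)$. Moreover $M$ is perfect (since $\wr \cF$ is compact in $\Loc(T^n)$), so by Noetherianness of $R_n$ it suffices to bound $\Ext^i_{R_n}(M,N)$ for $N \in \Mod^\heartsuit_{R_n}$ finitely generated.

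For such $N$, pick a finite-rank $L_N \in \Loc(T^n)$ with $\Hom(\pi_!\omega_{\bR^n}, L_N) \cong N$, as supplied by the mirror equivalence. The adjunction $\wr \dashv \incl$ yields $\Ext^i_{R_n}(M,N) \cong \Hom_{\Sh(T^n)}(\cF, L_N[i])$. Along the lines of Proposition \ref{prop:endcomp}, using $\pi^! \cong \pi^*$ and $\omega_{\bR^n} \cong \bC_{\bR^n}[n]$, one computes directly that $\Hom(\pi_!\omega_{\bR^n}, L)$ is the monodromy module of $L$ shifted by $[-n]$ for any ordinary local system $L$. Consequently $L_N \cong L[n]$ for some ordinary local system $L$, and
\[
\Ext^i_{R_n}(M,N) \cong \Ext^{i+n}_{\Sh(T^n)}(\cF, L).
\]
It thus suffices to show $\Ext^j_{\Sh(T^n)}(\cF, L) = 0$ for $j \notin [n-d, n]$ and any finite-rank local system $L$. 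Combining $\cHom(L^\vee, \bC_{T^n}) \cong L$ with $\bC_{T^n} \cong \omega_{T^n}[-n]$ and Verdier duality on the compact manifold $T^n$,
\[
\Hom_{\Sh(T^n)}(\cF, L) \cong \Hom(\cF \otimes L^\vee, \omega_{T^n})[-n] \cong R\Gamma_c(T^n, \cF \otimes L^\vee)^\vee[-n].
\]
Because $L^\vee$ is a local system, $\cF \otimes L^\vee$ lies in $\Sh^b(T^n)^\heartsuit$ and is supported on the $d$-dimensional set $\supp(\cF)$. The classical cohomological dimension bound forces $R\Gamma_c(T^n, \cF \otimes L^\vee)$ to be concentrated in cohomological degrees $[0,d]$; dualizing and shifting by $[-n]$ puts the right-hand side of the display in degrees $[n-d, n]$. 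Substituting back, $\Ext^i_{R_n}(M,N) = 0$ for $i \notin [-d, 0]$, so $\pd(M) \le d$.

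The main obstacle is the cohomological-shift bookkeeping: once one verifies that $\Hom(\pi_!\omega_{\bR^n}, -)$ shifts hearts by exactly $n$ and recognizes sheaf $\Ext$ as a shifted dual of compactly supported cohomology, the conclusion is immediate from the cohomological dimension of a $d$-dimensional subanalytic set. The single-degree hypothesis on $\cF$ is used precisely to keep $\cF \otimes L^\vee$ in the heart so that this dimension bound applies.
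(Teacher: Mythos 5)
Your proof is correct, and it takes a genuinely different route from the paper's. The paper filters $\cF$ by codimension of support with respect to a triangulation: the associated graded piece in dimension $k$ is a direct sum of extensions-by-zero of constant sheaves on open cells, whose mirror is a shifted free module $R_n^{m_k}[-k]$; the bound on $\pd(M)$ then follows from the induced filtration of $M$, the resulting triangles, and a downward induction. You instead rewrite $\Ext^i_{R_n}(M,N)$ as a sheaf $\Hom$ on $T^n$ via the adjunction $W \dashv \incl$ and the monoidal mirror equivalence, apply Verdier duality on the compact manifold $T^n$ to express this as the shifted linear dual of $R\Gamma_c(T^n, \cF \otimes L^\vee)$, and invoke the cohomological-dimension bound for a sheaf in the heart supported on a $d$-dimensional subanalytic set. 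Your approach is shorter and more conceptual, trading the explicit stratification argument for Verdier duality plus a classical dimension bound; the paper's approach is more constructive in that it implicitly produces a length-$\leq d$ filtration of $M$ by free modules (a precursor to an explicit resolution), and it avoids your reduction to finitely generated $N$ since its induction works for all $N \in \Mod_{R_n}^\heartsuit$ directly. One small point you assert but don't fully justify is that the mirror equivalence identifies the heart of $\Mod_{R_n}$ with $n$-shifted ordinary local systems; this can be checked directly exactly as you compute (for $N$ finitely generated, the local system $L$ with monodromy $N$ satisfies $\Hom(\pi_!\omega_{\bR^n}, L[n]) \cong N$, and equivalences are conservative), so it is not a gap, but spelling it out would tighten the argument.
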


\begin{proof}
Since projective dimension is invariant under shifts, we may assume $\cF$ is concentrated in degree zero. Choose a triangulation $S$ of $T^n$ such that $\cF$ is $S$-constructible. Let $A$ be an index set for the strata of~$S$. Given a stratum $T^n_\al$, we write $i_\al$ for its inclusion and $d(\al)$ for its dimension. Given $0 \leq k \leq n$, we write $i_k: S_k \to T^n$ (resp. $j_k: U_k \to T^n$) for the inclusion of the union of all strata of dimension $k$ (resp. dimension at least $k$). 

Consider the decreasing filtration of $\cF$ by the sheaves $\cF_{\geq k} := j_{k!}j_k^!(\cF)$. We claim the associated graded sheaf $\cF_{k} := \cF_{\geq k}/ \cF_{> k}$ is of the form $\oplus_{d(\al) = k} i_{\al!} \bC_{T^n_\al}^{m(\al)}$ for some $m: A \to \bN$. To see this, let $i': S_k \to U_k$, $j': U_{k+1} \to U_k$ denote the inclusions, and consider the triangle
$$ j'_! j'^! j_k^!(\cF) \to j_k^!(\cF) \to i'_* i'^* j_k^!(\cF) $$
in $\Sh^b(U_k)$. Since $i'$ is closed and $j_k$ is open the last term is isomorphic to $i'_! i'^* j_k^*(\cF)$. Since $j_{k+1} = j_k \circ j'$ and $i_{k} = j_k \circ i'$, applying $j_{k!}$ then yields a triangle
$$ j_{k+1!}j_{k+1}^!(\cF) \to j_{k!}j_k^!(\cF) \to i_{k!}i_k^*(\cF). $$ 
But since $\cF$ is $S$-constructible and in degree zero, and since the $T^n_\al$ are contractible, $i_{k!}i_k^*(\cF)$ is of the stated form for some $m: A \to \bN$.  

Now consider the associated filtration of $M$ with terms $M_{\geq k} := \Hom(\pi_! \omega_{\bR^n},W \cF_{\geq k})$ and associated graded objects $M_k := M_{\geq k}/M_{> k}$. We claim that $M_k \cong R_n^{m_k}[\shortminus k]$, where $m_k = \sum_{d(\al) = k} m(\al)$. To see this, note that since each $T^n_\al$ is contractible and $\pi$ is a covering map we have $i_\al = \pi \circ i'_\al$ for some lift $i'_\al: T^n_\al \to \bR^n$. We then have
$$ W i_{\al!} \bC_{T^n_\al} 
\cong W \pi_! i'_{\al!} \omega_{T^n_\al}[\shortminus d(\al)]
\cong \pi_! W i'_{\al!} \omega_{T^n_\al}[\shortminus d(\al)]
\cong \pi_! \omega_{\bR^n} [\shortminus d(\al)], $$
where the second isomorphism follows from Lemma \ref{lem: compatibility-pushforward-local-system} and the third from Lemma \ref{lem: wrapping-of-contractbles1}, hence we have $M_k \cong R_n^{m_k}[\shortminus k]$. 

Suppose now that $\dim \supp(\cF) = m$, so that for $k > m$ we have $\cF_k \cong 0$ and thus $m_k = 0$. To show $\pd(M) \leq m$ it suffices to show $\Ext^i(M, N) \cong 0$ unless $\shortminus m \leq i \leq 0$. Certainly this is true if we replace $M$ by $M_0 \cong R_n^{m_0}$. But by the previous paragraph, for any $k$ we have a triangle
$$ M_{> k} \to M_{\geq k} \to R_n^{m_k}[\shortminus k].$$ 
By induction and the long exact sequence of $\Ext$ groups, it then follows that $\pd(M_{\geq k}) \leq m$ for any $0 \leq k \leq m$, and the claim follows since $M_{\geq 0} \cong M$. 
\end{proof}

\bibliographystyle{amsalpha}
\bibliography{bibhigherbipartite}

\end{document}